\def\az{\alpha}
\def\ez{\epsilon}
\def\bint{{\ifinner\rlap{\bf\kern.35em--}
\int\else\rlap{\bf\kern.45em--}\int\fi}\ignorespaces}
\def\bbint{{\ifinner\rlap{\bf\kern.35em--}
\hspace{0.078cm}\int\else\rlap{\bf\kern.45em--}\int\fi}\ignorespaces}
\newcommand{\R}{\mathbb{R}}
\newtheorem{thm}{Theorem}[section]
\newtheorem{lem}[thm]{Lemma}
\newtheorem{prop}[thm]{Proposition}
\newtheorem{cor}[thm]{Corollary}
\numberwithin{equation}{section}
\theoremstyle{remark}
\newtheorem{rem}[thm]{Remark}
\def\bint{{\ifinner\rlap{\bf\kern.35em--}
\int\else\rlap{\bf\kern.45em--}\int\fi}\ignorespaces}
\newcommand{%
	
	\import{./}{.pdf_tex}
}[1]{%
	
	\import{./}{#1.pdf_tex}
}
\title{Sharp gradient stability for the Sobolev inequality}
\author{Alessio Figalli, Yi Ru-Ya Zhang}
\date{\today}
\address{ETH Z\"urich, Department of Mathematics, R\"amistrasse 101, 8092, Z\"urich, Switzerland}
\email{alessio.figalli@math.ethz.ch}  
\email{yizhang3@ethz.ch}
\subjclass[2000]{46E35, 26D10}
\keywords{Sobolev inequality, stability}
\begin{document}

\begin{abstract}
We prove a sharp quantitative version of the
$p$-Sobolev inequality for any $1<p<n$, with a control on the strongest possible 
distance from the class of optimal functions. Surprisingly, the sharp exponent is constant for $p \leq 2$, while it depends on $p$ for $p>2$.
 \end{abstract}

\maketitle

\section{Introduction}

Motivated by important applications to problems in the calculus of variations and evolution PDEs, in recent years there has been a growing 
interest around the understanding of quantitative stability for functional/geometric inequalities, see for instance \cite{BE1991,BWW2003, C2006,FMP2007,FMP2008, FMP2009,CFMP2009, FMP2010, GW2010, FM2011, CL2012,CF2013,CFW2013, CS2013, DT2013,FM2013,FMP2013, R2014, F2015,C2017,DZ2017,FJ2017A, FJ2017C,FMM2018, FZ2019, HST20191, HST20192, FN2019, N2019, N2020},
as well as the survey papers \cite{F2013, F2015, FJ2017C}.  Following this line of research, in this paper we shall investigate the stability of minimizers to the classical Sobolev inequality.

\subsection{The Sobolev inequality}
The question of quantitative stability for the Sobolev inequality was first raised by Brezis and Lieb \cite{BL1985}.
Before describing the problem and the state of the art, we first introduce some useful definitions.

Given $n\ge 2$ and $1<p<n$, denote by $\dot W^{1,p}(\mathbb R^n)$ the closure of  $C^\infty_c(\R^n)$ with respect to the norm
$$\|u\|_{\dot{W}^{1,p}(\mathbb R^n)}=\left(\int_{\mathbb R^n} |Du|^p \, dx\right)^{\frac 1 p}.$$
The Sobolev inequality guarantees the existence of a positive constant $S=S(n,p)$ such that
$$ {\|Du\|_{L^p(\mathbb R^n)}}\ge S{\|u\|_{L^{p^*}(\mathbb R^n)}},$$
where $p^*=\frac {np} {n-p}. $
We call the largest constant $S$ satisfying this property the {\it optimal Sobolev constant}. 

Let $\mathcal M$ be the $(n+2)$-dimensional manifold of all functions of the form
$$v_{a,b,x_0}(x):=\frac {a}{\left(1+b|x-x_0|^{\frac{p}{p-1}}\right)^{\frac{n-p}{p}}},\qquad a\in \R\setminus \{0\},\,b>0,\,x_0\in \mathbb R^n.$$
As shown in  \cite{A1976, T1976,CNV2004}, the set $\mathcal M$ corresponds to the set of all weak solutions to 
\begin{equation}\label{S equ}
-\Delta_{p}v=S^p \|v\|^{p-p^*}_{L^{p^*}(\mathbb R^n)} |v|^{p^*-2}v,
\end{equation}
where $S$ is the optimal Sobolev constant and 
$$-\Delta_p v= {\rm div}(|Dv|^{p-2}Dv).$$
It is also proven that $\mathcal M$ coincides with the set of all the extremal functions in the Sobolev inequality; in particular,
$$ {\|Dv\|_{L^p(\mathbb R^n)}}= S{\|v\|_{L^{p^*}(\mathbb R^n)}}\qquad \forall\,v \in \mathcal M.$$

\subsection{The stability question: the generalized Brezis-Lieb's problem}
To formulate our stability problem, we introduce the notion of {\it $p$-Sobolev deficit}:
\begin{equation}
\label{eq:deficit}
\delta(u):=\frac{\|Du\|_{L^p(\mathbb R^n)}}{\|u\|_{L^{p^*}(\mathbb R^n)}}- S \qquad \forall\,u \in \dot W^{1,p}(\R^n).
\end{equation}
Note that $\delta \geq 0$, and it vanishes only on $\mathcal M$.

\smallskip

In \cite{BL1985},
Brezis and Lieb asked whether, for $p=2$, the deficit can be estimated from below by some appropriate distance between $u$ and $\mathcal M$, together with a suitable decay.
This problem was settled few years later by  Bianchi and Egnell \cite{BE1991}: they showed the existence of a constant $c=c(n)>0$ such that
$$
\delta(u)\geq c\inf_{v\in \mathcal M}\biggl(\frac{\|Du-Dv\|_{L^{2}(\mathbb R^n)} }{\| Du\|_{L^{2}(\mathbb R^n)}}\biggr)^{2} \qquad \forall\,u \in \dot W^{1,2}(\R^n),
$$
which is optimal both in terms of the strength of the distance from $\mathcal M$, and in terms of the exponent $2$ appearing in the right hand side.

After this work, it became immediately of interest understanding whether Brezis-Lieb's question could be solved also for general values of $p$.
Unfortunately, Bianchi-Egnell's method heavily depended on the Hilbert structure of $\dot {W}^{1,\,2}(\R^n)$, so new ideas and techniques were needed.

\smallskip

Almost 20 years later, in \cite{CFMP2009}, Cianchi, Fusco, Maggi, and Pratelli proved a stability version for every $p \in (1,n)$ with distance given by
	\begin{equation}
	\label{aim weak}\inf_{v\in \mathcal M}\biggl(\frac{\|u-v\|_{L^{p^*}(\mathbb R^n)} }{\|u\|_{L^{p^*}(\mathbb R^n)}}\biggr)^{\alpha}  \qquad \forall\,u \in \dot W^{1,p}(\R^n),
	\end{equation}
	together with the explicit decay exponent $\alpha=\alpha_{\scaleto{CFMP}{3.5pt}}:=\left[p^*\left(3+4p-\frac {3p+1}{n}\right)\right]^2$. Although most likely the result was not sharp, this was the first stability result valid for the full range of $p$.
In addition, their proof introduced in this problem a beautiful combination of techniques coming from symmetrization theory and  optimal transport.

These technique were further developed by Figalli, Maggi, and Pratelli in \cite{FMP2013} to provide a sharp stability result ---both in terms of the notion of distance and of the decay exponent--- in the special case $p=1$ (for this case, see also the earlier results \cite{C2006, F2015, FMP2007}).

Still, until few years ago, it remained a major open problem whether the $p$-Sobolev deficit could control the closeness to $\mathcal M$ at the level of the gradients (i.e., the strongest distance that one may hope to control with $\delta(u)$), as in the case of Bianchi and Egnell.

A first answer to this question was given by Figalli and Neumayer in \cite{FN2019} in the case $p>2$,
where they developed in $\dot W^{1,p}(\R^n)$ a suitable analogue of the strategy in \cite{BE1991} to prove the existence of a constant $c=c(n,p)>0$ such that
	\begin{equation}
	\label{aim}
	\delta(u)\ge c\inf_{v\in \mathcal M}\biggl(\frac{\|D(u-v)\|_{L^p(\mathbb R^n)} }{\|Du\|_{L^{p}(\mathbb R^n)}}\biggr)^{\az} \qquad \forall\,u \in \dot W^{1,p}(\R^n)
	\end{equation}
	where $\az = p\alpha_{\scaleto{\scaleto{CFMP}{3.5pt}}{3.5pt}}$, with $\alpha_{\scaleto{CFMP}{3.5pt}}$ as above. The appearance of the exponent $\alpha_{\scaleto{\scaleto{CFMP}{3.5pt}}{3.5pt}}$ comes from the fact that, in one of the steps in the proof, the authors need to rely on the result in \cite{CFMP2009}.
	
	Very recently, in \cite{N2020}, Neumayer extended \eqref{aim} to the full range $1<p<n$. While her proof is much simpler than the one in \cite{FN2019}, it relies heavily on the result in \cite{CFMP2009} and her strategy cannot give the sharp exponent in \eqref{aim}, even if one could prove \eqref{aim weak} with a sharp exponent.
	In particular, her approach provides the same exponent as the one in \cite{FN2019} when $p>2$, while it gives \eqref{aim} with $\az = \frac{p}{p-1}\alpha_{\scaleto{CFMP}{3.5pt}}$ when $p \in (1,2)$.

Despite all these developments, the stability exponent appearing in all these previous results was far from optimal.
The aim of this paper is to give a final answer to this problem by proving \eqref{aim} for all $1<p<n$ with sharp exponent.
	
	Here is our theorem:
	\begin{thm}\label{main thm}
	Let $1<p<n$, and define $\delta(\,\cdot\,)$ as in \eqref{eq:deficit}.
	There exists a constant $c=c(n,p)>0$ such that \eqref{aim} holds  with $\az=\max\{2,\,p\}$.
\end{thm}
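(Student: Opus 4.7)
The strategy combines three ingredients: (i) a reduction to closeness via the non-sharp stability of Neumayer \cite{N2020}; (ii) a second-order Taylor expansion of $\delta$ at a Talenti bubble $V$, whose first-order terms cancel by the Euler--Lagrange equation \eqref{S equ}; and (iii) a spectral-gap estimate for the resulting quadratic form on the complement of $T_V\mathcal M$, combined with a sharp pointwise inequality that yields the exponent $\max\{2,p\}$.

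For the reduction: Neumayer's theorem gives $\delta(u)\gtrsim(\|D\varphi\|_{L^p}/\|Du\|_{L^p})^{\alpha_N}$ for some $\alpha_N\ge\max\{2,p\}$, where $\varphi=u-V^*$ and $V^*\in\mathcal M$ is a nearest bubble in the gradient-$L^p$ distance. When $\|D\varphi\|_{L^p}\ge\epsilon_0\|Du\|_{L^p}$ for a universal $\epsilon_0$, the desired estimate is immediate; we are reduced to the \emph{close regime} $\|D\varphi\|_{L^p}\le\epsilon_0\|Du\|_{L^p}$. Translation, dilation, and scalar invariance of $\delta$ then allow us to fix $V=v_{1,1,0}$ with $\|V\|_{L^{p^*}}=1$, and the projection onto $\mathcal M$ provides a suitable (nonlinear) orthogonality of $\varphi$ to $T_V\mathcal M$.

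For the expansion and coercivity: starting from the pointwise inequality
$$|a+b|^p \;\ge\; |a|^p + p|a|^{p-2}a\cdot b + c_p(|a|+|b|)^{p-2}|b|^2, \qquad p>1,$$
and its $p^*$-analogue, we expand $\|Du\|_{L^p}^p$ and $\|u\|_{L^{p^*}}^{p^*}$ around $V$. Combined with the Taylor expansions of $t\mapsto t^{1/p}$ and $t\mapsto t^{1/p^*}$, the linear-in-$\varphi$ terms cancel via \eqref{S equ}, yielding
$$\delta(u) \;\gtrsim\; \mathcal Q_V[\varphi] - \mathcal B_V[\varphi] - (\text{higher-order error}),$$
where $\mathcal Q_V[\varphi] := \int_{\mathbb R^n} (|DV|+|D\varphi|)^{p-2}|D\varphi|^2\,dx$ and $\mathcal B_V[\varphi]$ is a bilinear correction involving $\int V^{p^*-2}\varphi^2$ and $(\int V^{p^*-1}\varphi)^2$. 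A Bianchi--Egnell-type spectral gap for the linearization of \eqref{S equ} at $V$ then gives $\mathcal Q_V[\varphi] - \mathcal B_V[\varphi]\gtrsim\mathcal Q_V[\varphi]$ on the orthogonal complement of $T_V\mathcal M$. The sharp exponent now comes from purely pointwise estimates: for $p\ge 2$, separating $\{|D\varphi|\le|DV|\}$ from its complement shows $(|DV|+|D\varphi|)^{p-2}|D\varphi|^2\ge c_p|D\varphi|^p$, so $\mathcal Q_V[\varphi]\gtrsim\|D\varphi\|_{L^p}^p$ and the exponent is $p$; for $1<p<2$, H\"older's inequality gives $\|D\varphi\|_{L^p}^2\lesssim\mathcal Q_V[\varphi]\cdot\|D(V+\varphi)\|_{L^p}^{2-p}$, so the exponent is $2$. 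The higher-order error is absorbed using smallness of $\epsilon_0$.

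The principal obstacle is the spectral-gap estimate for $p\ne 2$. For $p=2$ this is Bianchi--Egnell's classical spectral decomposition, but for $p\ne 2$ the linearization of \eqref{S equ} at $V$ is a weighted second-order elliptic operator with weight $|DV|^{p-2}$---singular at the maximum of $V$ and degenerate at infinity---so standard Hilbert-space spectral theory does not apply. Establishing the gap is expected to require a spherical-harmonic decomposition exploiting the radial symmetry of $V$, weighted Hardy--Poincar\'e inequalities in the radial and angular sectors, and a concentration-compactness argument to preclude loss of compactness along putative minimizing sequences. Only by preserving the sharp constants throughout every step does one obtain the exponent $\max\{2,p\}$; this is precisely where the strategies of \cite{FN2019,N2020} lose sharpness, since they rely on \cite{CFMP2009} as a black box.
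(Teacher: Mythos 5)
Your reduction to the close regime (via \cite{N2020} instead of concentration--compactness) and your H\"older argument giving the exponent $2$ for $p<2$ are both fine and essentially match the paper. However, there are two genuine gaps at the heart of the argument. First, the orthogonality: you take $V^*$ minimizing the gradient-$L^p$ distance and assert that this projection ``provides a suitable (nonlinear) orthogonality of $\varphi$ to $T_V\mathcal M$.'' For $p\neq2$ it does not: the Euler--Lagrange equation of $v\mapsto\|Du-Dv\|_{L^p}$ yields $\int_{\R^n}|D(u-V)|^{p-2}D(u-V)\cdot D\xi\,dx=0$ for $\xi\in T_V\mathcal M$, whereas the spectral gap you invoke requires orthogonality in the weighted space $L^2(\R^n;V^{p^*-2})$, i.e.\ $\int_{\R^n}V^{p^*-2}\xi\,(u-V)\,dx=0$. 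The paper explicitly notes that the nearest bubble does not give the needed orthogonality and introduces a different selection principle, minimizing $\mathcal F_u[v]=\frac1{p^*}\int|v|^{p^*}-\frac1{p^*-1}\int|v|^{p^*-2}v\,u$ (Lemma~\ref{lem:orthogonal}), precisely to produce it.

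Second, the coercivity step ``$\mathcal Q_V[\varphi]-\mathcal B_V[\varphi]\gtrsim\mathcal Q_V[\varphi]$'' does not follow from a Bianchi--Egnell-type gap once you expand with a pointwise inequality carrying a generic constant $c_p$. After the first-order cancellation, $\mathcal B_V$ comes with the \emph{sharp} coefficient (essentially $\frac{p(p^*-1)}2 S^p\int V^{p^*-2}\varphi^2$), while the spectral gap (Proposition~\ref{spectrum gap}) only yields the margin $(p^*-1)S^p+2\lambda$ with $\lambda$ possibly small; replacing the Hessian-type form, whose natural coefficient is $\frac p2$, by $c_p(|DV|+|D\varphi|)^{p-2}|D\varphi|^2$ with a non-sharp $c_p$ destroys this margin (test on the first eigenfunction orthogonal to $T_V\mathcal M$), so the inequality can fail even for orthogonal $\varphi$. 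This is exactly why the paper proves the almost-sharp vector inequality of Lemma~\ref{vector inequ}, with constant $\frac{1-\kappa}2$ in front of the exact weighted Hessian expression plus a \emph{separately small} coercive remainder $c_0\min\{|y|^p,|x|^{p-2}|y|^2\}$, and why it then needs the new, $\ez$-dependent spectral-gap statement (Proposition~\ref{new spectrum gap}), proved via the delicate Orlicz-type compactness of Lemma~\ref{weighted compact} and Corollary~\ref{cor:weighted poincare}. Your proposal leaves precisely this step as ``expected to require'' spherical harmonics and weighted Hardy--Poincar\'e inequalities, but those tools only produce the linearized gap (already available from \cite{FN2019} plus the compact embedding); the real difficulty is relating the $\varphi$-dependent quadratic forms to the linearized ones. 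Finally, for $1<p\le\frac{2n}{n+2}$ (i.e.\ $p^*\le2$) the quantity $\int V^{p^*-2}\varphi^2$ is not controlled by $\|\varphi\|_{L^{p^*}}$ and the second-order expansion of the $L^{p^*}$-norm in the form you state does not exist; the paper must replace it by the Orlicz-type expression $\int\frac{(V+C_1|\ez\varphi|)^{p^*}}{V^2+|\ez\varphi|^2}|\varphi|^2$, and your scheme does not address this regime at all.
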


\begin{rem} The decay exponent $\az=\max\{2,\,p\}$ is sharp, as we now explain.

Fix $v=v_{1,1,0} \in \mathcal M$ and consider first $u_i:=v(A_ix)$, where $A_i \in \R^{n\times n}$ denotes the diagonal matrix 
$$A_i={\rm diag}\left(1, \dots, 1, 1+\frac 1 i\right).$$ 
It is not difficult to check that $\delta(u_i)$ behaves as $i^{-2}$, while the right hand side of \eqref{aim} behaves as $i^{-\alpha},$ 
hence \eqref{aim} cannot hold with $\az<2$. 

On the other hand, fix $\varphi \in C_c^\infty(B_1)$ a nontrivial function, and consider now
 $\tilde u_i:=v+  \varphi(x_i+\cdot)$,  where
$x_i\in \mathbb R^n$ is a sequence of points converging towards $\infty$.
One can check that
$$\|D\tilde u_i\|^p_{L^p(\mathbb R^n)}= \|Dv\|^p_{L^p(\mathbb R^n)} +\|D\varphi\|^p_{L^p(\mathbb R^n)}+r_{i,1}$$
and
$$\|\tilde u_i\|^{p^*}_{L^{p^*}(\mathbb R^n)}= \|v\|^{p^*}_{L^{p^*}(\mathbb R^n)} +\|\varphi\|^{p^*}_{L^{p^*}(\mathbb R^n)}+r_{i,2},$$
with $|r_{i,1}|+|r_{i,2}|\leq C\bigl(v(x_i)+|Dv(x_i)|\bigr)\leq Cv(x_i) \to 0$ as $i\to \infty$.
Hence, choosing a sequence $\ez_i\to 0$ such that $v(x_i)\ll \ez_i\ll 1$, the functions $\hat u_i:=v+ \ez_i \varphi(x_i+\cdot)$ satisfy
$$\|D\hat u_i\|^p_{L^p(\mathbb R^n)}= \|Dv\|^p_{L^p(\mathbb R^n)} +\ez_i^p\|D\varphi\|^p_{L^p(\mathbb R^n)}+o(\ez_i^p)$$
and 
$$\|\hat u_i\|^{p^*}_{L^{p^*}(\mathbb R^n)}= \|v\|^{p^*}_{L^{p^*}(\mathbb R^n)} +\ez_i^{p^*}\|\varphi\|^{p^*}_{L^{p^*}(\mathbb R^n)}+o(\ez_i^{p^*}).$$
Thanks to these facts, one easily deduces that $\delta(\hat u_i)$ behaves as $\ez_i^{p}$, while the right hand side of \eqref{aim} behaves as $\ez_i^{\alpha}$. Thus \eqref{aim} cannot hold with $\az<p$.
\end{rem}

\subsection{Strategy of the proof}
\label{sect:idea}
As in \cite{FN2019}, the starting idea
comes from \cite{BE1991}.

More precisely, given $u$ close to $\mathcal M$, one chooses  $v \in \mathcal M$ close to $u$ and set $\varphi:=\frac{u-v}{\|\nabla u-\nabla v\|_{L^p(\R^n)}}$ and $\ez:=\|\nabla u-\nabla v\|_{L^p(\R^n)}$,
so that $u$ can be written as $v+\ez \varphi$.
Then one expands $\delta(u)$ in $\epsilon$, and one aims to use it to control $\nabla \varphi$ in $L^p$.

\smallskip

When $p=2$, as shown in \cite{BE1991}, the expansion of $\delta(u)$ gives 
$$
\delta(v+\ez \varphi)=\ez^2Q_v[\varphi]+o\bigl(\epsilon^2\|D\varphi\|_{L^2(\R^n)}^2\bigr), 
$$
where $Q_v[\,\cdot\,]$ is a quadratic form depending on $v$. In addition, if $\varphi$ is orthogonal to $T_v\mathcal M$ in the weighted  space $L^2(\R^n;v^{2^*-2})$, spectral analysis shows that $Q_v[\varphi]$ controls $\|D\varphi\|^2_{L^2}$  from above,
thus
$$
\delta(v+\ez \varphi)\geq c\ez^2\|D\varphi\|_{L^2(\R^n)}^2+o\bigl(\epsilon^2\|D\varphi\|_{L^2(\R^n)}^2\bigr).
$$
Hence the result follows for $\ez \ll 1$, provided orthogonality can be ensured. In the case $p=2$, this can be easily guaranteed by choosing $v$ which minimizes $$
\mathcal M\ni v\mapsto \|\nabla u-\nabla v\|_{L^2(\R^n)},
$$ 
completing the proof.

\smallskip

For $p>2$, in \cite{FN2019} the authors tried to mimic the strategy of \cite{BE1991}. More precisely,  the expansion of $\delta(u)$ gives
$$
\delta(v+\ez \varphi)=\ez^2Q_v[\varphi]+o\bigl(\epsilon^2\|D\varphi\|_{L^p(\R^n)}^2\bigr), 
$$
where $Q_v[\,\cdot\,]$ is a quadratic form depending on $v$ and $p$.
Again, if $\varphi$ is orthogonal to $T_v\mathcal M$ in the weighted  space $L^2(\R^n;v^{p^*-2})$, spectral analysis shows that $Q_v[\varphi]$ controls the weighted norm $\|D\varphi\|^2_{L^2(\R^n;|Dv|^{p-2})}$ from above,
thus
$$
\delta(v+\ez \varphi)\geq c\ez^2\|D\varphi\|_{L^2(\R^n;|Dv|^{p-2})}+o\bigl(\epsilon^2\|D\varphi\|_{L^p(\R^n)}^2\bigr). 
$$
Unfortunately, now this argument is not sufficient, since for $p>2$ the $L^p$ norm of $D\varphi$ may not be controllable by its weighted $L^2$ norm. 
Furthermore, finding the correct orthogonality condition in this non-Hilbertian context requires a series of new ideas. All this creates a series of challenges that were overcome in \cite{FN2019} by relying also on the $L^{p^*}$ stability result  of \cite{CFMP2009}, as explained in detail in \cite[Section~2]{FN2019}.

\smallskip

In this paper, to handle the general case $1<p<n$ and prove a stability estimate with sharp exponent, we need to face several new difficulties.
The idea is again to expand the deficit $\delta(v+\ez \varphi)$. However, the argument in \cite{FN2019} shows that, for $p\neq 2$, a standard Taylor expansion creates error terms that cannot be controlled. Even worse, a second order expansion of the deficit naturally leads to a quadratic form consisting of a weighted $\dot W^{1,2}$  and a weighted $L^{2}$ norm. However,  when $p<2$, the $\dot W^{1,p}$ norm is weaker than any weighted $\dot W^{1,2}$ norm, so we cannot expand the deficit at order $2$ (this was the main reason why \cite{FN2019} could only deal with the case $p\geq 2$). In addition, when $p\leq \frac{2n}{n+2}$ (equivalently $p^*\leq 2$), the $L^{p^*}$ norm is not sufficient to control any weighted $L^2$ norms, and this creates even further challenges.
For all these reasons, our arguments are different in the three regimes $p \in (1,\frac{2n}{n+2}]$, $p \in (\frac{2n}{n+2},2)$, and $p \in [2,n).$

To briefly explain the main ideas in the proof, let us focus on the case $p \in (1,\frac{2n}{n+2}]$ (note that this set is nonempty only for $n\geq 3$).
As mentioned above, a first problem consists in understanding  how to expand the deficit. With no loss of generality, we can assume that $v>0$.

Our first new tool is provided by the following inequalities: for any $\kappa>0$ there exists $C_1>0$ such that, for $\ez$  sufficiently small,
\begin{multline*}
\|Dv+\ez D\varphi\|^p_{L^p(\mathbb R^n)}\ge  \int_{\mathbb R^n} |Dv|^p\,dx+\ez p \int_{\mathbb R^n} |Dv|^{p-2}Dv\cdot D\varphi \,dx  \\
  +    \frac{\ez^2p(1-\kappa)} 2  \biggl(\int_{\mathbb R^n} |Dv|^{p-2}|D\varphi|^2 + (p-2) |w|^{p-2}\biggl(\frac{|Du|-|Dv|}{\ez}\biggr)^2 \,dx \biggr)
\end{multline*}
and
\begin{multline*}
\|v+\ez\varphi\|^p_{L^{p^*}(\mathbb R^n)}\le  \|v\|^p_{L^{p^*}(\mathbb R^n)} \\
+ \|v\|_{L^{p^*}(\mathbb R^n)}^{p-p^*}\left(\ez p \int_{\mathbb R^n} v^{p^*-1}\varphi\,dx +\ez^2\left(\frac{ p(p^*-1)} 2 +\frac{p \kappa}{p^*}\right)\int_{\mathbb R^n}\frac {(v  +C_1|\ez\varphi|)^{p^*}}{v^2+|\ez \varphi|^2}|\varphi|^2\,dx\right),
\end{multline*}
where $w=w(Dv,Du)$ is obtained by taking a suitable combination of $Dv$ and $Du$ (depending on their respective sizes) as in Lemma \ref{vector inequ}.

Combining these inequalities and using \eqref{S equ}, one gets
\begin{multline}
\label{eq:expand delta 1}
C(n,p) \delta(u) \geq  \frac{\ez^2 p(1-\kappa) } 2  \biggl(\int_{\mathbb R^n} |Dv|^{p-2}|D\varphi|^2 + (p-2) |w|^{p-2}\biggl(\frac{|Du|-|Dv|}{\ez}\biggr)^2 \,dx \biggr)  \\
 - \ez^2 \|v\|_{L^{p^*}(\mathbb R^n)}^{p-p^*} S^p\left(\frac{p (p^*-1)} 2 +\frac{p \kappa}{p^*}\right)\int_{\mathbb R^n}\frac {(v  +C_1|\ez\varphi|)^{p^*}}{v^2+|\ez\varphi|^2}|\varphi|^2\,dx,
\end{multline}
so the result would be proved if we could show that, under some suitable orthogonality relation between $v$ and $\varphi$, the right hand side above controls $\|\ez D\varphi\|_{L^p(\R^n)}^{\max\{2,p\}}$ for $\ez \ll 1$.
Unfortunately this is false for $p<2$, since 
$$
\ez^2|Dv|^{p-2}| D\varphi|^2 + (p-2) |w|^{p-2}\bigl({|Du|-|Dv|}\bigr)^2\sim \ez |Dv|^{p-1}| D\varphi| \qquad \text{for }|Dv|\leq \ez|D\varphi|
$$
(cp. \eqref{eq:lower G}), and in general this weighted $W^{1,1}$ norm of $\varphi$ is not enough to control the last term in~\eqref{eq:expand delta 1}.

Hence, our second goal consists in showing that we can improve the expansion  of $\|Dv+\ez D\varphi\|^p_{L^p(\mathbb R^n)}$
(see Lemma \ref{vector inequ}), so that we can add the extra term 
$c_0 \int_{\mathbb R^n}  \min\bigl\{ \ez^p|D\varphi|^p,\,\ez^2|Dv|^{p-2}  |D\varphi|^2\bigr\} \,dx$ to the  right hand side of \eqref{eq:expand delta 1}.
With this extra term at our disposal, we now want to use the right hand side of \eqref{eq:expand delta 1} to control $\|\ez D\varphi\|_{L^p(\R^n)}^{\max\{2,p\}}$.

The main idea behind the proof of this fact consists of two steps:\\ (1) show that the result is true if one replaces the two integrands in the right hand side of \eqref{eq:expand delta 1} by their limit as $\ez \to 0$;\\
(2) show by compactness that the result holds also for $\ez$ sufficiently small.\\
Thanks to the spectral analysis performed in \cite{FN2019}, Step (1) is rather easy, as it boils down to proving a compact embedding (see Propositions~\ref{compact embedding} and \ref{spectrum gap}).
On the other hand, Step (2) turns out to be extremely delicate. A key difficulty comes from the fact that the integrand appearing in the last term of \eqref{eq:expand delta 1} behaves 
like $v^{p^*-2}|\varphi|^2$ when $|\varphi|\ll \frac{v}{\ez}$, and like $\ez^{p^*-2}|\varphi|^{p^*}$ otherwise.
Analogously, the first integrand  behaves like $|Dv|^{p-2}|D\varphi|^2$ when $|D\varphi|\ll \frac{|Dv|}{\ez}$, and like $\ez^{p-2}|D\varphi|^{p}$ otherwise.
These substantial changes of behavior, and the fact that a change in size of the gradients does not necessarily correspond to a change in size of the functions, make the proofs of several results (in particular the ones of Lemma~\ref{weighted compact} 
and Proposition~\ref{new spectrum gap}) very involved. 

Finally, once all these difficulties have been solved, in Section~\ref{sect:pf thm} we introduce a new minimization principle to select $v$ so to guarantee orthogonality and conclude the proof.

\subsection{Structure of the paper}
In Section \ref{sect:vector}, we prove a series of new vectorial inequalities
that play a crucial role in the expansion of the deficit. 
In Section~\ref{spectrum gaps}, we prove the compactness and spectral gaps estimates required for the proof of Theorem~\ref{main thm}, which is then postponed to Section~\ref{sect:pf thm}.
Finally, we collect some technical estimates in two appendices.

 \medskip
  
{\noindent \bf Notation.}  In our estimates we often write 
constants as 
positive real numbers $C(\cdot)$ and $c(\cdot)$, with
the parentheses including all the parameters on which the constant depends. Usually we use $C$ to denote a constant larger than $1$, and $c$ for a constant less than $1$.
We simply write $C$ or $c$ if the constant is absolute. The constant $C(\cdot)$ may
vary between appearances, even within a chain of inequalities. 
The notation $a \sim b$ indicates that both inequalities $a\le C b$ and $b\le C a$ hold. We denote the closure of a   set  $A\subset \R^n$ by $\overline{A}$. Finally, the Euclidean ball centered at $x$ with radius $r$ is denoted by $B(x,r)$.

\medskip

{\noindent \it Acknowledgments.}
The second author would like to thank Herbert Koch for several discussions about this problem during his stay at the Hausdorff Center for Mathematics in Bonn.
Both authors are grateful to Federico Glaudo and Robin Neumayer for useful comments on a preliminary version of this manuscript.
Both authors are supported by the European Research Council under the Grant Agreement No.
721675 ``Regularity and Stability in Partial Differential Equations (RSPDE).''

\section{Sharp vector inequalities in Euclidean spaces} 
\label{sect:vector}

We start with the following sharp inequalities on vectors, which improve the ones in \cite[Section~3.2]{FN2019}.
The basic idea behind these inequalities is the following: to apply the strategy described in Section~\ref{sect:idea}, for  fixed $x \in \R^n$ we would like to find a non-negative quadratic expression in $y$ that controls $|x+y|^p-|x|^p+p|x|^{p-2}x\cdot y$ from below, and that for $|y|\ll 1$ behaves like the Hessian of $z\mapsto |z|^p$ at $x$ (this is needed in order to exploit later Proposition~\ref{spectrum gap}). Unfortunately this is impossible, so we introduce a weight $|w|=|w(x,x+y)|$ that depends on the sizes of $|x|$ and $|x+y|$ and modulates the quadratic-type expression appearing in the right hand side of our estimates. 
Analogously, in Lemma~\ref{upper bound}(i) we need to consider a weighted expression in front of $|b|^2$ in order to obtain a sufficiently precise expansion. We note that, as explained in Section~\ref{sect:idea}, the extra term (the one multiplied by  $c_0$) appearing in Lemma~\ref{vector inequ} will be crucial to prove our main theorem.

\begin{lem}\label{vector inequ}
Let $x,y \in \mathbb R^n$. Then, for any $\kappa>0$, there exists a constant $c_0=c_0(p,\kappa)>0$ such that the following holds:

\noindent(i) For $1<p<2$,
\begin{multline*}
|x+y|^p\ge |x|^p+p|x|^{p-2}x\cdot y+\frac {1-\kappa} 2\left( p |x|^{p-2} |y|^2+ p(p-2)|w|^{p-2}\bigl( |x|-|x+y| \bigr)^2\right)\\+c_0\min \left\{|y|^p,\,|x|^{p-2}|y|^2\right\},
\end{multline*}
where 
$$
w=w(x,x+y) := \left\{ \begin{array}{cl}
	 \left(\frac {|x+y|}{(2-p)|x+y|+(p-1)|x|}\right)^{\frac 1 {p-2}} x  & \textrm{if  $|x|<|x+y|$}\\
	 x & \textrm{if  $|x+y|\le |x|$}
\end{array} \right..
$$

\noindent (ii) For $p\geq 2$,\footnote{Since for $p=2$ the coefficient $p(p-2)$ vanishes, the exact definition of $w$ is irrelevant in this case.}

$$|x+y|^p\ge |x|^p+p|x|^{p-2}x\cdot y+\frac {1-\kappa} 2\left( p |x|^{p-2} |y|^2+ p(p-2)|w|^{p-2}\bigl( |x|-|x+y| \bigr)^2\right)+c_0|y|^p,$$
where
$$
w =w(x,x+y):= \left\{ \begin{array}{cl}
	x & \textrm{if  $|x|\leq |x+y|$}\\
	\left(\frac {|x+y|}{|x|}\right)^{\frac 1 {p-2}}(x+y) & \textrm{if  $|x+y|\le |x|$}
\end{array} \right..
$$
\end{lem}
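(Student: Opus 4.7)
The plan is to reduce both inequalities to a one-variable problem via scaling and rotation, verify the reduced inequality using the tailor-made form of $w$, and then insert the extra $c_0$-term by a two-regime compactness argument.

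\textbf{Reduction to the plane and to 1D.} Both sides are $p$-homogeneous under $(x,y)\mapsto(\lambda x,\lambda y)$ and invariant under the diagonal $O(n)$ action, so I may assume $|x|=1$ and work in the plane spanned by $x$ and $y$. Writing $x=e_1$ and parametrizing $x+y=(r\cos\alpha,r\sin\alpha)$ with $r\ge 0$ and $\alpha\in[0,\pi]$, one has
$$
|y|^2=r^2-2r\cos\alpha+1,\qquad x\cdot y=r\cos\alpha-1,\qquad \big||x|-|x+y|\big|=|1-r|,
$$
and in each of the four sub-cases the prescribed $|w|$ depends only on $r$. Dropping the $c_0$-term first, the \emph{base} right-hand side depends on $\alpha$ linearly in $\cos\alpha$, with net coefficient $\kappa pr\ge 0$: the $pr\cos\alpha$ contribution from $p|x|^{p-2}x\cdot y$ is partially cancelled by $-(1-\kappa)pr\cos\alpha$ coming from the quadratic piece. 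Since $r^p$ is $\alpha$-independent, the base inequality is hardest at $\alpha=0$, where $y=(r-1)e_1$, reducing it to a one-variable problem in $s:=r-1\in[-1,\infty)$.

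\textbf{Verifying the 1D base inequality.} In each sub-case, $w$ is engineered so that the reduced expression simplifies. In case (i) with $s>0$ one computes $|w|^{p-2}=\frac{1+s}{1+(2-p)s}$, and the coefficient of $s^2$ collapses algebraically to $\frac{p(p-1)}{1+(2-p)s}$. This matches $(1+s)^p=1+ps+\tfrac{p(p-1)}{2}s^2+\tfrac{p(p-1)(p-2)}{6}s^3+O(s^4)$ to second order and leaves a third-order slack $\frac{p(p-1)(2-p)}{3}s^3>0$; combined with the $\kappa$-gap, this yields the inequality for all $s>0$ via comparison with the $s^p$ asymptotics at infinity. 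In the remaining sub-cases the weight satisfies $|w|^{p-2}=1$ or $|w|^{p-2}=(1+s)^{p-1}$, and the reduced inequality becomes either the standard convexity bound $(1+s)^p\ge 1+ps+\frac{(1-\kappa)p(p-1)}{2}s^2$ on the appropriate half-line, or an elementary check at the boundary $s=-1$ where the right-hand side turns out to be non-positive.

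\textbf{Inserting the $c_0$-term.} Write $\Phi(x,y):=|x+y|^p-(\text{base RHS})\ge 0$. The base argument shows $\Phi(x,y)\gtrsim\kappa|x|^{p-2}|y|^2$ near $y=0$, so split at the threshold $|y|\le\eta|x|$ for $\eta=\eta(p,\kappa)$ small. In the small-$y$ regime, the extra term $c_0|y|^p$ (case ii) or $c_0\min\{|y|^p,|x|^{p-2}|y|^2\}$ (case i) is bounded by $c_0|x|^{p-2}|y|^2$: automatic in case (i) since $\min\le|x|^{p-2}|y|^2$, and in case (ii) by $|y|^p\le\eta^{p-2}|x|^{p-2}|y|^2\le|x|^{p-2}|y|^2$ for $p\ge 2$, $\eta\le 1$. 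This is absorbed into $\Phi$ by taking $c_0\ll\kappa$. In the complementary regime $|y|\ge\eta|x|$, normalize $|x|+|y|=1$ and invoke compactness: $\Phi$ is continuous, $p$-homogeneous, and strictly positive off $\{y=0\}$, so it has a uniform lower bound $\Phi\ge c|y|^p$ on the compact set $\{|x|+|y|=1,\,|y|\ge\eta|x|\}$; in case (i), once $|y|\ge|x|$ the minimum reduces to $|y|^p$, while the intermediate zone $\eta|x|\le|y|\le|x|$ is again handled by absorption.

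\textbf{Main obstacle.} The most delicate point is case (i): since $p(p-2)<0$, the weighted quadratic piece is negative, and the prescribed expression for $|w|^{p-2}$ is the unique one that both matches the Hessian of $|z|^p$ at $z=x$ to leading order and preserves the inequality globally. Verifying that the algebraic collapse to the clean expression $\frac{p(p-1)}{1+(2-p)s}$ occurs exactly---and that the resulting slack survives for all $s$, not merely near $0$---is the main technical calculation underlying the lemma, and the one that feeds the positivity needed downstream in Section~\ref{sect:pf thm}.
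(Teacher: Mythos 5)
Your overall architecture is reasonable — the reduction to collinear $y$ (the right-hand side is affine in $\cos\alpha$ with slope $\kappa p r\ge 0$, so the worst case is $y\parallel x$) is a clean way to phrase what the paper does implicitly by writing its base inequality purely in terms of $|x|$ and $|x+y|$, and your two-regime insertion of the $c_0$-term by compactness on a normalized set is a legitimate alternative to the paper's contradiction-by-sequences argument. But the technical heart of the lemma is exactly the global verification of the reduced one-dimensional inequalities, and in the two nontrivial sub-cases you assert it rather than prove it. For case (i) with $s>0$, a positive third-order Taylor coefficient at $s=0$ plus the correct $s^p$ growth at infinity does not exclude a sign change at intermediate $s$; the paper's Step (i)-1 proves the global bound by the explicit monotonicity computation ($h(1)=0$ and an algebraic factorization showing $h'(t)\le 0$ on $(0,1]$), and nothing in your sketch replaces it. For $p\ge 2$ and $s\in(-1,0)$, the reduced inequality $(1+s)^p\ge 1+ps+\tfrac p2 s^2+\tfrac{p(p-2)}2(1+s)^{p-1}s^2$ is not ``an elementary check at the boundary $s=-1$'': nonpositivity of the right-hand side at the endpoint says nothing about interior $s$, and here the unweighted bound $(1+s)^p\ge 1+ps+\tfrac{p(p-1)}2 s^2$ actually fails near $s=-1$ when $p>2$, so the weight $(1+s)^{p-1}$ must be exploited through a genuine monotonicity argument (the paper's $h'(t)\ge 0$ for $t=|x|/|z|\ge 1$).

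There are also two gaps in the $c_0$-insertion. First, the small-$y$ absorption uses $\Phi(x,y)\gtrsim \kappa|x|^{p-2}|y|^2$ for $|y|\le\eta|x|$, but your collinear reduction only gives $\Phi(x,y)\ge\Phi(x,y')$ with $|y'|=\bigl||x+y|-|x|\bigr|$, and for $y\perp x$ one has $|y'|\sim |y|^2/|x|\ll|y|$, so the claimed bound does not follow from ``the base argument''; you need a direct quantitative lower bound on the quadratic form $G(x,y)=p|x|^{p-2}|y|^2+p(p-2)|w|^{p-2}(|x|-|x+y|)^2$, which is precisely the paper's Step (i)-2, inequality \eqref{eq:lower G} (trivial for $p\ge2$, a short but necessary computation for $p<2$). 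Second, in case (i) your compact set $\{|x|+|y|=1,\ |y|\ge\eta|x|\}$ contains $x=0$, where the individual $|x|^{p-2}$-weighted terms of $\Phi$ blow up and only their combination stays bounded (indeed $G\to 0$ by a cancellation involving the weight $\tfrac{|x+y|}{(2-p)|x+y|+(p-1)|x|}|x|^{p-2}$); ``$\Phi$ is continuous and strictly positive'' there needs this cancellation to be proved, whereas the paper's normalization $|x_i|=1$ with $|y_i|$ possibly large faces the same point and handles it by showing the right-hand side grows only like $C(p)|y_i|$. So the skeleton is fine, but the global 1D inequalities and the positivity of $G$ — the actual content of the lemma — are missing.
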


\begin{rem}
Note that the constant $c_0$ appearing in the statement above is said to depend on $p$ and $\kappa$, but not on the dimension $n$. The reason is that, to prove the inequality, one can always restrict to the 2-dimensional plane generated by $x$ and $y$, therefore the dimension $n$ of the ambient space plays no role. 
\end{rem}

\begin{rem}
One may be tempted to define directly the weight $\tilde w:=|w|^{p-2}$ with $w$ as above, and then use $\tilde w$ in place of $|w|^{p-2}$ everywhere. However our notation has the advantage that $w\to x$ as $y \to 0$. Not only this emphasizes better the similarities with a Taylor expansion, but it will also be convenient in the proof of Proposition \ref{new spectrum gap}. 
\end{rem}
\begin{proof}
We split the proof in several steps.

\smallskip

\noindent 
$\bullet$ {\it Proof of (i): the case $1<p<2$}.
By approximation we can assume that $|x|\neq 0$.

\smallskip

\noindent 
{\it - Step (i)-1: we show that}
\begin{equation}
\label{base 1}
|x+y|^p\ge \left(1-\frac 1 2 p\right) |x|^p +\frac 1 2 p |x|^{p-2}|x+y|^2 +\frac 1 2 p(p-2)|w|^{p-2}\bigl( |x|-|x+y| \bigr)^2.
\end{equation} 
To prove this, we set  $z=x+y$ and distinguish two cases.

In the case $|z|<|x|$ we set $t:=\frac{|z|}{|x|}$. Then \eqref{base 1} is equivalent to proving that
$$h(t):=t^p - \left(1-\frac 1 2 p\right) -\frac 1 2 p  t^2 -\frac 1 2 p(p-2) (1-t)^2\ge 0,\qquad \forall\,0<t<1.$$
For this, it suffices to notice that $h(1)=h'(1)=0$, and that
$$h''(t)=p\left((p-1)t^{p-2}-1+(2-p)\right)\ge 0\qquad \forall\,0<t<1$$
as $1<p<2$. So \eqref{base 1} holds for $|z|<|x|$. 

On the other hand, in the case $|z|\geq |x|$ we set $t:=\frac{|x|}{|z|}$
and we claim that
$$h(t):=1-\left(1-\frac 1 2 p\right) t^p - \frac 1 2 p t^{p-2}  - \frac 1 2 p(p-2)\frac {1}{(2-p) +(p-1)t}t^{p-2}(t-1)^2\ge 0,\qquad\forall\, 0<t\leq 1.$$
Since $h(1)=0$ and
\begin{align*}
h'(t)=&\frac 1 2 p(p-2)\left[ t^{p-1} - t^{p-3} +2 (1-t) t^{p-2} [(2-p) +(p-1)t]^{-1}\right.\\
&\left. +(2-p)t^{p-3}(t-1)^2[(2-p) +(p-1)t]^{-1}+(p-1)t^{p-2}(t-1)^2[(2-p) +(p-1)t]^{-2}\right]\\
=&\frac 1 2 p(p-2) (t-1)t^{p-3}\left[ t+1 -\frac {2t} {(2-p) +(p-1)t} +\frac {(p-2)(1-t)}{(2-p) +(p-1)t}-\frac{(p-1)t(1-t)}{((2-p) +(p-1)t)^2}\right]\\
=& -\frac 1 2 p(2-p) \frac{ t^{p-2}} {(2-p) +(p-1)t} (p-1)  (t-1)^2 \left[  1+\frac {1}{(2-p) +(p-1)t}\right]\le 0\qquad \forall\,0\leq t\leq 1,
\end{align*} 
we deduce that $h(t)\ge h(1)=0$, concluding the proof of \eqref{base 1}. 

\smallskip
\smallskip

\noindent 
{\it - Step (i)-2: we prove that, for any $x \neq 0$, the function
$$G(x,y):=p |x|^{p-2} |y|^2+ p(p-2)|w|^{p-2}\bigl( |x|-|x+y| \bigr)^2$$
satisfies the lower bound
\begin{equation}
\label{eq:lower G}
G(x,y)\geq c(p)\frac{|x|}{|x|+|y|} |x|^{p-2}|y|^2,\qquad \text{for some $c(p)>0$.}
\end{equation}}
Indeed, when  $|x+y|<|x|$, by the triangle inequality and the fact that $1<p<2$ we get
$$
G(x,y)=p |x|^{p-2}\Bigl(|y|^2-(2-p)\bigl( |x|-|x+y| \bigr)^2\Bigr) \geq p |x|^{p-2}\Bigl(|y|^2-(2-p)|y|^2\Bigr)=p(p-1)|x|^{p-2}|y|^2,
$$
which implies \eqref{eq:lower G}.
On the other hand,  when $|x+y|\geq |x|>0$ we note that
$$|w|^{p-2}=\frac {|x+y|}{(2-p)|x+y|+(p-1)|x|}|x|^{p-2}.$$
Therefore, using again the triangle inequality,
\begin{align*}
G(x,y) &\ge p  \left(|x|^{p-2} |y|^2+  (p-2) |w|^{p-2} |y|^2\right)\\
&=p|x|^{p-2}|y|^2 \frac{(p-1)|x|}{(2-p)|x+y|+(p-1)|x|}\geq p|x|^{p-2}|y|^2 \frac{(p-1)|x|}{(2-p)|y|+|x|},
\end{align*}
and \eqref{eq:lower G} follows.

\smallskip

\noindent 
{\it - Step (i)-3: conclusion.}
As a consequence of  \eqref{eq:lower G},  we know that $G(x,y)\geq 0$ and it vanishes only if $y=0$ (by assumption $x\neq 0$).
Thanks to this fact and recalling \eqref{base 1}, we get the following:
for any $\kappa>0$ and $x \neq 0$, the inequality 
$$|x+y|^p\ge |x|^p+p|x|^{p-2}x\cdot y+\frac {1-\kappa} 2\left( p |x|^{p-2} |y|^2+ p(p-2)|w|^{p-2}\bigl( |x|-|x+y| \bigr)^2\right)$$
holds, and equality is attained  if and only if $y=0$. 

We now prove the inequality in the statement of the lemma by contradiction: If the inequality is false, 
there exist sequences $x_i$ and $y_i$ such that
\begin{multline}
|x_i+y_i|^p\le  |x_i|^p+p|x_i|^{p-2}x_i\cdot y_i+\frac {1-\kappa} 2\left( p |x_i|^{p-2} |y_i|^2+ p(p-2)|w_i|^{p-2}\bigl( |x_i|-|x_i+y_i| \bigr)^2\right) \\
 \qquad + \frac 1 i \min \left\{|y_i|^p,\,|x_i|^{p-2}|y_i|^2\right\},\label{eq:xi yi}
\end{multline}
where $w_i$ corresponds to $x_i$ and $x_i+y_i$. 
By homogeneity (rescaling both $x_i$ and $y_i$ by the same factor $\frac{1}{|x_i|}$) we may assume that $|x_i|=1$, and up to passing to a subsequence we can assume that $x_i\to \bar x$ as $i\to \infty.$

Note that, when $|y_i|$ is large enough, the left hand side in \eqref{eq:xi yi} behaves like $|y_i|^p$
while the right hand side is bounded by $C(p)|y_i|+\frac1i |y_i|^p$.
This implies that the sequence $y_i$ is uniformly bounded,
and up to a subsequence $y_i$ converges to $\bar y$.
Hence, taking the limit in \eqref{eq:xi yi} we deduce that
$$
|\bar x+\bar y|^p\le |\bar x|^p+p|\bar x|^{p-2}\bar x\cdot \bar y+\frac {1-\kappa} 2\left( p |\bar x|^{p-2} |\bar y|^2+ p(p-2)|\bar w|^{p-2}\bigl( |\bar x|-|\bar x+\bar y| \bigr)^2\right),
$$
which is possible only if $\bar y=0$.
This means that  $y_i \to 0$.  However, for $|x|=1$ and $|y|\ll 1$, it follows from a Taylor expansion that
$$
|x+y|^p-\left[ |x|^p+p|x|^{p-2}x\cdot y+\frac {1-\kappa} 2\left( p |x|^{p-2} |y|^2+ p(p-2)|w|^{p-2}\bigl( |x|-|x+y| \bigr)^2\right)\right]\ge \frac \kappa 3 |y|^2,
$$
which is incompatible with \eqref{eq:xi yi} when $i>\frac3{\kappa}$ (since $y_i$ is converging to $0$).
This leads to a contradiction and proves the lemma when $1<p<2$.

\smallskip

\noindent
$\bullet$ {\it Proof of (ii): the case $p\geq 2$}.
By approximation we can assume that $|x+y|\neq 0$ and $|x|\neq 0$.

\smallskip

\noindent 
{\it - Step (ii)-1: we show that}
\begin{equation}
\label{base}
|x+y|^p\ge |x|^p+p|x|^{p-2}x\cdot y+\frac {1} 2\left( p |x|^{p-2} |y|^2+ p(p-2)|w|^{p-2}\bigl( |x|-|x+y| \bigr)^2\right).
\end{equation}
Setting $z=x+y$, this is equivalent to proving that 
$$|z|^p\ge \left(1-\frac 1 2 p\right) |x|^p +\frac 1 2 p |x|^{p-2}|z|^2 +\frac 1 2 p(p-2)|w|^{p-2}\bigl( |x|-|z| \bigr)^2. $$
Set $f(z):=|z|^p$ and 
$$g(z):=\left(1-\frac 1 2 p\right) |x|^p +\frac 1 2 p |x|^{p-2}|z|^2 +\frac 1 2 p(p-2)|w|^{p-2}\bigl( |x|-|z| \bigr)^2. $$

In the case  $|z|\ge |x|$ we note that $f=g$ and $Df=Dg$ on $\partial B(0,\,|x|)$.
Also,
$$D^2f(z)\frac {z}{|z|}\cdot \frac {z}{|z|}= p(p-1)|z|^{p-2}\ge p(p-1)|x|^{p-2}=D^2g(z) \frac {z} {|z|}\cdot \frac {z} {|z|}\qquad \forall\,|z|\geq |x|.$$
Hence, integrating the Hessian of $f-g$ along the segment $\left[\frac{|x|}{|z|}z,z\right]$, we obtain that $f(z)\ge g(z)$ for $|z|\ge|x|$.

On the other hand, in the case $|z|<|x|$, our aim is to prove that
$$|z|^p\ge \left(1-\frac 1 2 p\right) |x|^p +\frac 1 2 p |x|^{p-2}|z|^2 +\frac 1 2 p(p-2) \frac {|z|}{|x|} |z|^{p-2}\bigl( |x|-|z| \bigr)^2.$$
Setting $t:=\frac{|x|}{|z|}$, this is equivalent to saying that 
$$h(t):=1-\left(1-\frac 1 2 p\right) t^p -\frac 1 2 p t^{p-2}  -\frac 1 2 p(p-2) \frac {(t-1)^2}{t} \geq 0\qquad \forall\,t\geq 1.$$
Since $p\geq 2$, a direct computation shows that, for $t\geq 1$,
\begin{align*}
h'(t)&= -\left(1-\frac 1 2 p\right) p t^{p-1}-\frac 1 2 p (p-2)t^{p-3}-  p (p-2) t^{-1}(t-1)+\frac 1 2 p (p-2) t^{-2}(t-1)^2 \\
& =\frac 1 2 p (p-2)\left[ t^{p-1} -t^{p-3}- 2t^{-1}(t-1)+ t^{-2}(t-1)^2\right]\\
& = \frac 1 2 p (p-2)\frac {t-1}{t^2}\left[ t^{p-1} (t+1)- 2t +  (t-1)\right]=\frac 1 2 p (p-2)\frac {t-1}{t^2} (t^{p-1}-1) (t+1) \ge 0.
\end{align*}
Since $h(1)=0$, this implies that $h(t)\ge h(1)=0$ for $t\geq 1$, as desired.
This concludes the proof of \eqref{base}.

\smallskip

\noindent 
{\it - Step (ii)-2: conclusion.}
Thanks to Step (ii)-1 we deduce that, for any $\kappa>0$ and $x\neq 0$, the inequality 
$$|x+y|^p\ge |x|^p+p|x|^{p-2}x\cdot y+\frac {1-\kappa} 2\left( p |x|^{p-2} |y|^2+ p(p-2)|w|^{p-2}\bigl( |x|-|x+y| \bigr)^2\right)$$
becomes an equality if and only if $y=0$ (note that, since $p\geq 2$, the last term above is trivially positive for $y \neq 0$). 
So, if the statement of the lemma does not hold, we can find sequences $x_i$ and $y_i$ such that 
$$|x_i+y_i|^p\le |x_i|^p+p|x_i|^{p-2}x_i\cdot y_i+\frac {1-\kappa} 2\left( p |x_i|^{p-2} |y_i|^2+ p(p-2)|w_i|^{p-2}\bigl( |x_i|-|x_i+y_i| \bigr)^2\right)+ \frac 1 i |y_i|^p,$$
where $w_i$ corresponds to $x_i$ and $x_i+y_i$. 
As before, by homogeneity we may assume that $|x_i|=1$, and that  $x_i\to \bar x$ as $i\to \infty.$
Also, since the left hand side above behaves like $|y_i|^p$ for $|y_i|\gg 1$ while the right hand side is bounded by $(1-\kappa)\frac{p(p-1)}{2}|y_i|^2+\frac1i |y_i|^p$, as $\kappa>0$ and $p\geq 2$ we deduce that $y_i$ cannot go to $\infty$.  This implies that $y_i$ are uniformly bounded,
and as in the previous case we deduce that the only possibility is that $y_i \to 0$.  However,  since 
$$
|x+y|^p-\left[ |x|^p+p|x|^{p-2}x\cdot y+\frac {1-\kappa} 2\left( p |x|^{p-2} |y|^2+ p(p-2)|w|^{p-2}\bigl( |x|-|x+y| \bigr)^2\right)\right]\ge \frac \kappa 3 |y|^2,
$$
for $|x|=1$ and $|y| \ll 1$, this leads to a contradiction when $i>\frac3{\kappa}$.
\end{proof}

We end this section with the following simple lemma.

\begin{lem}\label{upper bound}
\noindent (i) Let $1<p\le  \frac {2n} {n+2}$. For any $\kappa>0$ there exists $C_1=C_1(p^*,\kappa)>0$ such that, for every $a,b\in \mathbb R$, $a\neq 0$, we have
	$$|a+b|^{p^*}\le |a|^{p^*}+p^*|a|^{p^*-2}ab+ \left(\frac{p^*(p^*-1)} 2 +\kappa \right)\frac {(|a|  +C_1|b|)^{p^*}}{|a|^2+|b|^2}|b|^2.$$

	\noindent (ii)	Let $\frac {2n} {n-2}< p <\infty$. For any $\kappa>0$ there exists $C_1=C_1(p^*,\kappa)>0$ such that, for every $a,b\in \mathbb R$, $|a|\neq 0$, 
	$$|a+b|^{p^*}\le |a|^{p^*}+p^*|a|^{p^*-2}ab+\left(\frac{p^*(p^*-1)} 2 +\kappa\right)|a|^{p^*-2}|b|^2+ C_1|b|^{p^*}.$$

\end{lem}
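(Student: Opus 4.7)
My plan is to handle (i) and (ii) in parallel via a rescaling-plus-Taylor argument. By zero-homogeneity in $(a,b)$ and the sign symmetry $(a,b)\mapsto (-a,-b)$, both inequalities reduce to the one-variable statement
$$F(t) := |1+t|^{p^*}-1-p^*t \le E(t), \qquad t \in \R,$$
where $E(t)$ is the explicit $(\kappa,C_1)$-dependent expression in $b$ obtained by setting $a=1$. Note that I can assume $a>0$ by symmetry (since the bilinear term $p^*|a|^{p^*-2}ab$ also reverses sign under $(a,b)\mapsto (-a,-b)$).

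Near $t=0$, a Taylor expansion of $F$ gives $F(t)=\tfrac{p^*(p^*-1)}{2}t^2+O(|t|^{2+\sigma})$ for some $\sigma=\sigma(p^*)>0$ (note that $t=-1$, the only possibly non-smooth point of $F$, is away from the origin). Expanding $E$ around $0$—using $(1+C_1|t|)^{p^*}/(1+t^2)=1+O(|t|)$ in part (i), and the negligibility of $C_1|t|^{p^*}$ compared to $t^2$ for $p^*>2$ in part (ii)—gives $E(t)=\bigl(\tfrac{p^*(p^*-1)}{2}+\kappa\bigr)t^2+O(|t|^{2+\sigma'})$ for some $\sigma'>0$. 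The positive $\kappa t^2$ surplus therefore absorbs the remainder, and $F\le E$ on an interval $[-\delta,\delta]$ with $\delta=\delta(p^*,\kappa)>0$ (independent of $C_1$, provided $C_1$ stays in any bounded range depending on $\kappa$).

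For $|t|\to\infty$, $F(t)\sim |t|^{p^*}$. In part (i), $E(t)\sim \bigl(\tfrac{p^*(p^*-1)}{2}+\kappa\bigr)C_1^{p^*}|t|^{p^*}$ as one sees by writing $(1+C_1|t|)^{p^*}(1+t^2)^{-1}t^2 \sim C_1^{p^*}|t|^{p^*}$, while in part (ii) $E(t)$ already contains the explicit term $C_1|t|^{p^*}$. In either case, for $C_1$ large depending on $(p^*,\kappa)$, one obtains $E(t)\ge 2F(t)$ for $|t|\ge M=M(p^*,\kappa)$. Finally, on the compact set $\{\delta\le |t|\le M\}$ the function $F$ is continuous and bounded, while the $C_1$-dependent part of $E$ can be made arbitrarily large (it is at least a positive constant times $C_1^{p^*}$ in (i) and $C_1$ in (ii), uniformly for $|t|\ge \delta$); a final enlargement of $C_1$ therefore yields $F\le E$ on the whole line. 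The only subtlety is the behavior of the unusual factor $(|a|+C_1|b|)^{p^*}/(|a|^2+|b|^2)$ in (i) at both scales, but this is dispatched by the two expansions above; the remaining work is purely a bookkeeping of constants.
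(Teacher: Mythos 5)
Your proof is correct and follows essentially the same route as the paper: reduce by homogeneity and sign symmetry to the one-variable inequality in $t=b/a$, use a Taylor expansion with the $\kappa$-surplus near $t=0$, and handle $|t|$ bounded away from zero via the growth of the $C_1$-dependent term together with a compactness/large-$C_1$ argument. The only difference is that the paper proves only part (i) this way and disposes of part (ii) by citing \cite[Lemma 3.2]{FN2019}, whereas you treat (ii) directly by the same scheme.
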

\begin{proof}
Note that (ii) follows from \cite[Lemma 3.2]{FN2019}, so we only need to show (i).
Observe that in this case $p^*\le 2$.

Setting $t:=\frac {b}{a}$, our statement is equivalent to proving that
\begin{equation}
\label{target}
|1+t|^{p^*}-1-p^*t- \left(\frac{p^*(p^*-1)} 2 +\kappa \right) \frac {(1+C_1|t|)^{p^*}}{1+|t|^2}|t|^2 \le 0
\end{equation} 
for any $t\in \mathbb R$ and  some $C_1>0$. 

First of all, by a Taylor expansion,
$$|1+t|^{p^*}=1+p^*t+\frac {p^*(p^*-1)} 2 |t|^2 +o(|t|^2)\qquad \forall\,|t|\ll1.$$
Also, by  the concavity of $t\mapsto t^{\frac 1 {p^*}}$  we have
$$ 1+\frac 1 {p^*} |t|^2 \ge (1+|t|^2)^{\frac 1 {p^*}}\qquad \forall\,|t|\leq 1.$$
Therefore there exists $t_0=t_0(n,p)>0$ small such that, for any $C_1\ge \frac 1 {p^*}$,
$$|1+t|^{p^*}-1-p^*t\le  \left(\frac{p^*(p^*-1)} 2 +\kappa \right) \frac {(1+C_1|t|)^{p^*}}{1+|t|^2}|t|^2\qquad \forall \,t \in [-t_0,t_0].$$
On the other hand, for $|t|>t_0$ we can rewrite \eqref{target} as
\begin{equation}\label{target2}
\Biggl[\Biggl((1+|t|^2)\frac{|1+t|^{p^*}-1-p^*t}{ \left(\frac{p^*(p^*-1)} 2 +\kappa \right)|t|^2}\Biggr)^{\frac 1 {p^*}}-1\Biggr] |t|^{-1} \le C_1.
\end{equation}
Since
the left-hand side of \eqref{target2} is bounded as $|t|\to +\infty$, the existence of a constant $C_1<+\infty$ such that \eqref{target2} holds on $\R\setminus (-t_0,t_0)$ follows by compactness.
\end{proof}

\section{Spectral gaps}\label{spectrum gaps}

Let $v=v_{a,b,x_0} \in \mathcal M$. The goal of this section is to study some embedding/compacteness theorems and spectral gaps for weighted Sobolev/Orlicz-type spaces, where the weights depend on $v$.
Throughout this section we assume that $a_0>0$, $b=1$, and $x_0=0$, that is
$$
v(x)=\frac{a_0}{\left(1+|x|^{\frac{p}{p-1}}\right)^{\frac{n-p}{p}}},
$$
where $a_0>0$ is any constant such that $\frac12 \leq \|v\|_{L^{p^*}}\leq 2.$

Given $\Omega\subset \R^n$, $q\geq 1,$ and a non-negative locally integrable function $g_0:\R^n\to \R$, we define the Banach space $L^q(\Omega;g_0)$ as the space of measurable functions $\varphi:\Omega\to \R$ whose norm
$$\|\varphi \|_{L^q(\Omega;\,g_0)}:=\left(\int_{\Omega} |\varphi|^q \,g_0(x)\, dx\right)^{\frac 1 q}$$
is finite. Also, given  $g_1 \in L^1_{\rm loc}(\R^n\setminus \{0\})$ non-negative, 
we denote by $C^1_{c,0}(\R^n)$ the space of compactly supported functions of class $C^1$ that are constant in a neighborhood of the origin, and we
define $\dot{W}^{1,q}(\mathbb R^n;g_1)$ as the closure of $C^1_{c,0}(\R^n)$ with respect to the norm
$$\|\varphi\|_{\dot{W}^{1,q}(\mathbb R^n;\,g_1)}:=\left(\int_{\mathbb R^n}|D\varphi|^q \,g_1(x) \,dx\right)^{\frac 1 q}.$$

\begin{rem}
\label{rem:weight}
It is important for us to consider weights that are not necessarily integrable at the origin, since $|Dv|^{p-2}\sim |x|^{\frac{p-2}{p-1}} \not\in L^1(B_1)$ for $p \leq \frac{n+2}{n+1}$.
This is why, when defining weighted Sobolev spaces, we consider the space $C^1_{c,0}(\R^n)$, so that gradients vanish near $0$.
Of course, replacing $C^1_{c}(\R^n)$ by $C^1_{c,0}(\R^n)$ plays no role in the case $p>\frac{n+2}{n+1}$.
\end{rem}
\subsection{Compact embedding}

The following embedding theorem generalizes \cite[Corollary 6.2]{FN2019}. 

\begin{prop}\label{compact embedding}
	Let $1<p<\infty$. The space	$\dot{W}^{1,2}(\mathbb R^n;|Dv|^{p-2})$ compactly embeds into $L^2(\mathbb R^n;v^{p^*-2})$.
\end{prop}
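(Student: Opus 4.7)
The plan is to establish the continuous embedding via a Hardy-type inequality coming from the Euler equation \eqref{S equ}, and then upgrade to compactness by a bulk-plus-tail splitting. Setting $c_* := S^p\|v\|_{L^{p^*}(\R^n)}^{p-p^*} > 0$, the weak form of \eqref{S equ} reads $\int |Dv|^{p-2} Dv\cdot D\eta\,dx = c_*\int v^{p^*-1}\eta\,dx$ for every admissible $\eta$. Given $\varphi \in C^1_{c,0}(\R^n)$, the test function $\eta := \varphi^2/v$ is $C^1$ with compact support (it is constant near $0$ because $\varphi$ is, and $v>0$ everywhere); plugging in and expanding $D\eta = 2\varphi D\varphi/v - \varphi^2 Dv/v^2$, the cross term is controlled by Cauchy--Schwarz and $2ab\leq a^2+b^2$, yielding after cancellation
$$c_*\int v^{p^*-2}\varphi^2\,dx \leq \int |Dv|^{p-2} |D\varphi|^2\,dx,$$
which extends by density to all $\varphi\in \dot W^{1,2}(\R^n;|Dv|^{p-2})$.

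Next, let $\{\varphi_k\}$ be bounded in $\dot W^{1,2}(\R^n;|Dv|^{p-2})$; after extracting a subsequence we may assume $\varphi_k \rightharpoonup 0$ weakly in both spaces. Fix small $\epsilon>0$ and large $R>0$. On the annulus $A_{\epsilon,R} := \{\epsilon\leq |x|\leq R\}$ both weights $|Dv|^{p-2}$ and $v^{p^*-2}$ are uniformly bounded above and below by positive constants, so $\{\varphi_k\}$ is bounded in the unweighted $W^{1,2}(A_{\epsilon,R})$ and Rellich--Kondrachov gives $\varphi_k\to 0$ strongly in $L^2(A_{\epsilon,R};v^{p^*-2})$. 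For the two tails, the explicit power-law asymptotics $v\sim |x|^{-(n-p)/(p-1)}$, $|Dv|\sim |x|^{-(n-1)/(p-1)}$ at infinity and $v\sim 1$, $|Dv|\sim |x|^{1/(p-1)}$ near $0$, combined with the rescaling $y=x/R$ (resp.\ $y=x/\epsilon$), reduce the tail estimate
$$\int v^{p^*-2}|\psi|^2\,dx \leq C\max\bigl\{\epsilon^{p/(p-1)},\,R^{-p/(p-1)}\bigr\} \int |Dv|^{p-2}|D\psi|^2\,dx$$
(for $\psi$ supported in $B_\epsilon$ or $\R^n\setminus B_R$ and vanishing on the corresponding sphere) to a classical one-scale weighted Poincar\'e--Hardy inequality with power weights on a unit annular region. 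Applying this to $(1-\chi)\varphi_k$ for a cut-off $\chi$ equal to $1$ on $A_{\epsilon,R}$ and supported in $A_{\epsilon/2,2R}$, the error terms generated by $D\chi$ are supported where both weights are bounded and are therefore controlled by the interior $L^2$ convergence from the previous step. Sending first $k\to\infty$, then $\epsilon\to 0$ and $R\to\infty$ gives $\varphi_k\to 0$ in $L^2(\R^n; v^{p^*-2})$.

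The main obstacle is the tail Hardy inequality near the origin when $1<p<2$, where $|Dv|^{p-2}$ is not locally integrable at $0$---this is precisely the reason the space is defined as the closure of $C^1_{c,0}(\R^n)$ rather than $C^\infty_c(\R^n)$. Consequently the rescaled tail function can carry a non-zero value in a neighborhood of the rescaled origin, so the one-scale Poincar\'e inequality has to be applied with zero trace imposed only on the outer sphere; one must verify that this outer trace alone suffices to rule out the constant mode and close the Poincar\'e-type argument (which is the case on a bounded annular region). The tail at infinity is comparatively easier, since compactly supported test functions automatically vanish at infinity.
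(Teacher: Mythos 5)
Your proposal is correct in substance, but it takes a genuinely different route from the paper's. For the continuous embedding you test the weak form of \eqref{S equ} with the ground-state substitution $\eta=\varphi^2/v$, whereas the paper proves \eqref{embed} by a direct polar-coordinates Hardy computation (Fubini plus Cauchy--Schwarz), which also yields the intermediate bound \eqref{intermediate} that it then recycles for the tail estimate at infinity. One small slip in your justification: $\varphi^2/v$ is \emph{not} constant near the origin (since $v$ is not), but this is irrelevant --- it is $C^1$ with compact support, hence an admissible test function for the weak formulation, and because $D\varphi$ vanishes near $0$ all integrals converge even when $|Dv|^{p-2}\notin L^1_{\rm loc}$. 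For compactness, you apply scale-invariant power-weight Hardy inequalities on $B_\epsilon$ and on $\R^n\setminus B_R$ to cutoff truncations and absorb the cutoff errors by local strong convergence; the paper instead proves tail bounds uniform over the whole space, namely the power gain $\rho^{\vartheta}$ near the origin via a weighted Sobolev inequality in \eqref{embed: small} and a logarithmic gain at infinity via a log-cutoff in \eqref{embed: large}. Your claimed factors are correct: the weight exponents satisfy $\frac{np-2n+2p}{p-1}=\frac{(n-1)(p-2)}{p-1}+2+\frac{p}{p-1}$ and $\frac{(n-1)(p-2)}{p-1}+2<n$ since $p<n$, so on $\{|x|>R\}$ one can pull out $R^{-p/(p-1)}$ and conclude with an \emph{exterior-domain} weighted Hardy inequality (e.g.\ Lemma \ref{HP ineq} with exponent $2$, or $\int |x|^{-\beta-2}\psi^2\,dx\le C\int|x|^{-\beta}|D\psi|^2\,dx$); note that after rescaling the domain is an exterior region, not a ``unit annular region'' as you write, though the needed inequality is classical, so this is an imprecision rather than a gap. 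The inner one-scale inequality with zero trace only on the outer sphere does hold for the full range of $p$ (a Muckenhoupt-type check; for $p<2$ the gradient weight even blows up at the origin, which only helps), and your cutoff bookkeeping should place the Rellich region so that it contains ${\rm supp}\,D\chi$. Comparing the two approaches: yours is more elementary and at infinity even yields a power gain where the paper settles for a logarithmic one, but its tail smallness is only asymptotic along the extracted sequence, while the paper's estimates \eqref{embed: small}--\eqref{embed: large} are quantitative and uniform over the unit ball of $\dot W^{1,2}(\R^n;|Dv|^{p-2})$; for the Proposition itself (and for the way it is used later in the paper) both suffice.
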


To prove this result, we first show an intermediate estimate that will be useful also later.
\begin{lem}\label{compact embedding 2}
	Let $1<p<\infty$, and $\varphi\in \dot{W}^{1,2}(\mathbb R^n;|Dv|^{p-2})\cap L^2(\mathbb R^n;v^{p^*-2})$.
	Then
	\begin{equation}
	\label{embed}
	\int_{\mathbb R^n} v^{p^*-2} |\varphi|^2 \, dx\le C(n,p) \int_{\mathbb R^n}|Dv|^{p-2} |D\varphi|^2 \, dx.
	\end{equation}
	Also, there exists $\vartheta=\vartheta(n,p)>0$ such that, for any $\rho \in (0,1)$, we have
	\begin{equation}
	\label{embed: small}
\int_{B(0,\rho)} v^{p^*-2} |\varphi|^2\, dx\leq C(n,p) \rho^{\vartheta} \int_{\mathbb R^n}|Dv|^{p-2} |D\varphi|^2 \, dx
\end{equation}
and
\begin{equation}
	\label{embed: large}
\int_{\R^n\setminus B(0,\rho^{-1})} v^{p^*-2} |\varphi|^2\, dx \le \frac{C(n,p)}{|\log\rho|^{2}} \int_{\mathbb R^n}|Dv|^{p-2} |D\varphi|^2 \, dx.
\end{equation}	 

\end{lem}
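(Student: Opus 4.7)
The plan is to derive all three estimates by testing the Euler--Lagrange equation
\[
-\operatorname{div}(|Dv|^{p-2}Dv) = \lambda\, v^{p^*-1},\qquad \lambda := S^p\|v\|_{L^{p^*}(\R^n)}^{p-p^*}\sim 1
\]
(i.e.\ \eqref{S equ} for our normalized $v$) against a test function of the form $\eta^2\varphi^2/v$, and exploiting a Young-type cancellation of the singular term $|Dv|^p\varphi^2/v^2$ that is generated by the chain rule. The three estimates will correspond to the choices $\eta\equiv 1$, $\eta$ localized near $0$, and $\eta$ localized near $\infty$.

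For \eqref{embed}, by density I would work first with $\varphi\in C^1_{c,0}(\R^n)$; constancy of $\varphi$ near the origin makes $\varphi^2/v$ an admissible test function (cf.\ Remark~\ref{rem:weight}). Integration by parts, together with $D(\varphi^2/v)=2\varphi D\varphi/v-\varphi^2 Dv/v^2$, gives
\[
\lambda\int v^{p^*-2}\varphi^2\,dx = 2\int \frac{\varphi\,|Dv|^{p-2}Dv\cdot D\varphi}{v}\,dx - \int \frac{|Dv|^p\varphi^2}{v^2}\,dx,
\]
and the elementary inequality $2ab\le a^2+b^2$ applied with $a=v^{-1}|Dv|^{p/2}|\varphi|$ and $b=|Dv|^{(p-2)/2}|D\varphi|$ cancels the second integral on the right, producing exactly $\int |Dv|^{p-2}|D\varphi|^2\,dx$ and hence \eqref{embed} with $C=1/\lambda$.

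For \eqref{embed: small} I would repeat the computation with $\eta\in C^\infty_c(B(0,2\rho))$, $\eta\equiv 1$ on $B(0,\rho)$, $|D\eta|\lesssim 1/\rho$. The same Cauchy--Schwarz manoeuvre handles the main cross term, leaving a cutoff-boundary contribution
\[
E \;\lesssim\; \int_{B(0,2\rho)\setminus B(0,\rho)} \frac{\eta\,\varphi^2\,|Dv|^{p-1}|D\eta|}{v}\,dx.
\]
Using the expansions $v\sim a_0$ and $|Dv|\sim|x|^{1/(p-1)}$ near $0$, the integrand of $E$ is bounded by a constant times $\varphi^2$ on the annulus $A$; H\"older against $|A|\sim\rho^n$ gives $E\lesssim \rho^2 \|\varphi\|_{L^{2^*}(\R^n)}^2$, and the Sobolev inequality (or, when $p>2$ and $|Dv|^{p-2}$ fails to be bounded below, a weighted Sobolev--Hardy variant) combined with \eqref{embed} dominates $\|\varphi\|_{L^{2^*}}^2$ by $\int |Dv|^{p-2}|D\varphi|^2\,dx$, producing the $\rho^\vartheta$ gain for some $\vartheta=\vartheta(n,p)>0$. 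The proof of \eqref{embed: large} is symmetric in spirit, with $\eta$ supported outside $B(0,\rho^{-1}/2)$ and equal to $1$ outside $B(0,\rho^{-1})$, using this time the asymptotics $v\sim|x|^{-(n-p)/(p-1)}$ and $|Dv|\sim|x|^{-(n-1)/(p-1)}$ at infinity.

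The main obstacle is the logarithmic factor $|\log\rho|^{-2}$ in \eqref{embed: large}: at infinity the weights $v^{p^*-2}$ and $|Dv|^{p-2}$ are borderline for a Hardy-type gain, so a step cutoff produces no improvement in $\rho$ at all. To recover the logarithmic decay I would use a \emph{logarithmic} cutoff $\eta$ that transitions linearly in $\log|x|$ over the shell $\rho^{-1/2}\le|x|\le\rho^{-1}$, so that $|x|\,|D\eta|\lesssim|\log\rho|^{-1}$ pointwise; Cauchy--Schwarz on the boundary term across this logarithmically thick region then extracts the $|\log\rho|^{-2}$ factor. Verifying that the weighted scalings at infinity conspire to give precisely this factor, with no polynomial loss reentering through the asymptotics of $v$ and $|Dv|$, is the step I expect to require the most care.
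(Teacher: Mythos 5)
Your proof of \eqref{embed} is correct and takes a genuinely different route from the paper: you test the Euler--Lagrange equation \eqref{S equ} with the Picone-type function $\varphi^2/v$ (admissible, since $v\in C^1$, $v>0$ and $\varphi\in C^1_{c,0}(\R^n)$) and cancel the singular term $\int |Dv|^p\varphi^2/v^2$ by Cauchy--Schwarz; the paper instead integrates along rays in polar coordinates and obtains the \emph{stronger} intermediate bound \eqref{intermediate}, whose weight $|x|^{2}\bigl(1+|x|^{\frac p{p-1}}\bigr)^{-\frac{n(p-2)}p-2}$ behaves at infinity like $|Dv|^{p-2}|x|^{-\frac p{p-1}}\sim v^{p^*-2}|x|^{2}$. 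That extra decay is not cosmetic: it is exactly what the paper's proof of \eqref{embed: large} runs on, and your derivation of \eqref{embed} does not provide it.

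For \eqref{embed: small} and \eqref{embed: large} the proposal has a genuine gap. In the localized identity obtained by testing with $\eta^2\varphi^2/v$, absorbing the cross term $2\int \eta^2\varphi\,|Dv|^{p-2}Dv\cdot D\varphi/v$ into the Hardy term forces you to retain $\int \eta^2 |Dv|^{p-2}|D\varphi|^2\,dx$ with coefficient at least $1$ (any Young split $2AB\le \delta^{-1}A^2+\delta B^2$ needs $\delta\ge 1$ to cancel the singular term). This retained term is only bounded by $\int_{B(0,2\rho)}|Dv|^{p-2}|D\varphi|^2$, resp.\ $\int_{\{|x|\ge \rho^{-1/2}\}}|Dv|^{p-2}|D\varphi|^2$, and carries no factor $\rho^\vartheta$ or $|\log\rho|^{-2}$; since the gradient energy of $\varphi$ may concentrate precisely in those regions, your scheme only reproves a localized form of \eqref{embed}, not the claimed estimates. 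Moreover, your fallback $\|\varphi\|_{L^{2^*}(\R^n)}^2\lesssim \int |Dv|^{p-2}|D\varphi|^2$ is false for $p>2$ (translate a fixed bump to infinity, where $|Dv|^{p-2}\to 0$): only a local version on $B(0,1)$ is available, which is exactly the paper's mechanism for \eqref{embed: small} (the Maz'ja radial-weight Sobolev inequality with weight $|x|\lesssim|Dv|^{p-2}$ on $B(0,1)$, the zeroth-order term handled by \eqref{embed}, and then H\"older on $B(0,\rho)$ producing $\rho^{n(1-2/q)}$). For \eqref{embed: large} there is a second failure, the one you yourself flagged: with $|x||D\eta|\lesssim|\log\rho|^{-1}$ the cutoff term is pointwise of size $|\log\rho|^{-1}\varphi^2|Dv|^{p-1}/(v|x|)\sim |\log\rho|^{-1}v^{p^*-2}|x|^{\frac p{p-1}}\varphi^2$, i.e.\ larger than $|\log\rho|^{-1}v^{p^*-2}\varphi^2$ by the unbounded factor $|x|^{\frac p{p-1}}$, so \eqref{embed} cannot convert it into $|\log\rho|^{-2}$ times the energy. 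The paper avoids both problems by applying \eqref{intermediate} to $\chi_\rho\varphi$: there the gradient part comes weighted by $|Dv|^{p-2}|x|^{-\frac p{p-1}}\le \rho^{\frac p{2(p-1)}}|Dv|^{p-2}$ on the support of $\chi_\rho$ (smallness of the main term), while the $|D\chi_\rho|^2$ part comes weighted by $v^{p^*-2}|x|^2|D\chi_\rho|^2\sim |\log\rho|^{-2}v^{p^*-2}$, exactly matched to \eqref{embed}. So to complete the last two estimates you either need to prove the sharper weighted inequality \eqref{intermediate} (or an equivalent improvement of your Picone bound at infinity), or follow the paper's Sobolev--H\"older argument near the origin; the localized Picone identity alone cannot produce the required smallness.
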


\begin{proof}
To prove \eqref{embed}, we can assume by approximation that $\varphi \in C^1_{c,0}(\R^n)$
(see Remark \ref{rem:weight}).

We note that, thanks to Fubini's theorem and using polar coordinates, 
	\begin{align*}
	\int_{\mathbb R^n} v^{p^*-2} |\varphi|^2 \, dx
	& \leq  C(n,p) \int_{\mathbb S^{n-1}}\int_{0}^{\infty} r^{n-1} (1+ r^{\frac p {p-1}})^{-\frac {n(p-2)} {p}-2 } |  \varphi(r\theta)|^2 \,dr\,d\theta \\
	& \le  C(n,p) \int_{\mathbb S^{n-1}}\int_{0}^{\infty} r^{n-1} (1+  r^{\frac p {p-1}})^{-\frac {n(p-2)} {p}-2 } \int_{r}^{\infty}   |\varphi(t\theta)||D\varphi(t\theta)|\,dt \,dr\, d\theta \\
	& \le  C(n,p) \int_{\mathbb S^{n-1}}\int_{0}^{\infty} \int_{0}^{t}   |\varphi(t\theta)||D\varphi(t\theta)|  r^{n-1} (1+  r^{\frac p {p-1}})^{-\frac {n(p-2)} {p}-2 } \,dr \,dt\, d\theta\\
	& \le   C(n,p) \int_{\mathbb S^{n-1}}\int_{0}^{\infty}  |\varphi(t\theta)||D\varphi(t\theta)|   t^{n} (1+  t^{\frac p {p-1}})^{-\frac {n(p-2)} {p}-2 } \,dt\, d\theta. 
	\end{align*}
	Thus, by Cauchy-Schwarz inequality we get
	\begin{align*}
	 \int_{\mathbb R^n} v^{p^*-2} |\varphi|^2 \, dx &
 \le  C(n,p) \biggl(\int_{\mathbb S^{n-1}}\int_{0}^{\infty}   |D\varphi(t\theta)|^2   t^{n+1} (1+  t^{\frac p {p-1}})^{-\frac {n(p-2)} {p}-2 } \,dt\, d\theta\biggr)^{1/2}\cdot\\
&\qquad\qquad\qquad\qquad\cdot \biggl(\int_{\mathbb S^{n-1}}\int_{0}^{\infty} t^{n-1} (1+ t^{\frac p {p-1}})^{-\frac {n(p-2)} {p}-2 } |  \varphi(t\theta)|^2 \,dt\,d\theta\biggr)^{1/2},
	\end{align*}
	and since the last term in the right hand side coincides with $\|\varphi \|_{L^2(\R^n;v^{p^*-2})}$ (up to a multiplicative constant), we conclude that
	\begin{equation}\label{intermediate}
	\begin{split}
 \int_{\mathbb R^n} v^{p^*-2} |\varphi|^2 \, dx&
 \le  C(n,p) \int_{\mathbb S^{n-1}}\int_{0}^{\infty}   |D\varphi(t\theta)|^2   t^{n+1} (1+  t^{\frac p {p-1}})^{-\frac {n(p-2)} {p}-2 } \,dt\, d\theta\\
 & \le  C(n,p) \int_{\R^n}   |D\varphi(x)|^2   |x|^{2} (1+  |x|^{\frac p {p-1}})^{-\frac {n(p-2)} {p}-2 } \,dx.
 \end{split}
	\end{equation}	
	We now observe that
	$$
	|x|^2 \sim |x|^{1+\frac 1 {p-1}} |Dv|^{p-2} \leq C(n,p)\, |Dv|^{p-2}\qquad  \text{ when } |x|\in (0,1],
      $$
	and 
	\begin{equation*}
	\begin{split} |x|^{2} (1+  |x|^{\frac p {p-1}})^{-\frac {n(p-2)} {p}-2 }
	 \sim   |x|^{-\frac p {p-1}} |Dv|^{p-2}\leq C(n,p)\, |Dv|^{p-2} \qquad\text{ when } |x|\in (1,\,\infty),
       \end{split}\end{equation*}
so \eqref{embed} follows from \eqref{intermediate}.

\smallskip

To prove \eqref{embed: small}, we apply \eqref{embed} and the Sobolev inequality with radial weights (see e.g.\ \cite[Section 2.1]{M1995}). More precisely, since $|Dv|^{p-2}\geq c(n,p) |x|$ inside $B(0,1)$, \begin{multline*}
	 \int_{\mathbb R^n}|Dv|^{p-2} |D\varphi|^2 \, dx \geq c(n,p)\int_{\mathbb R^n} \bigl( v^{p^*-2} |\varphi|^2+|Dv|^{p-2} |D\varphi|^2\bigr) \, dx\\
	 \geq c(n,p)\int_{B(0,1)} \bigl(  |\varphi|^2+|x|\,|D\varphi|^2\bigr) \, dx
	\geq \biggl(\int_{B(0,1)}|\varphi|^{q}\,dx\biggr)^{\frac{2}{q}},
	\end{multline*}
	where $q=q(n)>2$. Thus, by H\"older inequality, for any $\rho\in (0,1)$ we get
	\begin{multline*}
	\int_{B(0,\rho)} v^{p^*-2} |\varphi|^2\, dx\leq C(n,p)\int_{B(0,\rho)} |\varphi|^2\, dx\\
	\leq C(n,p)\rho^{n\left(1-\frac{2}{q}\right)}\biggl(\int_{B(0,\rho)}|\varphi|^{q}\,dx\biggr)^{\frac{2}{q}}
	\leq C(n,p)\rho^{n\left(1-\frac{2}{q}\right)}  \int_{\mathbb R^n}|Dv|^{p-2} |D\varphi|^2 \, dx,
	\end{multline*}
	as desired.

\smallskip

To prove \eqref{embed: large}, we define
 $$
 \chi_\rho(x):=\left\{
 \begin{array}{ll}
 0 &\text{for }|x|<\rho^{-1/2}\\
 \frac{2\log|x|- |\log \rho|}{|\log \rho|}&\text{for }\rho^{-1/2}\leq |x|\leq \rho^{-1}\\
 1 & \text{for }\rho^{-1}\leq |x|
 \end{array}
 \right.
 $$
and we apply  \eqref{intermediate} to the function $\phi_\rho:=\chi_\rho \varphi$:
\begin{align*}
\int_{\R^n\setminus B(0,\rho^{-1})} v^{p^*-2} |\varphi|^2\, dx
&\le \int_{\R^n} v^{p^*-2} |\phi_\rho|^2\, dx\le C(n,p) \int_{\mathbb R^n} |x|^{2} (1+  |x|^{\frac p {p-1}})^{-\frac {n(p-2)} {p}-2 }  |D\phi_\rho|^2 \, dx\\
& \le C(n,p) \int_{\mathbb R^n} |x|^{2} (1+  |x|^{\frac p {p-1}})^{-\frac {n(p-2)} {p}-2 }\chi_\rho^2\, |D\varphi|^2 \, dx\\
&\qquad+ C(n,p) \int_{\R^n} |x|^{2} (1+  |x|^{\frac p {p-1}})^{-\frac {n(p-2)} {p}-2 } |D\chi_\rho|^2\varphi^2 \, dx\\
&\le C(n,p) \int_{\mathbb R^n\setminus B(0,\rho^{-1/2})}|x|^{2} (1+  |x|^{\frac p {p-1}})^{-\frac {n(p-2)} {p}-2 } |D\varphi|^2 \, dx\\
&\qquad+ C(n,p) |\log \rho|^{-2} \int_{B(0,\rho^{-1})\setminus B(0,\rho^{-1/2})} (1+  |x|^{\frac p {p-1}})^{-\frac {n(p-2)} {p}-2 } \varphi^2 \, dx\\
&\le C(n,p)\rho^{\frac{p}{p-1}} \int_{\mathbb R^n\setminus B(0,\rho^{-1})}|Dv|^{p-2} |D\varphi|^2 \, dx\\
&\qquad+ C(n,p) |\log \rho|^{-2}  \int_{B(0,\rho^{-1})\setminus B(0,\rho^{-1/2})}|v|^{p^*-2}  \varphi^2 \, dx\\
&\le C(n,p) |\log \rho|^{-2} \int_{\mathbb R^n}|Dv|^{p-2} |D\varphi|^2 \, dx,
\end{align*}
where the last inequality follows from \eqref{embed}.
\end{proof}

\begin{proof}[Proof of Proposition~\ref{compact embedding}]
Let $\varphi_i$ be a sequence of functions in $\dot{W}^{1,2}(\mathbb R^n;|Dv|^{p-2})$ 
with uniformly bounded norm. 
It follows by \eqref{embed} that their $L^2(\mathbb R^n;v^{p^*-2})$ norm is uniformly bounded as well.

Since both $|Dv|^{p-2}$ and $v^{p^*-2}$ are locally bounded away from zero and infinity in $\R^n\setminus \{0\}$, by Rellich-Kondrachov Theorem and a diagonal argument we deduce that, up to a subsequence, $\varphi_i$ converges to some function 
$\varphi$ both weakly in $\dot{W}^{1,2}(\R^n;|Dv|^{p-2})\cap L^2(\R^n;v^{p^*-2})$ and strongly in $L^2_{\rm loc}(\R^n\setminus \{0\};v^{p^*-2})$.

Also, it follows by \eqref{embed: small} and \eqref{embed: large} that, for any $ \rho\in (0,1)$,
$$
\int_{\R^n\setminus B(0,\rho)} v^{p^*-2} |\varphi_i|^2\, dx \le C(n,p) \rho^{\vartheta},\qquad
 \int_{\mathbb R^n\setminus B(0,\rho^{-1})} v^{p^*-2} |\varphi_i|^2\, dx\le  \frac{C(n,p)}{|\log \rho|^2} . 
$$
We conclude the proof  by  defining the compact set $K_\rho:=\overline{B(0,\rho^{-1})}\setminus B(0,\rho)$ and applying the strong convergence of $\varphi_i$ on $K_\rho$, together with the arbitrariness of $\rho$ (that can be chosen arbitrarily small).
\end{proof}

As we shall see, the previous result allows us to deal with the case $p> \frac {2n}{n+2}$.
However, when $1<p\le \frac {2n}{n+2}$, we will need a much more delicate compactness result that we now present.

\begin{lem}\label{weighted compact}
	Let $1<p\le \frac {2n}{n+2}$, and let $\phi_i$ be a sequence of functions in  $\dot W^{1,p}(\mathbb R^n)$ satisfying
\begin{equation}
\label{eq:weighted W12}
\int_{\mathbb R^n} \bigl( |Dv|+\ez_i|D\phi_i| \bigr)^{p-2} |D\phi_i|^2\, dx\le  1,
\end{equation}
	where $\ez_i \in (0,1)$ is a sequence of positive numbers converging to $0$. Then, up to a subsequence, $\phi_i$ converges weakly in $\dot W^{1,p}(\R^n)$ to some function $\phi\in \dot W^{1,p}(\mathbb R^n)\cap L^2(\R^n;v^{p^*-2})$.
	Also, given any constant $C_1\geq 0$ it holds\footnote{As already noticed in the introduction, the expression appearing in the left hand side of \eqref{eq:L2 conv p small} behaves 
like $v^{p^*-2}|\phi_i|^2$ when $|\phi_i|\ll \frac{v}{\ez_i}$, and like $\ez_i^{p^*-2}|\phi_i|^{p^*}$ otherwise.
Analogously, the expression in \eqref{eq:weighted W12} behaves like $|Dv|^{p-2}|D\phi_i|^2$ when $|D\phi_i|\ll \frac{|Dv|}{\ez_i}$, and like $\ez_i^{p-2}|D\phi_i|^{p}$ otherwise.
These substantial changes of behavior, and the fact that the change in size of the gradients does not necessarily correspond to a change in size of the functions, make the proof particularly delicate.}
	\begin{equation}
	\label{eq:L2 conv p small} \int_{\mathbb R^n} \frac{(v+C_1\ez_i\phi_i)^{p^*}}{v^2+|\ez_i\phi_i|^2} |\phi_i|^2\, dx \to \int_{\mathbb R^n} v^{p^*-2}|\phi|^2\, dx\qquad \text{as }i\to \infty.
	\end{equation}
\end{lem}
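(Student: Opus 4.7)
The plan is to proceed in three main stages.

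\emph{Stage 1: Uniform $\dot W^{1,p}$-bound and a.e.\ convergence.} Decompose $\R^n=G_i\cup B_i$, where $G_i:=\{\ez_i|D\phi_i|\le|Dv|\}$ and $B_i:=G_i^c$. Since $p<2$, the integrand in \eqref{eq:weighted W12} is comparable to $|Dv|^{p-2}|D\phi_i|^2$ on $G_i$ and to $\ez_i^{p-2}|D\phi_i|^p$ on $B_i$. The assumption thus yields $\int_{G_i}|Dv|^{p-2}|D\phi_i|^2\le C$ and $\int_{B_i}\ez_i^{p-2}|D\phi_i|^p\le C$. Using H\"older with exponents $2/p$ and $2/(2-p)$ on $G_i$ together with $\|Dv\|_{L^p}<\fz$, one gets $\int_{G_i}|D\phi_i|^p\le C$; on $B_i$ directly $\int_{B_i}|D\phi_i|^p\le C\ez_i^{2-p}\le C$. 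So $\phi_i$ is bounded in $\dot W^{1,p}(\R^n)$, and after extraction $\phi_i\rightharpoonup\phi$ weakly in $\dot W^{1,p}$, with $\phi_i\to\phi$ a.e.\ by Rellich and a diagonal argument.

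\emph{Stage 2: Pointwise analysis of the target integrand.} Set $E_i:=\{\ez_i|\phi_i|>v\}$. Using $p^*\le 2$ and a case analysis on whether $\ez_i|\phi_i|\le v$ or $\ez_i|\phi_i|>v$, one verifies
\[\frac{(v+C_1\ez_i\phi_i)^{p^*}}{v^2+|\ez_i\phi_i|^2}|\phi_i|^2\sim \min\bigl\{v^{p^*-2}|\phi_i|^2,\;\ez_i^{p^*-2}|\phi_i|^{p^*}\bigr\}.\]
On $E_i$, the key inequality $\ez_i^{p^*-2}|\phi_i|^{p^*}\le v^{p^*-2}|\phi_i|^2$ holds (since $p^*-2\le 0$ and $\ez_i|\phi_i|\ge v$). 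Together with $\phi_i\to\phi$ a.e.\ and $\ez_i\to 0$, the integrand converges pointwise a.e.\ to $v^{p^*-2}|\phi|^2$, and Fatou's lemma gives $\int v^{p^*-2}|\phi|^2\le\liminf_i\int(\text{integrand})$.

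\emph{Stage 3: Weighted $L^2$-compactness and passage to the limit.} The key obstacle is that on $B_i$ the integral $\int|Dv|^{p-2}|D\phi_i|^2$ need not be finite, so Proposition \ref{compact embedding} cannot be applied directly to $\phi_i$. I would therefore introduce a truncation $T_{M_i}\phi_i$ with $M_i\to\fz$ tuned to $\ez_i$, so that on the truncation the $\dot W^{1,2}(|Dv|^{p-2})$-bound is restored; the global embedding \eqref{embed} together with the localized tail estimates \eqref{embed: small}--\eqref{embed: large} then give strong convergence of $T_{M_i}\phi_i$ in $L^2(v^{p^*-2})$. The residue $\phi_i-T_{M_i}\phi_i$, supported on $\{|\phi_i|>M_i\}$, contributes a vanishing error to the target integral: on $\{|\phi_i|>M_i\}\cap E_i$ one uses the inequality from Stage 2 together with the Sobolev $L^{p^*}$-bound; on $\{|\phi_i|>M_i\}\cap E_i^c$ (where necessarily $v\ge\ez_iM_i$) the smallness of the set and the equi-integrability of $v^{p^*-2}|\phi_i|^2$ control the contribution. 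Combined with the pointwise convergence and Scheff\'e's lemma (providing equi-integrability of the dominating $v^{p^*-2}|\phi_i|^2$), a generalized dominated convergence argument yields \eqref{eq:L2 conv p small}.

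\emph{Main obstacle.} As emphasized in the footnote, the function dichotomy ``$\ez_i|\phi_i|$ vs.\ $v$'' and the gradient dichotomy ``$\ez_i|D\phi_i|$ vs.\ $|Dv|$'' do not occur in the same subsets of $\R^n$, so the weighted $\dot W^{1,2}$ structure required by Proposition \ref{compact embedding} is not available globally for $\phi_i$. The multiscale truncation in Stage 3, reconciling the function-scale and gradient-scale decompositions, is the technically delicate part.
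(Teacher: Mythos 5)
There is a genuine gap, and it sits exactly at the point you yourself flag as the main obstacle. The central claim of Stage 3 --- that a truncation $T_{M_i}\phi_i$ at a function level $M_i$ ``tuned to $\ez_i$'' restores the $\dot W^{1,2}(\R^n;|Dv|^{p-2})$ bound so that Proposition~\ref{compact embedding} applies --- is unjustified and in general false. Truncating in the values of $\phi_i$ does nothing on the set $\{\ez_i|D\phi_i|>|Dv|\}\cap\{\phi_i<M_i\}$: there the hypothesis \eqref{eq:weighted W12} only controls $\ez_i^{p-2}|D\phi_i|^{p}$, while $|Dv|^{p-2}|D\phi_i|^2$ exceeds it by the unbounded factor $\bigl(\ez_i|D\phi_i|/|Dv|\bigr)^{2-p}$, and since there is no pointwise relation between the size of $\phi_i$ and the size of $D\phi_i$, no choice of $M_i$ removes this region. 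The paper does not try to return to the $|Dv|^{p-2}$-weighted space at all: it truncates at the $v$-proportional level $\zeta v/\ez_i$ (not at a constant level) and proves Poincar\'e/Hardy-type estimates directly for the Orlicz energy $(|Dv|+\ez_i|D\phi_i|)^{p-2}|D\phi_i|^2$ (Step~1, leading to \eqref{poincare}, via the weighted Hardy--Poincar\'e inequality of \cite{S2014} and Lemma~\ref{HP ineq}), which is also what supplies the local higher integrability and the decay at infinity that your outline never establishes.

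The treatment of the residue is likewise insufficient and partly circular. On $\{\ez_i\phi_i>v\}$ you propose to control $\ez_i^{p^*-2}|\phi_i|^{p^*}$ by ``the Sobolev $L^{p^*}$-bound'', but $p^*\le 2$ here, so $\ez_i^{p^*-2}\to\infty$ and $\|\phi_i\|_{L^{p^*}}\le C$ gives nothing; what is needed is the quantitative gain $\ez_i^{p^*-2}\int|\phi_{i,2}|^{p^*}\to 0$, which is precisely the delicate estimate \eqref{phi2 inequ} of the paper, obtained from the supersolution/energy-comparison bound \eqref{phi1 large} (testing $L_v[\zeta v]\ge 0$ against $(\ez_i\phi_i-\zeta v)_+$) together with a two-case Sobolev/H\"older iteration --- nothing in your scheme produces an estimate of this kind. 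Similarly, invoking ``equi-integrability of the dominating $v^{p^*-2}|\phi_i|^2$'' and Scheff\'e's lemma is circular: a uniform bound on $\int v^{p^*-2}|\phi_i|^2$ is not available a priori, because \eqref{eq:weighted W12} only controls the weaker Orlicz quantity, and $v^{p^*-2}|\phi_i|^2$ dominates it exactly where $\ez_i\phi_i\gg v$; such a bound is part of what the lemma (and Corollary~\ref{cor:weighted poincare}) proves. Stage~1 and the Fatou half of Stage~2 are correct, but they only yield the easy inequality $\int_{\R^n} v^{p^*-2}|\phi|^2\,dx\le\liminf_i$ of the left-hand side of \eqref{eq:L2 conv p small}; the substance of the lemma is the reverse direction, and the proposal as written does not reach it.
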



\begin{proof} 
Up to replacing $\phi_i$ by $|\phi_i|,$ we can assume that 
	$\phi_i\ge 0$. Note that $p<p^*\le 2$ under our assumption.
	
	Observe that, by H\"older inequality,
		\begin{align*}
	\int_{\mathbb R^n}  |D\phi_i|^p \, dx
	&\le \biggl(\int_{\mathbb R^n} \bigl( |Dv|+\ez_i|D\phi_i| \bigr)^{p-2} |D\phi_i|^2 \, dx\biggr)^{\frac p2}\biggl(\int_{\mathbb R^n} \bigl( |Dv|+\ez_i|D\phi_i| \bigr)^{p} \, dx\biggr)^{1-\frac p2}\\
	&\le C(n,p)\biggl(\int_{\mathbb R^n} \bigl( |Dv|+\ez_i|D\phi_i| \bigr)^{p-2} |D\phi_i|^2 \, dx\biggr)^{\frac p2}\biggl(1+\ez_i^p\int_{\mathbb R^n}  |D\phi_i|^p \, dx\biggr)^{1-\frac p2},
	\end{align*}
	that combined with \eqref{eq:weighted W12} gives
	\begin{equation}
	\label{Lp phi}
	\left(\int_{\mathbb R^n}  |D\phi_i|^p \, dx\right)^{\frac 2 p}\le C(n,p) \int_{\mathbb R^n} \bigl( |Dv|+\ez_i|D\phi_i| \bigr)^{p-2} |D\phi_i|^2 \, dx \leq C(n,p).
	\end{equation}
	Thus, up to a subsequence, $\phi_i$ converges weakly in $\dot W^{1,p}(\mathbb R^n)$ and also a.e. to some function  $\phi \in \dot W^{1,p}(\mathbb R^n)$. 
	Hence, to conclude the proof, we need to show the validity of \eqref{eq:L2 conv p small}.

	We first prove it under the assumption that
	$\ez_i \phi_i\le \zeta v$
	with some small constant $\zeta=\zeta(n,p,C_1)\in (0,1)$ be determined.
	Later, we will remove this assumption.
	
	\smallskip

\noindent 
$\bullet$ {\it Step 1: proof of \eqref{eq:L2 conv p small} when $\ez_i \phi_i\le \zeta v$}.
	Since $\ez_i \phi_i $ is bounded by $\zeta v\leq v$, we  have that $\biggl(1+\frac{\ez_i \phi_i}{v}\biggr)\leq 2$, thus
	\begin{align*}
	\int_{\mathbb R^n} (v+\ez_i \phi_i)^{p^*-2}|\phi_i|^2\, dx
	&\le  \int_{\mathbb R^n} v^{p^*-2} \biggl(1+\frac{\ez_i \phi_i}{v}\biggr)^{p^*-2}|\phi_i|^2\,dx \\
	&\le 2^{p^*-p}  \int_{\mathbb R^n} v^{p^*-2} \biggl(1+\frac{\ez_i \phi_i}{v}\biggr)^{p-2}|\phi_i|^2\,dx.
	\end{align*}
	Recall that 
	\begin{equation}
	\label{v and Dv}
	v\sim (1+|x|^{\frac p {p-1}})^{1-\frac n p} \quad \text{ and }  \quad |Dv|\sim (1+|x|^{\frac p {p-1}})^{-\frac n p} |x|^{\frac 1 {p-1}}, 
	\end{equation} 
	where the constants depend  only on $p$ and $n$. Moreover, the following Hardy-Poincar\'e inequality holds~\cite{S2014}\footnote{More precisely, the case $\gamma>1$ is stated in \cite[Theorem 3.1]{S2014},
	while the case $\gamma=1$ follows from the classical Hardy inequality (see for instace \cite[Theorem 4.1]{S2014}).}: For any $p>1$ and $\gamma\geq 1$, and any compactly supported function $\xi\in W^{1,\, p}(\mathbb R^n)$, one has
	$$\int_{\mathbb R^n} |\xi|^p\left[\left(1+|x|^{\frac p {p-1}}\right)^{p-1}\right]^{\gamma-1}\, dx \le C(n,p,\gamma) \int_{\mathbb R^n} |D\xi|^p\left[\left(1+|x|^{\frac p {p-1}}\right)^{p-1}\right]^{\gamma}\, dx.$$
	By approximation, we can apply this inequality with
	$$
	\gamma=1+\frac{(2-p^*)\left(\frac{n}{p}-1\right)}{p-1}\qquad \text{ and }\qquad \xi=\biggl(1+\frac{\ez_i \phi_i}{v}\biggr)^{\frac {p-2} p}|\phi_i|^{\frac 2 p}.
	$$ Thus, since $v^{p^*-2}\sim \left[\left(1+|x|^{\frac p {p-1}}\right)^{p-1}\right]^{\gamma-1}$, we get
	\begin{equation}
	\begin{split}\label{phi11}
	&\int_{\mathbb R^n} (v+\ez_i \phi_i)^{p^*-2}|\phi_i|^2\, dx\le C(n,p) \int_{\mathbb R^n} v^{p^*-2} \biggl(1+\frac{\ez_i \phi_i}{v}\biggr)^{p-2}|\phi_i|^2\,dx\\
	&\qquad \leq C(n,p) \biggl\|\biggl(1+\frac{\ez_i \phi_i}{v}\biggr)^{\frac {p-2} p} |\phi_i|^{\frac 2 p}\biggr\|^p_{\dot W^{1,p}\bigl(\R^n;v^{p^*-2}\bigl(1+|x|^{\frac p {p-1}}\bigr)^{p-1}\bigr)}\\
	&\qquad\le C(n,p) \int_{\mathbb R^n} v^{p^*-2}\left(1+|x|^{\frac p {p-1}}\right)^{p-1}\cdot \\
	&\qquad \qquad \cdot
	\biggl[
	\biggl(1+\frac{\ez_i \phi_i}{v}\biggr)^{-2}|\phi_i|^2 \left( \frac {\ez_i \phi_i|Dv|}{v^2} + \frac{\ez_i|D\phi_i|}{v}\right)^p + \biggl(1+\frac{\ez_i \phi_i}{v}\biggr)^{p-2}|\phi_i|^{2-p}|D\phi_i|^p
	\biggr]\,dx  \\
	&\qquad\le C(n,p) \int_{\mathbb R^n} v^{p^*-2}\left(1+|x|^{\frac p {p-1}}\right)^{p-1} \left[|\phi_i|^2 \left( \frac { \zeta |Dv|}{v} + \frac{\ez_i|D\phi_i|}{v}\right)^p +|\phi_i|^{2-p}|D\phi_i|^p
	\right]\,dx,
	\end{split}
	\end{equation}
	where, in the last inequality, we used that  $0\leq \frac {\ez_i\phi_i}{v}\le \zeta<1$.
	
	We now apply \eqref{young} to the last integrand in \eqref{phi11} with $\ez=\ez_i, r=|x|, a=|\phi_i|, b=|D\phi_i|$. In this way, thanks to  \eqref{phi11} and since $v+\ez_i\phi_i\leq 2v$, we deduce that for any $\ez_0>0$ there exists $\zeta=\zeta(\ez_0) \in (0,1)$  such that
	\begin{align*}
	\int_{\mathbb R^n} v^{p^*-2}|\phi_i|^2\, dx&\leq 2^{2-p^*}\int_{\mathbb R^n} (v+\ez_i \phi_i)^{p^*-2}|\phi_i|^2\, dx \\
	& \leq C(n,p) \biggl\|\biggl(1+\frac{\ez_i \phi_i}{v}\biggr)^{\frac {p-2} p} |\phi_i|^{\frac 2 p}\biggr\|^p_{\dot W^{1,p}\bigl(\R^n;v^{p^*-2}\bigl(1+|x|^{\frac p {p-1}}\bigr)^{p-1}\bigr)}\\
	&\ \le C(n,p) \ez_0 \int_{\mathbb R^n} v^{p^*-2}|\phi_i|^2\, dx+  C(\ez_0,n,p) \int_{\mathbb R^n} \bigl( |Dv|+\ez_i|D\phi_i| \bigr)^{p-2}|D\phi_i|^2\, dx.	\end{align*}
	Thus, fixing $\ez_0$ small enough so that $C(n,p) \ez_0\le \frac 1 2$, it follows from \eqref{eq:weighted W12} and the inequality above that	
	\begin{multline}
	\label{poincare}
	\int_{\mathbb R^n} v^{p^*-2}|\phi_i|^2\, dx+\biggl\|\biggl(1+\frac{\ez_i \phi_i}{v}\biggr)^{\frac {p-2} p} |\phi_i|^{\frac 2 p}\biggr\|^p_{\dot W^{1,p}\bigl(\R^n;v^{p^*-2}\bigl(1+|x|^{\frac p {p-1}}\bigr)^{p-1}\bigr)}\\
	\le C(n,p) \int_{\mathbb R^n} \bigl( |Dv|+\ez_i|D\phi_i| \bigr)^{p-2}|D\phi_i|^2\, dx\le C(n,p). 
	\end{multline}
	In particular, the sequence $\left(1+\frac{\ez_i \phi_i}{v}\right)^{\frac {p-2} p} |\phi_i|^{\frac 2 p}$ is uniformly bounded in $\dot W^{1,p}_{\rm loc}(\R^n)\subset L^{p^*}_{\rm loc}(\R^n)$.
	Since $\left(1+\frac{\ez_i \phi_i}{v}\right)\sim 1$, this implies that $|\phi_i|^{\frac 2 p}\in L^{p^*}_{\rm loc}(\R^n)$.
	Combining this higher integrability estimate with the a.e. convergence of $\phi_i$ to $\phi$, 
	by dominated convergence we deduce that, for any $R>1,$
		\begin{equation}
	\label{a.e.}
	\int_{B(0,R)} \frac{(v+C_1\ez_i\phi_i)^{p^*}}{v^2+|\ez_i\phi_i|^2} |\phi_i|^2\, dx \to \int_{B(0,R)} v^{p^*-2}|\phi|^2\, dx \qquad \text{as $i \to \infty$}
	\end{equation}	(recall that $\ez_i\to 0$).

	Also, since $1<p\leq \frac{2n}{n+2}$ it follows that $n\geq 3$, and therefore
$$\frac{-np+2n-2p}{p-1}+n=\frac{n-2p}{p-1}> 0.$$ 
	This allows us to apply Lemma~\ref{HP ineq} to $\phi_i$ with
	$$\az=\frac{np-2n+2p}{p-1},$$
	 and similarly to \eqref{phi11} we obtain (recall \eqref{v and Dv})
	\begin{align*}
	& \int_{\mathbb R^n\setminus B(0,R)} \frac{(v+C_1\ez_i\phi_i)^{p^*}}{v^2+|\ez_i\phi_i|^2} |\phi_i|^2\, dx\\
	&\qquad \le  C(n,p,C_1) \int_{\mathbb R^n\setminus B(0,R)} v^{p^*-2} \biggl(1+\frac{\ez_i \phi_i}{v}\biggr)^{p-2}|\phi_i|^2\,dx\\
	&\qquad \le  C(n,p,C_1) \int_{\mathbb R^n\setminus B(0,R)}  |x|^{\frac{-np+2n-2p}{p-1}+p}\,\cdot \\
	&\qquad \qquad \cdot
	\biggl[
	\biggl(1+\frac{\ez_i \phi_i}{v}\biggr)^{-2}|\phi_i|^2 \left( \frac {\ez_i \phi_i|Dv|}{v^2} + \frac{\ez_i|D\phi_i|}{v}\right)^p + \biggl(1+\frac{\ez_i \phi_i}{v}\biggr)^{p-2}|\phi_i|^{2-p}|D\phi_i|^p
	\biggr]\,dx\\
	&\qquad \le C(n,p,C_1) \int_{\mathbb R^n\setminus B(0,R)}   |x|^{\frac{-np+2n-2p}{p-1}+p}\left[|\phi_i|^2 \left( \frac { \zeta |Dv|}{v} + \frac{\ez_i|D\phi_i|}{v}\right)^p +|\phi_i|^{2-p}|D\phi_i|^p
	\right]\,dx.
	\end{align*}
Then, applying \eqref{inter} to the last term above with $\ez=\ez_i, r=|x|, a=|\phi_i|, b=|D\phi_i|$, we obtain that for any $\ez_0'>0$ there exists $\zeta=\zeta(\ez_0') \in (0,1)$  such that
	\begin{align*}
	\int_{\mathbb R^n\setminus B(0,R)}\frac{(v+C_1\ez_i\phi_i)^{p^*}}{v^2+|\ez_i\phi_i|^2} |\phi_i|^2\, dx   
	 &\le  C(n,p,C_1) \ez_0' \int_{\mathbb R^n\setminus B(0,R)} v^{p^*-2}|\phi_i|^2\, dx\\
	 &\qquad+  C(\ez_0',n,p,C_1) R^{-\frac p{p-1}} \int_{\mathbb R^n\setminus B(0,R)}\bigl( |Dv|+\ez_i|D\phi_i| \bigr)^{p-2}|D\phi_i|^2\, dx \\
	 &\le  C(n,p,C_1) \ez_0' \int_{\mathbb R^n\setminus B(0,R)}\frac{(v+C_1\ez_i\phi_i)^{p^*}}{v^2+|\ez_i\phi_i|^2} |\phi_i|^2\, dx\\
	& \qquad +  C(\ez_0',n,p,C_1)  R^{-\frac p{p-1}} \int_{\mathbb R^n\setminus B(0,R)} \bigl( |Dv|+\ez_i|D\phi_i| \bigr)^{p-2}|D\phi_i|^2\, dx. 
	\end{align*}
	Thus, by fixing $\ez_0'$ so that $C(n,p,C_1) \ez_0'\le \frac 1 2$, it follows that
	$$\int_{\mathbb R^n\setminus B(0,R)} \frac{(v+C_1\ez_i\phi_i)^{p^*}}{v^2+|\ez_i\phi_i|^2} |\phi_i|^2\, dx \le C(n,p,C_1) R^{-\frac p{p-1}} \int_{\mathbb R^n}  \bigl( |Dv|+\ez_i|D\phi_i| \bigr)^{p-2}|D\phi_i|^2\, dx\le C(n,p) R^{-\frac p{p-1}}.$$
	Combining this bound with \eqref{poincare} and \eqref{a.e.}, by the arbitrariness of $R$ we conclude that $\phi\in L^2(\R^n;v^{p^*-2})$ and that 
	\eqref{eq:L2 conv p small} holds. This concludes the proof under the assumption that $\ez_i\phi_i\le \zeta v$ with $\zeta=\zeta(n,p,C_1)>0$ sufficiently small.

		\smallskip

\noindent 
$\bullet$ {\it Step 2: proof of \eqref{eq:L2 conv p small} in the general case}.
Throughout this part, we assume that $\zeta=\zeta(n,p,C_1)>0$ is a small constant so that Step 1 applies.

	Observe that, by \eqref{S equ}, ${\zeta v}$ is a supersolution for the operator
	$${L}_v[\psi]:=-{\rm div}\left(\bigl( |Dv|+ |D\psi| \bigr)^{p-2}D\psi+(p-2)\bigl( |Dv|+ |D\psi| \bigr)^{p-3}|D\psi| D\psi \right),$$
	namely ${L}_v[\zeta v]\geq 0$.
	Therefore, multiplying ${L}_v[\zeta v]\geq 0$ by $\left(\ez_i \phi_i-{\zeta v}\right)_+$ and integrating by parts, we get
	\begin{multline}
	\label{eq:sub}
	\int_{\R^n} \bigl( |Dv|+\zeta  |Dv|\bigr)^{p-2} \zeta Dv \cdot D(\ez_i \phi_i-{\zeta v})_+ \,dx \\
	+(p-2)\int_{\R^n} \bigl( |Dv|+\zeta  |Dv|\bigr)^{p-3}\zeta^2 |Dv| Dv \cdot D(\ez_i \phi_i-{\zeta v})_+  \,dx\geq 0.
	\end{multline}
	Also, by the convexity of
	$$\R^n\ni z\mapsto F_t(z):= (t+|z|)^{p-2}|z|^2, \qquad t\ge 0,$$ 
	we have 
	$$
	F_t(z)+DF_t(z)\cdot(z'-z)\leq F_t(z')\qquad \forall\,z,z'\in \R^n,\,t\ge 0.
      $$
      Hence, applying this inequality with $t=|Dv|,$ $z=\zeta Dv$, and $z'=\ez_i D\phi_i$,
      it follows by \eqref{eq:sub} that
	\begin{equation}\begin{split}\label{phi1 large}
	&c(n,p)\ez_i^{-2} \int_{\{\ez_i \phi_i>\zeta v\}}  |Dv|^p\, dx\le \ez_i^{-2} \int_{\{\ez_i \phi_i>\zeta v\}}  \bigl( |Dv|+\zeta  |Dv|\bigr)^{p-2} \zeta^2 |Dv|^2\, dx \\
	&\qquad\le  \ez_i^{-2}  \int_{\{\ez_i \phi_i>\zeta v\}}  \bigl( |Dv|+\zeta  |Dv|\bigr)^{p-2} \zeta^2 |Dv|^2\, dx   \\
	& \qquad\qquad+ \ez_i^{-2}\int_{\{\ez_i \phi_i>\zeta v\}} \bigl( |Dv|+\zeta  |Dv|\bigr)^{p-2} \zeta Dv \cdot D(\ez_i \phi_i-{\zeta v})_+ \,dx\\
	& \qquad\qquad+\ez_i^{-2}(p-2)\int_{\{\ez_i \phi_i>\zeta v\}} \bigl( |Dv|+\zeta  |Dv|\bigr)^{p-3}\zeta^2 |Dv| Dv \cdot D(\ez_i \phi_i-{\zeta v})_+  \,dx \\
	&\qquad\le  \int_{\{\ez_i \phi_i>\zeta v\}}  \bigl(  |Dv|+ \ez_i|D\phi_i|\bigr)^{p-2}  |D\phi_i|^2\, dx.
	\end{split}
	\end{equation}
We now write $\phi_i=\phi_{i,1}+\phi_{i,2}$, where 
	$$\phi_{i,1}:=\min\left\{\phi_i,\,\frac {\zeta v} {\ez_i}\right\},\qquad \phi_{i,2}:=\phi_i-\phi_{i,1}.$$	Note that, as a consequence of \eqref{eq:weighted W12} and \eqref{phi1 large},
	\begin{multline}
	\label{eq:bound phi12}
	\int_{\mathbb R^n}  \bigl(  |Dv|+ \ez_i|D\phi_{i,1}|\bigr)^{p-2}  |D\phi_{i,1}|^2\, dx+
	\int_{\mathbb R^n}  \bigl(  |Dv|+ \ez_i|D\phi_{i,2}|\bigr)^{p-2}  |D\phi_{i,2}|^2\, dx\\
	\leq 
	C(n,p)\int_{\mathbb R^n}  \bigl(  |Dv|+ \ez_i|D\phi_i|\bigr)^{p-2}  |D\phi_i|^2\, dx\leq C(n,p).
	\end{multline}
	Hence, it follows by the analogue of \eqref{Lp phi} that
	\begin{equation}
	\label{eq:bound phi2}
	\int_{\mathbb R^n}  |D\phi_{i,1}|^p\,dx+\int_{\mathbb R^n}  |D\phi_{i,2}|^p\, dx\leq C(n,p).
	\end{equation}
	In particular we deduce that $\phi_{i,2} \rightharpoonup 0$ in $\dot W^{1,p}(\mathbb R^n)$ (as $|\{\ez_i \phi_i>\zeta v\}\cap B(0,R)|\to 0$ for any $R>1$)
	and that, up to a subsequence, both $\phi_i$ and $\phi_{i,1}$ converge weakly in $\dot W^{1,p}(\mathbb R^n)$ and also a.e. to the same function $\phi \in \dot W^{1,p}(\mathbb R^n)$.

	Let $\eta=\eta(n,p)>0$ be a small exponent to be fixed. We analyze two cases.

	\smallskip

\noindent 
{\it - Case 1.}
	If 	
	$$\int_{\{\ez_i \phi_i>\zeta v\}} |\phi_{i,1}|^{p^*}\, dx> \ez_i^{-\eta} \int_{\{\ez_i \phi_i>\zeta v\}}  \left(\phi_i-\frac {\zeta v}{\ez_i}\right)_+^{p^*}\, dx=\ez_i^{-\eta} \int_{\{\ez_i \phi_i>\zeta v\}} |\phi_{i,2}|^{p^*}\, dx,$$
	since $\phi$ is also the limit of $\phi_{i,1}$, 
	we can apply Step 1 to $\phi_{i,1}$ to deduce that  $\phi\in L^2(\R^n;v^{p^*-2})$ and
		$$ \int_{\mathbb R^n} \frac{(v+C_1\ez_i\phi_i)^{p^*}}{v^2+|\ez_i\phi_i|^2} |\phi_i|^2\, dx=\bigl(1+O(\ez_i^\eta)\bigr)\int_{\mathbb R^n}   \frac{(v+C_1\ez_i\phi_{i,1})^{p^*}}{v^2+|\ez_i\phi_{i,1}|^2} |\phi_{i,1}|^2\, dx \to \int_{\mathbb R^n}  v^{p^*-2} |\phi|^2\, dx,$$
	which proves \eqref{eq:L2 conv p small}.	
	
		\smallskip

\noindent 
{\it - Case 2.}
	Assume now that
	\begin{equation}\label{phi2 large}
	\int_{\{\ez_i \phi_i>\zeta v\}} |\phi_{i,1}|^{p^*}\, dx\le  \ez_i^{-\eta}  \int_{\{\ez_i \phi_i>\zeta v\}}  |\phi_{i,2}|^{p^*} \, dx.
	\end{equation}  
	We claim that
	\begin{equation}\label{phi2 inequ}
	\ez_i^{p^*-2}\int_{\mathbb R^n} |\phi_{i,2}|^{p^*} \, dx = O(\ez_i^{\eta}).
	\end{equation} 
	To prove this, denote $A_i:=\{\ez_i \phi_i>\zeta v\}$ and define
	$$E_i:=\left\{|D\phi_{i,2}|\le \frac {|Dv|}{\ez_i}\right\}\cap A_i,\qquad  F_i:=\left\{|D\phi_{i,2}|> \frac {|Dv|}{\ez_i}\right\}\cap A_i.$$
	Then, since $|Dv|+ \ez_i|D\phi_{i,2}| \leq 2|Dv|$ inside $E_i$,
	it follows by H\"older inequality that
		\begin{equation}
		\begin{split}
		 \label{two parts}
	&\int_{\mathbb R^n} |D\phi_{i,2}|^p\, dx = \int_{E_i} |D\phi_{i,2}|^p\, dx +\int_{F_i} |D\phi_{i,2}|^p\, dx \\
	&\qquad \leq \left( \int_{E_i} |Dv|^{p-2} |D\phi_{i,2}|^2\, dx\right)^{\frac p 2} \left(\int_{E_i} |Dv|^p\, dx\right)^{1-\frac p 2} +\int_{F_i} |D\phi_{i,2}|^p\, dx\\
		&\qquad \leq \left( 2^{2-p} \int_{E_i} \bigl(  |Dv|+ \ez_i|D\phi_{i,2}|\bigr)^{p-2} |D\phi_{i,2}|^2\, dx\right)^{\frac p 2} \left(\int_{E_i} |Dv|^p\, dx\right)^{1-\frac p 2} +\int_{F_i} |D\phi_{i,2}|^p\, dx\\
			&\qquad \leq C(n,p) \left( \int_{E_i} \bigl(  |Dv|+ \ez_i|D\phi_{i,2}|\bigr)^{p-2} |D\phi_{i,2}|^2\, dx\right)^{\frac p 2} \left(\int_{E_i} |Dv|^p\, dx\right)^{1-\frac p 2} +\int_{F_i} |D\phi_{i,2}|^p\, dx.
	\end{split}
	\end{equation}
Also, using \eqref{v and Dv} and \eqref{phi2 large} together with H\"older inequality (note that, since $1<p\leq \frac{2n}{n+2}$, we have $n\geq 3$) we get
	\begin{equation}
		\begin{split}
		 \label{two parts2}
	\int_{E_i} |Dv|^p\, dx &\le C(n,p) \int_{E_i} (1+|x|^{\frac p {p-1}})^{-n}|x|^{\frac p {p-1}}\, dx\\
	&\le C(n,p) \left(\int_{E_i}  \Big((1+|x|^{\frac p {p-1}})^{-n+1}|x|^{\frac p {p-1}}\Big)^{\frac {n}{n-2}} \, dx \right)^{1-\frac 2 n} \left(\int_{\R^n}  (1+|x|^{\frac p {p-1}})^{-\frac {n} {2}} \, dx \right)^{\frac {2}n}\\
	&\le C(n,p) \biggl(\int_{A_i}  \left(\frac{\ez_i\phi_i}{\zeta v}\right)^{p^*} \Big((1+|x|^{\frac p {p-1}})^{-n+1}|x|^{\frac p {p-1}}\Big)^{\frac {n}{n-2}} \, dx \biggr)^{1-\frac 2 n}\\
	&\le C(n,p)  \left( \ez_i^{p^*} \int_{A_i} |\phi_i|^{p^*}\, dx\right)^{{1-\frac 2 n}}\le C(n,p)  \left( \ez_i^{p^*-\eta} \int_{A_i} |\phi_{i,2}|^{p^*}\, dx\right)^{{1-\frac 2 n}},
		\end{split}
	\end{equation}
	where we used  that
	${\frac {np} {2(p-1)}}>n$ (since $p\leq \frac{2n}{n+2}<2$) and that 
	$$ v^{-p^*} \Big((1+|x|^{\frac p {p-1}})^{-n+1}|x|^{\frac p {p-1}}\Big)^{\frac {n}{n-2}} \le C(n,p).$$
	Therefore, introducing the notation
	$$
	N_{i,2}:=\int_{E_i} \bigl(  |Dv|+ \ez_i|D\phi_{i,2}|\bigr)^{p-2} |D\phi_{i,2}|^2\, dx,
	$$
	by Sobolev inequality,  \eqref{two parts}, and \eqref{two parts2}, we deduce that
	\begin{equation}
	\begin{split}\label{two parts 2}	 
	\ez_i^{p^*-2}\int_{\mathbb R^n} |\phi_{i,2}|^{p^*} \, dx &\le C(n,p)  \ez_i^{p^*-2}\left(\int_{\mathbb R^n} |D\phi_{i,2}|^{p} \, dx\right)^{\frac {p^*} p} \\
	&\le  C(n,p) \ez_i^{p^*-2}\biggl[ N_{i,2}^{\frac {p^*} 2}   \left(\int_{E_i} |Dv|^p\, dx\right)^{\frac {(2-p)p^*} {2p}} +\left( \int_{F_i} |D\phi_{i,2}|^p\, dx\right)^{\frac {p^*} p}\biggr]\\
	&\le C(n,p) \ez_i^{p^*-2}\biggl[ N_{i,2}^{\frac {p^*} 2}    \left(\ez_i^{p^*-\eta} \int_{A_i}  |\phi_{i,2}|^{p^*} \, dx\right)^{\frac {(2-p)(n-2)} {2(n-p)}} +  \int_{F_i} |D\phi_{i,2}|^p\, dx\biggr],
	\end{split}
	\end{equation}
	where in the last inequality we used \eqref{eq:bound phi2} and the fact that $\frac{p^*}{p}\geq 1$.
	
	Suppose first that 
	$$   \int_{F_i} |D\phi_{i,2}|^p\, dx\ge  N_{i,2}^{\frac {p^*} 2} \left(\ez_i^{p^*-\eta} \int_{A_i}  |\phi_{i,2}|^{p^*} \, dx\right)^{\frac {(2-p)(n-2)} {2(n-p)}}.$$
	Then, since $|Dv|\leq \ez_i |D\phi_{i,2}|\sim \ez_i|D\phi_{i}|$ inside $F_i$ (recall that $\zeta<1$), \eqref{eq:weighted W12} and \eqref{two parts 2}	yield
\begin{equation}
	\begin{split}
	\ez_i^{p^*-2}\int_{\mathbb R^n} |\phi_{i,2}|^{p^*} \, dx&\le C(n,p)\ez_i^{p^*-2}  \int_{F_i} |D\phi_{i,2}|^p\, dx
	  \\
	&= C(n,p)\ez_i^{p^*-p} \int_{F_i} \bigl( \ez_i |D\phi_{i,2}| \bigr)^{p-2}|D\phi_{i,2}|^2\, dx\\
	& \le C(n,p)\ez_i^{p^*-p} \int_{F_i} \bigl( |Dv|+\ez_i |D\phi_{i,2}| \bigr)^{p-2}|D\phi_{i,2}|^2\, dx , \label{phi21}
\end{split}
	\end{equation}
	which proves \eqref{phi2 inequ} choosing $\eta\leq p^*-p$ (recall \eqref{eq:bound phi12}).

	Consider instead the case
	$$   \int_{F_i} |D\phi_{i,2}|^p\, dx<   N_{i,2}^{\frac {p^*} 2}  \left(\ez_i^{p^*-\eta} \int_{A_i}  |\phi_{i,2}|^{p^*} \, dx\right)^{\frac {(2-p)(n-2)} {2(n-p)}},$$
	and set $\theta:=\frac {(2-p)(n-2)} {2(n-p)},$ so that \eqref{two parts 2}	yields
	\begin{multline*}
	\ez_i^{p^*-2}\int_{\mathbb R^n} |\phi_{i,2}|^{p^*} \, dx \leq C(n,p)\ez_i^{p^*-2} N_{i,2}^{\frac {p^*} 2} 
	\left(\ez_i^{p^*-\eta} \int_{A_i}  |\phi_{i,2}|^{p^*} \, dx\right)^{\theta}\\
	=C(n,p)\ez_i^{p^*-2+(2-\eta)\theta} N_{i,2}^{\frac {p^*} 2} \left(\ez_i^{p^*-2} \int_{A_i}  |\phi_{i,2}|^{p^*} \, dx\right)^{\theta}.
	\end{multline*}	 
	Since $\theta<1$, recalling the definition of $N_{i,2}$ and \eqref{eq:bound phi12}, this gives
	\begin{equation} \label{phi22}	\begin{split}
	\ez_i^{p^*-2}\int_{\mathbb R^n} |\phi_{i,2}|^{p^*} \, dx &\leq C(n,p)\ez_i^{\frac{p^*-2+(2-\eta)\theta}{1-\theta}} \biggl(\int_{E_i} \bigl(  |Dv|+ \ez_i|D\phi_{i,2}|\bigr)^{p-2} |D\phi_{i,2}|^2\, dx\biggr)^{\frac {p^*}{2(1-\theta)}}\\
	&\leq C(n,p)\ez_i^\eta \int_{E_i} \bigl(  |Dv|+ \ez_i|D\phi_{i,2}|\bigr)^{p-2} |D\phi_{i,2}|^2\, dx,
	\end{split} \end{equation}
	where the last inequality follows by choosing $\eta>0$ sufficiently small (notice that $p^*-2+2\theta>0$ and $\frac {p^*}{2(1-\theta)}>1$).
This proves \eqref{phi2 inequ} also in this case.
	
	\smallskip
	
		Now, combining \eqref{phi2 large} and \eqref{phi2 inequ}, we finally get
	\begin{multline*}
	\left|\int_{\mathbb R^n} \frac{(v+C_1\ez_i\phi_i)^{p^*}}{v^2+|\ez_i\phi_i|^2} |\phi_i|^2\, dx- \int_{\mathbb R^n}   \frac{(v+C_1\ez_i\phi_{i,1})^{p^*}}{v^2+|\ez_i\phi_{i,1}|^2} |\phi_{i,1}|^2\, dx\right|\\
	\le C(n,p,C_1) \left(\ez_i^{p^*-2} \int_{A_i}   |\phi_{i,2}|^{p^*}\, dx + \ez_i^2  \int_{A_i} \frac{(v+C_1\zeta v)^{p^*}}{v^2+|\zeta v|^2} |\zeta v|^2 \, dx\right)=o(1).
	\end{multline*}	 
	Thanks to this estimate, and since $\phi$ is also the limit of $\phi_{i,1}$, 
	applying
	Step 1 to $\phi_{i,1}$ we conclude the proof of the lemma.
\end{proof}
An important consequence of the proof of Lemma~\ref{weighted compact} is the following Orlicz-type Poincar\'e inequality: 

\begin{cor}
	\label{cor:weighted poincare}
	Let $1<p\le \frac {2n}{n+2}$.
	There exists $\ez_0=\ez_0(n,p)>0$ small such that the following holds: For any $\ez \in (0,\ez_0)$  and any $\phi\in \dot W^{1,p}(\mathbb R^n)\cap \dot W^{1,2}(\mathbb R^n;\, |Dv|^{p-2})$ with
	$$\int_{\mathbb R^n} \bigl( |Dv|+\ez|D\phi| \bigr)^{p-2} |D\phi|^2\, dx\le  1, $$
	we have
	\begin{equation}
	\label{weighted poincare}
	\int_{\mathbb R^n} (v+\ez \phi )^{p^*-2}|\phi|^2\, dx \le C(n,p) \int_{\mathbb R^n} \bigl( |Dv|+\ez|D\phi| \bigr)^{p-2} |D\phi|^2\, dx.
	\end{equation}
\end{cor}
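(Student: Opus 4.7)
The plan is to argue by contradiction, reducing the desired weighted Poincar\'e inequality to the convergence \eqref{eq:L2 conv p small} of Lemma \ref{weighted compact}. If the corollary fails, then for every choice of $C$ and $\ez_0$ there is a counterexample, so taking $C=i$ and $\ez_0=1/i$ yields sequences $\ez_i\to 0^+$ and $\phi_i\in \dot W^{1,p}(\rn)\cap \dot W^{1,2}(\rn;|Dv|^{p-2})$ satisfying
\begin{equation*}
\int_{\rn}\bigl(|Dv|+\ez_i|D\phi_i|\bigr)^{p-2}|D\phi_i|^2\,dx\le 1\quad\text{and}\quad\int_{\rn}(v+\ez_i\phi_i)^{p^*-2}|\phi_i|^2\,dx>i.
\end{equation*}
As in Lemma \ref{weighted compact}, since $|D|\phi_i||=|D\phi_i|$ almost everywhere, replacing $\phi_i$ by $|\phi_i|$ preserves the hypothesis, so we may assume $\phi_i\ge 0$ throughout; this also makes $v+\ez_i\phi_i$ nonnegative and all the powers well-defined.

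The second (purely algebraic) ingredient is a pointwise comparison between the integrand whose convergence is guaranteed by Lemma \ref{weighted compact} and the one appearing in \eqref{weighted poincare}. For nonnegative reals $a,b$ one has $\tfrac12(a+b)^2\le a^2+b^2\le (a+b)^2$, and since $p^*-2\le 0$ in the range $1<p\le \tfrac{2n}{n+2}$, raising to negative exponents flips the inequalities and yields
\begin{equation*}
(v+\ez_i\phi_i)^{p^*-2}\le \frac{(v+\ez_i\phi_i)^{p^*}}{v^2+\ez_i^2\phi_i^2}\le 2\,(v+\ez_i\phi_i)^{p^*-2}\qquad \text{pointwise a.e.}
\end{equation*}

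The gradient normalization now places us exactly in the setting of Lemma \ref{weighted compact}. Applying that lemma with $C_1=1$ produces a limit $\phi\in \dot W^{1,p}(\rn)\cap L^2(\rn;v^{p^*-2})$ with
\begin{equation*}
\int_{\rn}\frac{(v+\ez_i\phi_i)^{p^*}}{v^2+\ez_i^2\phi_i^2}|\phi_i|^2\,dx\longrightarrow \int_{\rn}v^{p^*-2}|\phi|^2\,dx<+\infty,
\end{equation*}
and combined with the pointwise comparison above this forces $\int_{\rn}(v+\ez_i\phi_i)^{p^*-2}|\phi_i|^2\,dx$ to stay uniformly bounded in $i$. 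This contradicts its divergence to $+\infty$ and proves the corollary.

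The genuine difficulty is already absorbed into Lemma \ref{weighted compact}; in particular, the delicate splitting $\phi_i=\phi_{i,1}+\phi_{i,2}$ carried out there to handle the region $\{\ez_i\phi_i>\zeta v\}$ is precisely what powers the present statement. Granted that lemma, the only mild additional observation needed is the pointwise identification $v^2+\ez_i^2\phi_i^2\sim (v+\ez_i\phi_i)^2$, which is straightforward once $\phi_i\ge 0$.
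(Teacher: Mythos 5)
Your argument is correct in substance, but it takes a genuinely different route from the paper. The paper proves the corollary \emph{quantitatively}, by reusing the machinery inside the proof of Lemma~\ref{weighted compact}: it splits $\phi=\phi_1+\phi_2$ at the level $\zeta v/\ez$, gets \eqref{weighted poincare} for $\phi_1$ from the Hardy--Poincar\'e estimate \eqref{poincare} (since $v\sim v+\ez\phi_1$ there), and handles $\phi_2$ by rerunning \eqref{phi21} and \eqref{phi22} with $\eta=0$, according to which of $\phi_1,\phi_2$ carries the larger $L^{p^*}$ mass on $\{\ez\phi>\zeta v\}$. You instead deduce the corollary \emph{softly} from the statement of Lemma~\ref{weighted compact} by compactness and contradiction, using only the elementary two-sided comparison $(v+\ez\phi)^{p^*-2}\le \frac{(v+\ez\phi)^{p^*}}{v^2+\ez^2\phi^2}\le 2\,(v+\ez\phi)^{p^*-2}$ for $\phi\ge 0$ (which, incidentally, has nothing to do with the sign of $p^*-2$: it is just $\tfrac12(v+\ez\phi)^2\le v^2+\ez^2\phi^2\le(v+\ez\phi)^2$). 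There is no circularity, since Lemma~\ref{weighted compact} nowhere uses the corollary, so your reduction is legitimate; what you lose relative to the paper is only that the constant and $\ez_0$ come out of a contradiction argument rather than being traceable through explicit estimates, and that you need the sequence extraction at all.

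One point needs fixing in your setup. The negation of the corollary with constant $i$ and threshold $1/i$ does \emph{not} give $\int(v+\ez_i\phi_i)^{p^*-2}|\phi_i|^2\,dx>i$; it only gives
$$
\int_{\R^n}(v+\ez_i\phi_i)^{p^*-2}|\phi_i|^2\,dx\;>\;i\int_{\R^n}\bigl(|Dv|+\ez_i|D\phi_i|\bigr)^{p-2}|D\phi_i|^2\,dx,
$$
and the right-hand integral, call it $G_i\le 1$, could tend to zero, in which case boundedness of the left-hand side (which is all that Lemma~\ref{weighted compact} plus your pointwise comparison yields) produces no contradiction. The fix is a normalization you should state explicitly: for $\lambda\ge 1$ the substitution $(\phi,\ez)\mapsto(\lambda\phi,\ez/\lambda)$ leaves $\ez\phi$ and $\ez D\phi$ unchanged, so it multiplies both sides of \eqref{weighted poincare} by $\lambda^2$ and decreases $\ez$; choosing $\lambda=G_i^{-1/2}\ge 1$ you may therefore assume $G_i=1$ exactly (the case $G_i=0$ forces $\phi_i\equiv 0$ and is trivial), after which the violation does read ``$>i$'' and your contradiction with the convergence \eqref{eq:L2 conv p small} (applied with $C_1=1$, along the subsequence provided by the lemma) goes through. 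With that one-line scaling inserted, the proof is complete; the reduction to $\phi_i\ge 0$ is the same convention the paper itself adopts, so it raises no additional issue.
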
 
\begin{proof}
As in the proof of Lemma~\ref{weighted compact}, it suffices to consider the case $\phi \geq 0$.
Also, let $\zeta\in (0,1)$ be the small constant provided in the proof of Lemma~\ref{weighted compact}.

Write $\phi=\phi_{1}+\phi_{2}$, where 
	$$\phi_{1}:=\min\left\{\phi,\,\frac {\zeta v} {\ez}\right\},\qquad \phi_2:=\phi-\phi_1.$$
	Since $\ez\phi_1\le \zeta v$ we have $v\sim v+\ez\phi_1$, so 
	\eqref{weighted poincare} for $\phi_1$ follows from the analogue of  \eqref{poincare}.
	
	For $\phi_2$ we discuss two cases. 	
	
	\smallskip
		
	If 	
	$$\int_{\{\ez \phi>\zeta v\}} |\phi_1|^{p^*}\, dx> \int_{\{\ez \phi>\zeta v\}}  \left(\phi-\frac {\zeta v}{\ez}\right)_+^{p^*}\, dx= \int_{\{\ez \phi>\zeta v\}} |\phi_{2}|^{p^*}\, dx,$$
	then  
	$$ \int_{\mathbb R^n}  \ez^{p^*-2} |\phi_2|^{p^*}\, dx \le C(n,p) \int_{\mathbb R^n} v^{p^*-2}|\phi_1|^2\, dx.$$
	Thus, applying \eqref{weighted poincare} to $\phi_1$, we conclude that
	\begin{align*}
	\int_{\mathbb R^n} (v+\ez\phi)^{p^*-2}|\phi|^2\, dx
	&\le  C(n,p) \int_{\mathbb R^n} v^{p^*-2}|\phi_1|^2\, dx \\
	& \le C(n,p) \int_{\mathbb R^n} \bigl( |Dv|+\ez|D\phi_1| \bigr)^{p-2}|D\phi_1|^2\, dx\\
	& \le  C(n,p) \int_{\mathbb R^n} \bigl( |Dv|+\ez|D\phi| \bigr)^{p-2} |D\phi|^2 \, dx,
	\end{align*}
	where the last step follows from the analogue of \eqref{eq:bound phi12}.
		
	\smallskip
		
	On the other hand, when
		$$\int_{\{\ez \phi>\zeta v\}} |\phi_1|^{p^*}\, dx\leq \int_{\{\ez \phi>\zeta v\}} |\phi_{2}|^{p^*}\, dx,$$
		we can repeat the proofs of \eqref{phi21} and \eqref{phi22} with $\eta=0$ to deduce the validity of 
	\eqref{weighted poincare} for $\phi_2$.
		
	\smallskip
		
		Thus, by \eqref{eq:bound phi12} for $\phi$, and \eqref{weighted poincare} for $\phi_1$ and $\phi_2$, we eventually obtain 
	\begin{align*}
	&\int_{\mathbb R^n} (v+\ez \phi )^{p^*-2}|\phi|^2\, dx \le C(n,p) \int_{\mathbb R^n} v^{p^*-2}|\phi_1|^2\, dx+ C(n,p) \int_{\mathbb R^n}  \ez^{p^*-2} |\phi_2|^{p^*} \, dx\\
	&\qquad \le  C(n,p) \int_{\mathbb R^n} \bigl( |Dv|+\ez|D\phi_1| \bigr)^{p-2}|D\phi_1|^2\, dx+ C(n,p) \int_{\R^n}  \bigl( |Dv|+\ez|D\phi_2| \bigr)^{p-2}|D\phi_2|^2\, dx\\
	&\qquad \le  C(n,p) \int_{\mathbb R^n} \bigl( |Dv|+\ez|D\phi| \bigr)^{p-2}|D\phi|^2\, dx,
	\end{align*}
	which concludes the proof of the  corollary.	
\end{proof}

\subsection{Spectral gap}
Let $v=v_{a_0,1,0}$ be as in the previous section, and 
recall that
$$T_v\mathcal M:={\rm span}\left\{v,\,\partial_b v,\,\partial_{x_1}v,\,\dots ,\,\partial_{x_n}v\right\},$$
which is a subspace of $L^2(\mathbb R^n;v^{p^*-2})$.

In \cite[Proposition 3.1]{FN2019} it is proved that, for $p>2$, $T_v\mathcal M$ generates the first and the second eigenspaces corresponding to the linearized $p$-Laplacian operator
$$
\mathcal{L}_v[\varphi]:=-{\rm div}\left(|Dv|^{p-2}D\varphi+(p-2)|Dv|^{p-4}(Dv\cdot D\varphi) Dv \right)
$$
on the space $L^2(\mathbb R^n;v^{p^*-2})$ (this operator has a discrete spectrum for any $1<p<n$, thanks to Proposition~\ref{compact embedding}). 
We now note that, as a consequence of Proposition~\ref{compact embedding}, the proof of \cite[Proposition 3.1]{FN2019} extends to the full range $1<p<n$.\footnote{As noted 
in Remark~\ref{rem:weight}, for $1<p\le \frac {n+2}{n+1}$ the weight $|Dv|^{p-2}$ is not integrable at the origin, so one has to rely on singular  Sturm-Liouville theory (see for instance \cite[Chapter 8]{Z2005}).}
This shows that
$T_v\mathcal M$ generates the first and the second eigenspaces corresponding to $\mathcal{L}_v$, and therefore functions orthogonal to $T_v\mathcal M$ enjoy a quantitative improvement in the Poincar\'e inequality induced by $\mathcal{L}_v$. More precisely, the following holds:

\begin{prop}\label{spectrum gap}
Given $1<p<n$, and any function $\varphi\in L^2(\mathbb R^n;v^{p^*-2})$ orthogonal to $ T_v\mathcal M$, there exists a constant $\lambda=\lambda(n,p)>0$ so that
$$\int_{\mathbb R^n} |Dv|^{p-2} |D\varphi|^2+(p-2)|Dv|^{p-4}|Dv\cdot D\varphi|^2\,dx\ge \bigl((p^*-1)S^p+2\lambda\bigr)\|v\|_{L^{p^*}(\mathbb R^n)}^{p-p^*} \int_{\mathbb R^n} v^{p^*-2}|\varphi|^2\, dx,$$
where $S=S(n,p)$ is the optimal Sobolev constant.
\end{prop}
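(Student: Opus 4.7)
The plan is to read the statement as a spectral gap for the self-adjoint operator $\mathcal L_v$ associated to the quadratic form
$$Q_v[\varphi]:=\int_{\R^n}\bigl(|Dv|^{p-2}|D\varphi|^2+(p-2)|Dv|^{p-4}|Dv\cdot D\varphi|^2\bigr)\,dx$$
on the Hilbert space $H:=\dot W^{1,2}(\R^n;|Dv|^{p-2})$, viewed as a densely defined form on $L^2(\R^n;v^{p^*-2})$. Pointwise one has
$$|Dv|^{p-2}|D\varphi|^2+(p-2)|Dv|^{p-4}|Dv\cdot D\varphi|^2\ge \min\{1,p-1\}\,|Dv|^{p-2}|D\varphi|^2,$$
so $Q_v$ is coercive on $H$; the induced Friedrichs extension produces a non-negative self-adjoint operator $\mathcal L_v$.

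First I would invoke Proposition~\ref{compact embedding}, which ensures the compact embedding $H\hookrightarrow L^2(\R^n;v^{p^*-2})$. By the spectral theorem for compact self-adjoint operators with compact resolvent, $\mathcal L_v$ has a purely discrete spectrum $0<\mu_1\le \mu_2\le \mu_3\le \ldots\to \infty$, whose eigenvalues are given by the Courant min-max formula on $H$.

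Next I would identify the lowest two eigenspaces with $T_v\mathcal M$. Differentiating equation~\eqref{S equ} along the three families $a\mapsto v_{a,1,0}$, $b\mapsto v_{a_0,b,0}$, $x_0\mapsto v_{a_0,1,x_0}$ shows that $v$ is an eigenfunction of $\mathcal L_v$ with eigenvalue $\mu_1:=S^p\|v\|_{L^{p^*}}^{p-p^*}$, while $\partial_b v,\partial_{x_1}v,\ldots,\partial_{x_n}v$ are eigenfunctions with eigenvalue $\mu_2:=(p^*-1)S^p\|v\|_{L^{p^*}}^{p-p^*}$. To show these exhaust the first two eigenspaces (i.e. that $\mu_3>\mu_2$), I would carry out the spherical harmonic decomposition $\varphi(r\theta)=\sum_{\ell\ge 0}f_\ell(r)Y_\ell(\theta)$, reducing the eigenvalue equation on each mode to a one-dimensional weighted Sturm-Liouville problem on $(0,\infty)$; inspecting the $\ell=0$ and $\ell=1$ radial problems (and checking that all higher $\ell$ contribute strictly larger eigenvalues) one concludes as in the proof of \cite[Proposition 3.1]{FN2019}. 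Setting $2\lambda:=\mu_3-\mu_2>0$, the min-max principle applied to any $\varphi\perp T_v\mathcal M$ in $L^2(\R^n;v^{p^*-2})$ gives
$$Q_v[\varphi]\ge \mu_3\int_{\R^n}v^{p^*-2}|\varphi|^2\,dx=\bigl((p^*-1)S^p+2\lambda\bigr)\|v\|_{L^{p^*}}^{p-p^*}\int_{\R^n}v^{p^*-2}|\varphi|^2\,dx.$$

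The main obstacle is that, for $1<p\le \frac{n+2}{n+1}$, the weight $|Dv|^{p-2}$ fails to be locally integrable at the origin, so the radial problems are singular at $r=0$ and the naive spectral theory does not apply. To handle this one works with the form domain built from $C^1_{c,0}(\R^n)$, as in Remark~\ref{rem:weight}, and appeals to the singular Sturm-Liouville framework of \cite[Chapter 8]{Z2005} to justify the diagonalization and to show that the limit-point condition at $r=0$ selects the admissible boundary behaviour. Once this technical point is handled, the computation of the first two eigenvalues and of the gap proceeds exactly as in \cite[Proposition 3.1]{FN2019}, so the remaining spectral analysis can be quoted verbatim.
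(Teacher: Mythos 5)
Your proposal follows essentially the same route as the paper: Proposition~\ref{compact embedding} yields discreteness of the spectrum of $\mathcal L_v$ on $L^2(\R^n;v^{p^*-2})$, the identification of the first two eigenspaces with $T_v\mathcal M$ is precisely the extension of \cite[Proposition 3.1]{FN2019} to all $1<p<n$ (with the singular Sturm--Liouville theory of \cite[Chapter 8]{Z2005} handling the non-integrable weight when $1<p\le\frac{n+2}{n+1}$, as in Remark~\ref{rem:weight}), and the gap $2\lambda=\mu_3-\mu_2>0$ then gives the stated inequality via min--max. One harmless slip: by homogeneity of the $p$-Laplacian the eigenvalue of $v$ is $(p-1)S^p\|v\|_{L^{p^*}(\R^n)}^{p-p^*}$ rather than $S^p\|v\|_{L^{p^*}(\R^n)}^{p-p^*}$, and the $(p^*-1)$-eigenspace is spanned by $\frac{n-p}{p}v+x\cdot Dv$ and the $\partial_{x_i}v$ (the norm-preserving dilation generator) rather than by $\partial_b v$ alone; neither point affects your conclusion, since only $\mu_2$, $\mu_3$, and the fact that the two lowest eigenspaces span $T_v\mathcal M$ are used.
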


The following remark will be important to give a meaning to the notion of ``orthogonality to $T_v\mathcal M$'' for functions which are not necessarily in $L^2(\mathbb R^n;v^{p^*-2})$. 

\begin{rem}
\label{rmk:orth}
For any $\xi\in T_v\mathcal M$ it holds $v^{p^*-2}\xi \in L^{\frac{p^*}{p^*-1}}(\R^n)=\bigl(L^{p^*}(\R^n)\bigr)'$.
Hence, by abuse of notation, for any function $\psi \in L^{p^*}(\R^n)$
we say that $\psi$ is orthogonal to 
$T_v\mathcal M$ in $L^2(\mathbb R^n;v^{p^*-2})$ if
$$
\int_{\R^n}v^{p^*-2}\xi\,\psi\,dx=0\qquad \forall\, \xi \in T_v\mathcal M.
$$
\end{rem}
Note that, by H\"older inequality, $L^{p^*}(\R^n)\subset L^2(\R^n;v^{p^*-2})$ if $p^* \geq 2$. Hence, the notion of orthogonality introduced above is particularly relevant when $p^*<2$ (equivalently, $p<\frac{2n}{n+2}$).
We also observe that, by Sobolev embedding, the previous remark gives a meaning to the orthogonality to $T_v\mathcal M$ for functions in $\dot W^{1,p}(\mathbb R^n)$.

The main result of this section is the following spectral gap-type estimate.

\begin{prop}\label{new spectrum gap}
Let $S=S(n,p)$ be  the optimal Sobolev constant, and let $\lambda=\lambda(n,p)>0$ be as in Proposition \ref{spectrum gap}.
For any $\gamma_0>0$ and $C_1>0$ there exists $\bar\delta=\bar\delta(n,p,\gamma_0,C_1)>0$ such that the following holds:

Let $\varphi\in \dot W^{1,p}(\mathbb R^n)$ be orthogonal to $ T_v\mathcal M$ in $L^2(\mathbb R^n;v^{p^*-2})$, with
$$\|\varphi\|_{\dot W^{1,p}(\mathbb R^n)}\le \bar\delta.$$
Then:\\
\noindent (i) when  $1<p\le \frac {2n}{n+2}$, we have
\begin{multline*}
	\int_{\mathbb R^n} |Dv|^{p-2} |D\varphi|^2+(p-2)|w|^{p-2}\bigl( |D(v+\varphi)|-|Dv| \bigr)^2 + \gamma_0\min\bigl\{ |D\varphi|^{p},\, |Dv|^{p-2} |D\varphi|^2 \bigr\}  \,dx \\
	 \ge \bigl((p^*-1)S^p+\lambda\bigr) \|v\|_{L^{p^*}(\mathbb R^n)}^{p-p^*} \int_{\mathbb R^n} \frac {(v  +C_1|\varphi|)^{p^*}}{v^2+|\varphi|^2}|\varphi|^2\, dx,
\end{multline*}
where 
 $w:\R^n \to \R^n$ is defined in analogy to Lemma~\ref{vector inequ}: 
$$
w = \left\{ \begin{array}{cl}
\left(\frac {|D(v+\varphi)|}{(2-p)|D(v+\varphi)|+(p-1)|Dv|}\right)^{\frac 1 {p-2}} Dv & \textrm{if  $|Dv|<|D(v+\varphi)|$}\\
 Dv & \textrm{if  $|D(v+\varphi)|\le |Dv|$}
\end{array} \right.;
$$

\noindent (ii) when  $\frac {2n}{n+2}< p< 2$, we have
\begin{multline*}
	\int_{\mathbb R^n} |Dv|^{p-2} |D\varphi|^2+(p-2)|w|^{p-2}\bigl( |D(v+\varphi)|-|Dv| \bigr)^2 + \gamma_0\min\bigl\{ |D\varphi|^{p},\, |Dv|^{p-2} |D\varphi|^2 \bigr\}  \,dx \\
	 \ge \bigl((p^*-1)S^p+\lambda\bigr) \|v\|_{L^{p^*}(\mathbb R^n)}^{p-p^*} \int_{\mathbb R^n} v^{p^*-2}|\varphi|^2\, dx,
\end{multline*}
where 
 $w:\R^n \to \R^n$ is defined in analogy to Lemma~\ref{vector inequ}: 
$$
w = \left\{ \begin{array}{cl}
\left(\frac {|D(v+\varphi)|}{(2-p)|D(v+\varphi)|+(p-1)|Dv|}\right)^{\frac 1 {p-2}} Dv & \textrm{if  $|Dv|<|D(v+\varphi)|$}\\
 Dv & \textrm{if  $|D(v+\varphi)|\le |Dv|$}
\end{array} \right.;
$$

\noindent (iii)
when $p\geq 2$, we have
$$\int_{\mathbb R^n} |Dv|^{p-2} |D\varphi|^2+(p-2)|w|^{p-2}\bigl( |D(v+\varphi)|-|Dv| \bigr)^2 \,dx \ge \bigl((p^*-1)S^p+\lambda\bigr)\|v\|_{L^{p^*}(\mathbb R^n)}^{p-p^*} \int_{\mathbb R^n} v^{p^*-2}|\varphi|^2\, dx,$$
where  $w:\R^n \to \R^n$ is defined in analogy to Lemma~\ref{vector inequ}: 
$$
w = \left\{ \begin{array}{cl}
	Dv & \textrm{if  $|Dv|<|D(v+\varphi)|$}\\
	\left(\frac{|D(v+\varphi)|}{|Dv|}\right)^{\frac 1 {p-2}}D(v+\varphi) & \textrm{if  $|D(v+\varphi)|\le |Dv|$}
\end{array} \right..
$$
\end{prop}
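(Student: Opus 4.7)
\textbf{Proof plan for Proposition \ref{new spectrum gap}.}
The plan is to argue by contradiction and compactness, combining the $c_0$ min-term from Lemma \ref{vector inequ} with the convergence result of Lemma \ref{weighted compact} and the linear spectral gap of Proposition \ref{spectrum gap}. I describe the strategy in the most delicate case (i); cases (ii) and (iii) follow the same scheme, using the compact embedding of Proposition \ref{compact embedding} in place of Lemma \ref{weighted compact} (and, in case (iii), the min-term is not needed since $p\ge 2$ makes both terms in the quadratic form already non-negative and directly bounding $\int |Dv|^{p-2}|D\phi|^2\,dx$). Suppose the conclusion of (i) fails: there exist $\bar\delta_i\downarrow 0$ and $\varphi_i\in \dot W^{1,p}(\mathbb R^n)$ orthogonal to $T_v\mathcal M$ in $L^2(v^{p^*-2})$, with $\|\varphi_i\|_{\dot W^{1,p}}\le \bar\delta_i$, violating the desired inequality. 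Define
\[
\ez_i^2:=\int_{\mathbb R^n}\frac{(v+C_1|\varphi_i|)^{p^*}}{v^2+|\varphi_i|^2}|\varphi_i|^2\,dx,\qquad \phi_i:=\varphi_i/\ez_i,
\]
so that $\ez_i\to 0$ (by Sobolev embedding and Corollary \ref{cor:weighted poincare}) and the failed inequality, after dividing by $\ez_i^2$, reads $L_i<((p^*-1)S^p+\lambda)\|v\|_{L^{p^*}}^{p-p^*}$, where $L_i$ is the rescaled left-hand side and the rescaled right-hand side equals $1$ by construction.

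\emph{Step 1: a uniform weighted $W^{1,2}$ bound.} The first and crucial point is that $L_i$ controls $\int(|Dv|+\ez_i|D\phi_i|)^{p-2}|D\phi_i|^2\,dx$ from above. Using the lower bound \eqref{eq:lower G}, in the regime $\ez_i|D\phi_i|\le |Dv|$ one has $|Dv|^{p-2}|D\phi_i|^2\sim(|Dv|+\ez_i|D\phi_i|)^{p-2}|D\phi_i|^2$ and the linearized terms alone suffice. In the opposite regime $\ez_i|D\phi_i|>|Dv|$ the min-term equals $\gamma_0\ez_i^{p-2}|D\phi_i|^p\sim\gamma_0(|Dv|+\ez_i|D\phi_i|)^{p-2}|D\phi_i|^2$, and this is precisely the role of the extra $c_0$-term produced by Lemma \ref{vector inequ}. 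Combining the two regimes gives $\int(|Dv|+\ez_i|D\phi_i|)^{p-2}|D\phi_i|^2\,dx\le C(n,p,\gamma_0)$, so the hypotheses of Lemma \ref{weighted compact} are met.

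\emph{Step 2: compactness and passage to the limit.} Applying Lemma \ref{weighted compact} one extracts a subsequence such that $\phi_i\rightharpoonup\phi$ weakly in $\dot W^{1,p}(\mathbb R^n)$ and a.e., and
\[
\int_{\mathbb R^n}\frac{(v+C_1\ez_i|\phi_i|)^{p^*}}{v^2+\ez_i^2|\phi_i|^2}|\phi_i|^2\,dx\longrightarrow \int_{\mathbb R^n} v^{p^*-2}|\phi|^2\,dx,
\]
which by our normalization forces $\int v^{p^*-2}|\phi|^2\,dx=1$, so in particular $\phi\not\equiv 0$ and $\phi\in L^2(v^{p^*-2})$. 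Since each $v^{p^*-2}\xi$ with $\xi\in T_v\mathcal M$ lies in $L^{p^*/(p^*-1)}$ and $\phi_i\rightharpoonup\phi$ in $L^{p^*}$ by Sobolev embedding, the orthogonality relation $\phi\perp T_v\mathcal M$ in $L^2(v^{p^*-2})$ (in the sense of Remark \ref{rmk:orth}) passes to the limit. For $L_i$ I invoke Fatou's lemma on the non-negative integrand
\[
|Dv|^{p-2}|D\phi_i|^2+(p-2)\ez_i^{-2}|w_i|^{p-2}\bigl(|D(v+\ez_i\phi_i)|-|Dv|\bigr)^2+\gamma_0\min\bigl\{\ez_i^{p-2}|D\phi_i|^p,\,|Dv|^{p-2}|D\phi_i|^2\bigr\},
\]
which is pointwise non-negative by Step (i)-2 of Lemma \ref{vector inequ}. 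Since $|w_i|\to|Dv|$ a.e., $(|D(v+\ez_i\phi_i)|-|Dv|)/\ez_i\to (Dv\cdot D\phi)/|Dv|$, and (for $p<2$) $\ez_i^{p-2}\to\infty$ makes the min converge to $|Dv|^{p-2}|D\phi|^2$, Fatou yields
\[
\liminf_i L_i\ge \int_{\mathbb R^n}|Dv|^{p-2}|D\phi|^2+(p-2)|Dv|^{p-4}|Dv\cdot D\phi|^2\,dx.
\]

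\emph{Step 3: concluding via the spectral gap.} Proposition \ref{spectrum gap} applied to $\phi$ gives that the right-hand side above is bounded below by $((p^*-1)S^p+2\lambda)\|v\|_{L^{p^*}}^{p-p^*}\int v^{p^*-2}|\phi|^2\,dx=((p^*-1)S^p+2\lambda)\|v\|_{L^{p^*}}^{p-p^*}$, which contradicts $\liminf L_i\le ((p^*-1)S^p+\lambda)\|v\|_{L^{p^*}}^{p-p^*}$. \emph{Main obstacle.} The hardest part is Step 1 in case (i): the lower bound \eqref{eq:lower G} alone provides only a weighted $W^{1,1}$-type control in the regime $\ez_i|D\phi_i|\gg|Dv|$, which is insufficient both to invoke Lemma \ref{weighted compact} and to drive the compactness argument. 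It is precisely the extra $c_0\min\{|y|^p,|x|^{p-2}|y|^2\}$ produced by Lemma \ref{vector inequ} that upgrades this to a full weighted $W^{1,2}$ bound. Verifying that the min-term converges to $|Dv|^{p-2}|D\phi|^2$ in the limit, and that this combines cleanly with Fatou on the (signed for $p<2$) linearized quadratic expression, is the other point where care is needed; the non-negativity of the whole integrand, guaranteed by the proof of Lemma \ref{vector inequ}, is what makes the argument go through.
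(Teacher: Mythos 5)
Your overall architecture --- argue by contradiction, normalize so that the relevant quantity is of order one, extract in Step 1 the uniform bound $\int(|Dv|+\ez_i|D\phi_i|)^{p-2}|D\phi_i|^2\,dx\le C$ from \eqref{eq:lower G} plus the $\gamma_0$ min-term, invoke Lemma~\ref{weighted compact}, pass the orthogonality to the limit via Remark~\ref{rmk:orth}, and conclude with Proposition~\ref{spectrum gap} --- matches the paper's proof (the paper normalizes by the weighted gradient quantity rather than by the right-hand side, an inessential difference). The genuine gap is in your Step 2: you invoke Fatou's lemma on the integrand containing $D\phi_i$, claiming $(|D(v+\ez_i\phi_i)|-|Dv|)/\ez_i\to (Dv\cdot D\phi)/|Dv|$ pointwise and that the min-term converges pointwise to $|Dv|^{p-2}|D\phi|^2$. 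But Lemma~\ref{weighted compact} (and your Step 1) only give weak convergence of $D\phi_i$ in $\dot W^{1,p}$, together with a.e.\ convergence of the functions $\phi_i$ --- not of their gradients. For an arbitrary bounded sequence there is no a.e.\ (or in measure) convergence of $D\phi_i$, so Fatou simply does not apply to these integrands. Nor can you fall back on standard weak lower semicontinuity, because for $p<2$ the quadratic form carries the sign-indefinite coefficient $(p-2)$ and the $i$-dependent weight $|w_i|^{p-2}$.

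This liminf step is precisely the point the paper flags as delicate. Its proof restricts to the sets $\mathcal R_{i,R}=\{2|Dv|\ge |D\varphi_i|\}\cap\bigl(B(0,R)\setminus B(0,1/R)\bigr)$, shows $|\mathcal S_{i,R}|\to 0$, deduces $D\hat\varphi_i\chi_{\mathcal R_{i,R}}\rightharpoonup D\hat\varphi\,\chi_{B(0,R)\setminus B(0,1/R)}$ in $L^2$, writes $\hat\varphi_i=\hat\varphi+\psi_i$ and expands the quadratic form: the pure $\psi_i$ block $|Dv|^{p-2}|D\psi_i|^2+(p-2)|w_i|^{p-2}f_{i,2}^2$ is nonnegative by \eqref{eq:lower G} and is discarded, while the remaining and cross terms pass to the limit by combining the a.e.\ convergence $|w_i|\to|Dv|$ (which does hold, since $D\varphi_i\to 0$ strongly) with the domination $|w_i|^{p-2}\le C(p)|Dv|^{p-2}$, the strong $L^2_{\rm loc}$ convergence of $f_{i,1}$, and the weak convergence of $f_{i,2}\chi_{\mathcal R_i}$ and $D\psi_i\chi_{\mathcal R_i}$. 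Without an argument of this type (or some other mechanism replacing pointwise convergence of gradients) your Step 2 is incomplete, and the same issue carries over to your sketches for cases (ii) and (iii), where the paper again uses the decomposition rather than Fatou; in case (ii) the compactness is obtained by H\"older and Sobolev (giving strong $L^2(\R^n;v^{p^*-2})$ convergence, since $p^*>2$) rather than by Proposition~\ref{compact embedding}, which is the route used only for $p\ge 2$.
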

\begin{proof}
We can assume that $\|v\|_{L^{p^*}(\mathbb R^n)}=1,$ as the general case follows by a scaling. 
Also, as in the proof of Lemma~\ref{weighted compact}, it suffices to consider the case $\varphi \geq 0$.

We argue by contradiction in all three cases. 

\smallskip

\noindent
$\bullet$ {\it The case $1< p\le \frac {2n}{n+2}$}.
Suppose the inequality does not hold. Then there exists  a sequence $0\not\equiv \varphi_i\to 0$ in $\dot W^{1,p}(\mathbb R^n)$, with $\varphi_i$ orthogonal to $ T_v\mathcal M$, such that
\begin{multline}
	\int_{\mathbb R^n} |Dv|^{p-2} |D\varphi_i|^2+(p-2)|w_i|^{p-2}\bigl(|D(v+\varphi_i)|-|Dv|\bigr)^2 + \gamma_0\min\bigl\{ |D\varphi_i|^{p},\, |Dv|^{p-2} |D\varphi_i|^2 \bigr\}  \,dx \\
	 < \bigl((p^*-1)S^p+\lambda\bigr) \int_{\mathbb R^n} \frac {(v +C_1\varphi_i)^{p^*}}{v^2+|\varphi_i|^2}|\varphi_i|^2\, dx,\label{eq:contradict 1 p}
\end{multline}
where $w_i$ corresponds to $\varphi_i$ as in  the statement.

Let $$
\ez_i:=\biggl(\int_{\mathbb R^n} \bigl(|Dv|+|D\varphi_i|\bigr)^{p-2} |D\varphi_i|^2 \, dx\biggr)^{\frac12},
$$
and set $\hat \varphi_i:= \frac{\varphi_i}{\ez_i}.$ 
Since $p<2$ it holds
$$\int_{\mathbb R^n} \bigl(|Dv|+|D\varphi_i|\bigr)^{p-2} |D\varphi_i|^2 \, dx\le 
\int_{\mathbb R^n} |D\varphi_i|^{p-2} |D\varphi_i|^2 \, dx=\int_{\mathbb R^n} |D \varphi_i|^p\, dx \to 0,$$
and hence $\ez_i\to 0. $

For any $R>1$, set
$$ \mathcal R_{i} :=  \{2 |Dv|\ge |D\varphi_i|\} ,\qquad \mathcal S_{i}:=\{2|Dv|< |D\varphi_i|\},
$$
$$ \mathcal R_{i,R}:=\bigl(B(0,R)\setminus B(0,1/R)\bigr) \cap \mathcal R_{i},\qquad \mathcal S_{i,R}:=\bigl(B(0,R)\setminus B(0,1/R)\bigr) \cap \mathcal S_{i}.$$
Since the integrand in the left hand side of \eqref{eq:contradict 1 p} is nonnegative (see \eqref{eq:lower G}),
we deduce that
\begin{multline}
\int_{B(0,R)\setminus B(0,1/R)} |Dv|^{p-2} |D\hat \varphi_i|^2+(p-2)|w_i|^{p-2}\left(\frac {|Dv +D\varphi_i| - |Dv|}{\ez_i} \right)^2 \\
+ \gamma_0\min\bigl\{ \ez_{i}^{p-2}|D\hat \varphi_i|^{p},\, |Dv|^{p-2} |D\hat \varphi_i|^2 \bigr\}\,dx
\le \bigl((p^*-1)S^p+\lambda\bigr) \int_{\mathbb R^n}  \frac {(v  +C_1 \varphi_i)^{p^*}}{v^2+|\varphi_i|^2}|\hat \varphi_i|^2\, dx \label{contradiction 1 p}
\end{multline}
for any $R>1$. 
Also, by \eqref{eq:lower G},
\begin{multline*}
|Dv|^{p-2} |D\hat \varphi_i|^2+(p-2)|w_i|^{p-2}\left(\frac {|Dv +D\varphi_i| - |Dv|}{\ez_i} \right)^2\\
\ge c(p)\frac{|Dv|}{|Dv|+|D \varphi_i|} |Dv|^{p-2}|D\hat \varphi_i|^2
\ge  c(p) |Dv|^{p-2}|D\hat \varphi_i|^2 \quad \text{on $\mathcal R_{i,R}$.}
\end{multline*} 
Thus, combining this bound with \eqref{contradiction 1 p}, we get
\begin{equation}
\begin{split}
&  c(p)\int_{\mathcal R_{i,R}} |Dv|^{p-2}|D\hat \varphi_i|^2\, dx +\gamma_0  \int_{\mathcal S_{i,R}}\ez_{i}^{p-2}|D\hat \varphi_i|^{p} \, dx  \\ 
&\qquad \le \int_{B(0,R)\setminus B(0,1/R)} |Dv|^{p-2} |D\hat \varphi_i|^2+(p-2)|w_i|^{p-2}\left(\frac {|Dv +D\varphi_i| - |Dv|}{\ez_i} \right)^2 \\ 
& \qquad \qquad \qquad \qquad\qquad \qquad \qquad + \gamma_0\min\bigl\{ \ez_{i}^{p-2}|D\hat \varphi_i|^{p},\, |Dv|^{p-2} |D\hat \varphi_i|^2 \bigr\}\,dx 
\\ 
&\qquad \le \bigl((p^*-1)S^p+\lambda\bigr) \int_{\mathbb R^n}  \frac {(v  +C_1 \varphi_i)^{p^*}}{v^2+|\varphi_i|^2}|\hat \varphi_i|^2\, dx. \label{RS 1 p}
\end{split}
\end{equation}
In particular, this implies that
\begin{multline}
\label{RS 1 p 2}
1=\ez_i^{-2}  \int_{\mathbb R^n} \bigl(|Dv|+|D\varphi_i|\bigr)^{p-2} |D \varphi_i|^2\, dx \\
\le C(p) \left[ \int_{\mathcal R_{i}} |Dv|^{p-2}|D\hat \varphi_i|^2\, dx +   \int_{\mathcal S_{i}}\ez_{i}^{p-2}|D\hat \varphi_i|^{p} \, dx \right]\\
\le C(n,p,\gamma_0) \bigl((p^*-1)S^p+\lambda\bigr) \int_{\mathbb R^n} \frac {(v  +C_1 \varphi_i)^{p^*}}{v^2+|\varphi_i|^2}|\hat \varphi_i|^2 \, dx.
\end{multline}
Furthermore, thanks to Corollary~\ref{cor:weighted poincare}, for $i$ large enough (so that $\ez_i\leq \ez_0$) we have
\begin{multline}
\label{eq:unif L2 weight}
\int_{\mathbb R^n}  \frac {(v  +C_1 \varphi_i)^{p^*}}{v^2+|\varphi_i|^2}|\hat \varphi_i|^2\, dx\le C(n,p,C_1) \int_{\mathbb R^n}   (v +|\varphi_i|)^{p^*-2} |\hat \varphi_i|^2\, dx\\
\le C(n,p,C_1)\int_{\mathbb R^n}   \bigl( |Dv| +|D\varphi_i| \bigr)^{p-2} |D\hat \varphi_i|^2\, dx\leq C(n,p,C_1).
\end{multline}
Hence, 
combining \eqref{RS 1 p} with \eqref{eq:unif L2 weight}, by the definition of $\mathcal S_{i,R}$ we get
$$
\ez_i^{-2}\int_{\mathcal S_{i,R}}|Dv|^p\,dx\leq \ez_i^{p-2}\int_{\mathcal S_{i,R}}|D\hat\varphi_i|^p\,dx\leq C(n,p,C_1),
$$
and since $|Dv|$ is uniformly bounded away from zero inside $B(0,R)\setminus B(0,1/R)$, 
we conclude that
\begin{equation}
\label{zero measure 1 p} 
|\mathcal S_{i,R}|\to 0  \qquad \text{as }i\to \infty,\quad \forall \,R>1.  
\end{equation}
Now, according to Lemma~\ref{weighted compact}, we have that
$\hat \varphi_i$ converges weakly in $\dot W^{1,p}(\R^n)$ to some function $\hat \varphi \in \dot W^{1,p}(\mathbb R^n)\cap L^2(\R^n,v^{p^*-2})$, and
\begin{equation}
\label{strong convergence}
\int_{\mathbb R^n} \frac {(v  +C_1 \varphi_i)^{p^*}}{v^2+|\varphi_i|^2}|\hat\varphi_i|^2\, dx\to \int_{\mathbb R^n} v^{p^*-2} |\hat \varphi|^2.
\end{equation}
Also, using again \eqref{RS 1 p} and \eqref{eq:unif L2 weight},
$$\int_{\mathcal R_{i,R}} |Dv|^{p-2}|D\hat \varphi_i|^2\, dx \leq C(n,p,C_1),$$
therefore \eqref{zero measure 1 p} and the weak convergence of $\hat\varphi_i$ to $\hat \varphi $ in $\dot W^{1,p}(\R^n)$ imply that, up to a subsequence,
$$D\hat \varphi_i\chi_{\mathcal R_{i,R}}\rightharpoonup D\hat \varphi\chi_{B(0,R)\setminus B(0,1/R)}\quad \text{in $L^2(\mathbb R^n,\mathbb R^n),$}\qquad \forall\,R>1. $$
In particular, $\hat\varphi \in \dot W^{1,2}_{\rm loc}(\R^n\setminus \{0\})$.
In addition, letting $i\to \infty$ in \eqref{RS 1 p 2} and \eqref{eq:unif L2 weight}, and using  \eqref{strong convergence}, we deduce that 
\begin{equation}
\label{nontrivial}0<c(n,p,\gamma_0)\leq \|\hat \varphi\|_{L^2(\mathbb R^n;v^{p^*-2})}\le C(n,p,C_1). 
\end{equation}
Let us write 
$$\hat \varphi_i =\hat \varphi +\psi_{i},\qquad \text{with }\psi_i:=\hat \varphi_i -\hat \varphi,$$
 so that
 $$
  \psi_{i}\rightharpoonup 0  \ \text{ in $\dot W^{1,p}(\R^n )$}\qquad\text{ and }  \qquad D\psi_{i} \chi_{\mathcal R_{i}}\rightharpoonup 0 \ \text{ in $L^2_{\rm loc}( \R^n\setminus\{0\},\R^n).$}
$$
We now look at the left hand side of \eqref{contradiction 1 p}.

The strong $\dot W^{1,p}(\mathbb R^n)$ convergence of $\varphi_i$ to $0$ implies that,
up to a subsequence, 
$|w_i| \to |Dv|$ a.e.
Also, we can rewrite 
\begin{multline*}
\left(\frac {|Dv +D\varphi_i| - |Dv|}{\ez_i} \right)^2=\biggl(\biggl[\int_0^1\frac{Dv+tD\varphi_i}{|Dv+tD\varphi_i|}\,dt\biggr]\cdot D\hat\varphi_i\biggr)^2\\
=\biggl(\biggl[\int_0^1\frac{Dv+tD\varphi_i}{|Dv+tD\varphi_i|}\,dt\biggr]\cdot \bigl(D\hat\varphi+D\psi_{i}\bigr)\biggr)^2.
\end{multline*}
Hence, if we set
$$
f_{i,1}:=\biggl[\int_0^1\frac{Dv+tD\varphi_i}{|Dv+tD\varphi_i|}\,dt\biggr]\cdot D\hat\varphi,\qquad f_{i,2}:=\biggl[\int_0^1\frac{Dv+tD\varphi_i}{|Dv+tD\varphi_i|}\,dt\biggr]\cdot D\psi_{i},
$$
since $\frac{Dv+tD\varphi_i}{|Dv+tD\varphi_i|} \to \frac{Dv}{|Dv|}$ a.e.,
it follows from Lebesgue's dominated convergence theorem  that
$$
f_{i,1} \to \frac{Dv}{|Dv|}\cdot D\hat \varphi \text{ strongly in $L^{2}_{\rm loc}(\R^n \setminus \{0\})$,}\qquad 
f_{i,2} \chi_{\mathcal R_{i}} \rightharpoonup 0 \text{ weakly in $L^{2}_{\rm loc}(\R^n \setminus \{0\})$.}
$$
Thus, we can control the first two terms of the left hand side of \eqref{contradiction 1 p} from below as follows:
\begin{equation}
\begin{split}
&\int_{\mathcal R_{i,R}} |Dv|^{p-2} |D\hat \varphi_i|^2+(p-2)|w_i|^{p-2}\left(\frac {|Dv +D\varphi_i| - |Dv|}{\ez_i} \right)^2\\
&\qquad= \int_{\mathcal R_{i,R}} |Dv|^{p-2} \Bigl( |D\hat\varphi|^2 +2D\psi_{i}\cdot D\hat\varphi \Bigr)+(p-2)|w_i|^{p-2}\left(f_{i,1}^2+2f_{i,1} f_{i,2}  \right)\\
&\qquad\qquad+\int_{\mathcal R_{i,R}} |Dv|^{p-2}  |D\psi_{i}|^2+(p-2)     |w_i|^{p-2}f_{i,2}^2\\
&\qquad\geq \int_{\mathcal R_{i,R}} |Dv|^{p-2} \Bigl( |D\hat\varphi|^2 +2D\psi_{i}\cdot D\hat\varphi \Bigr)+(p-2)|w_i|^{p-2}\left(f_{i,1}^2+2f_{i,1} f_{i,2}  \right),\label{eq:BR 1 p}
\end{split}
\end{equation}
where the last inequality follows from the nonnegativity of $|Dv|^{p-2}  |D\psi_{i}|^2+(p-2)     |w_i|^{p-2}f_{i,2}^2$ (thanks to \eqref{eq:lower G} and the fact that $f_{i,2}^2\leq  |D\psi_{i}|^2$).

Then, combining the convergences
$$
 D\psi_{i} \chi_{\mathcal R_{i}} \rightharpoonup 0,\quad f_{i,1}\to \frac{Dv}{|Dv|}\cdot D\hat \varphi,\quad
f_{i,2} \chi_{\mathcal R_{i}} \rightharpoonup 0 \qquad  \text{ in $L^{2}_{\rm loc}(\R^n \setminus \{0\})$,} $$
$$|w_i| \to |Dv|  \text { a.e.},\qquad  | \left(B(0,R)\setminus B(0,1/R)\right)\setminus \mathcal R_{i,R} | \to 0,
$$
with the fact that 
$$ |w_i|^{p-2}\le C(p) |Dv|^{p-2},$$
by Lebesgue's dominated convergence theorem we deduce that the last term in \eqref{eq:BR 1 p}  converges to 
$$
\int_{B(0,R)\setminus B(0,1/R)} |Dv|^{p-2} |D\hat \varphi|^2+(p-2)|Dv|^{p-2}\left(\frac{Dv}{|Dv|}\cdot D\hat \varphi\right)^2\,dx .
$$
Recalling \eqref{contradiction 1 p} and \eqref{strong convergence}, 
since $R>1$ is arbitrary and the integrand is nonnegative, this proves that
\begin{equation}
\label{eq:final}
\int_{\R^n} |Dv|^{p-2} |D\hat \varphi|^2+(p-2)|Dv|^{p-2}\left(\frac{Dv}{|Dv|}\cdot D\hat \varphi\right)^2\,dx \leq 
\bigl((p^*-1)S^p+\lambda\bigr) \int_{\mathbb R^n} v^{p^*-2}|\hat \varphi|^2\, dx.
\end{equation}
On the other hand, $\hat\varphi$ being the weak limit of $\hat\varphi_i$ in $\dot W^{1,p}(\R^n)$, it follows that $\hat\varphi_i\rightharpoonup \hat\varphi$ in $L^{p^*}(\R^n)$.
Hence, thanks to Remark~\ref{rmk:orth},
the orthogonality of $\varphi_i$ (and so of $\hat\varphi_i$) implies that also $\hat \varphi$ is orthogonal to $ T_v\mathcal M$. 
Since $\hat\varphi \in L^2(\mathbb R^n;v^{p^*-2})$, \eqref{nontrivial} and \eqref{eq:final} contradict Proposition \ref{spectrum gap}, concluding the proof.

\medskip

\noindent
$\bullet$ {\it The case $\frac {2n}{n+2}<  p<2$}.
The proof is very similar to the previous case, except for some small changes and a couple of different estimates.

If the statement fails, then there exists  a sequence $0\not\equiv \varphi_i\to 0$ in $\dot W^{1,p}(\mathbb R^n)$, with $\varphi_i$ orthogonal to $ T_v\mathcal M$, such that
\begin{multline}
\int_{\mathbb R^n} |Dv|^{p-2} |D\varphi_i|^2+(p-2)|w_i|^{p-2}\bigl(|D(v+\varphi_i)|-|Dv|\bigr)^2  + \gamma_0\min\bigl\{ |D\varphi_i|^{p},\, |Dv|^{p-2} |D\varphi_i|^2 \bigr\} \,dx\\
< \bigl((p^*-1)S^p+\lambda\bigr) \int_{\mathbb R^n} v^{p^*-2}|\varphi_i|^2\, dx,\label{eq:contradict 1p2}
\end{multline}
where $w_i$ corresponds to $\varphi_i$ as in  the statement.

As in the case $p\leq \frac{2n}{n+2}$,
we define
$$\ez_i:= \left( \int_{\mathbb R^n} \bigl(|Dv|+|D\varphi_i|\bigr)^{p-2} |D\varphi_i|^2\, dx \right)^{\frac 1 2},\qquad \hat \varphi_i=\frac {\varphi_i} {\ez_i},
$$
and we split $B(0,R)\setminus B(0,1/R)=\mathcal R_{i,R}\cup \mathcal S_{i,R}$.

Then, the analogues of \eqref{RS 1 p} and \eqref{RS 1 p 2} hold also in this case, with the only difference that the last term in both equations now becomes $\bigl((p^*-1)S^p+\lambda\bigr)\int_{\mathbb R^n} v^{p^*-2}|\hat\varphi_i|^2\, dx$.

We now observe that,
thanks to H\"older inequality, we have
\begin{multline*}
 \int_{\mathbb R^n}  |D\hat \varphi_i|^p\, dx\leq 
 \left( \int_{\mathbb R^n} \bigl(|Dv|+|D\varphi_i|\bigr)^{p-2}|D\hat \varphi_i|^2\, dx\right)^{\frac  p 2}  \left(\int_{\mathbb R^n} \bigl(|Dv|+|D\varphi_i|\bigr)^p dx\right)^{1-\frac  {p} 2} \\
 = \left(\int_{\mathbb R^n} \bigl(|Dv|+|D\varphi_i|\bigr)^p dx\right)^{1-\frac  {p} 2} \leq C(p)\biggl[\left(\int_{\mathbb R^n} |Dv|^p dx\right)^{1-\frac  {p} 2}+\ez_i^{\frac{p(2-p)}{2}} \left(\int_{\mathbb R^n} |D\hat\varphi_i|^p dx\right)^{1-\frac  {p} 2}\biggr],
\end{multline*}
from which it follows that
\begin{equation}
\label{uniform bound}
 \int_{\mathbb R^n}  |D\hat \varphi_i|^p\, dx\leq C(n,p).
\end{equation} 
Thus, 
up to a subsequence,  $\hat\varphi_i \to \hat \varphi $ weakly in $\dot W^{1,p}(\R^n)$
and
strongly in $L^2_{\rm loc}(\mathbb R^n)$ (note that now $p^*> 2$). In addition, H\"older and Sobolev inequalities, together with \eqref{uniform bound}, yield 
\begin{align*}
\int_{\mathbb R^n\setminus B(0,\rho)} v^{p^*-2}|\hat \varphi_i|^2\, dx & \le  \biggl(\int_{\mathbb R^n\setminus B(0,\,\rho)}v^{p^*}\,dx \biggr)^{1-\frac 2 {p^*}} \biggl(\int_{\mathbb R^n\setminus B(0,\,\rho)}|\hat \varphi_i|^{p^*}\,dx \biggr)^{\frac 2 {p^*}}\\
& \le  \biggl(\int_{\mathbb R^n\setminus B(0,\,\rho)}v^{p^*}\,dx \biggr)^{1-\frac 2 {p^*}} \left(\int_{\mathbb R^n}|D\hat \varphi_i|^{p}\,dx \right)^{\frac 2 {p}} \qquad \forall \,\rho\geq 0.
\end{align*}
Combining this bound with \eqref{uniform bound} and the strong convergence of $\hat\varphi_i$ to $\hat\varphi$ in $L^2_{\rm loc}(\R^n),$ 
we conclude that $\hat\varphi_i \to \hat \varphi $ strongly in $L^2(\mathbb R^n;\, v^{p^*-2})$. 

In particular, letting $i \to \infty$ in the analogue of \eqref{RS 1 p 2} we obtain
$$0<c(n,p)\leq \|\hat \varphi\|_{L^2(\mathbb R^n;v^{p^*-2})}. $$
Similarly, the analogue of \eqref{RS 1 p} implies that 
$$
|\mathcal S_{i,R}|\to 0\quad \text{and}\quad\int_{\mathcal R_{i,R}} |Dv|^{p-2}|D\hat \varphi_i|^2\, dx \leq C(n,p)  \qquad \forall \,R>1.
$$
So,  it follows from the weak convergence of $\hat\varphi_i$ to $\hat \varphi $ in $\dot W^{1,p}(\R^n)$  that,
up to a subsequence, 
$$D\hat \varphi_i\chi_{\mathcal R_{i,R}}\rightharpoonup D\hat \varphi\chi_{B(0,R)\setminus B(0,1/R)}\quad \text{ in $L^2(\mathbb R^n,\, \mathbb R^n)$},\qquad \forall\,R>1.$$
 Thanks to this bound, we can split
$$\hat \varphi_i =\hat \varphi +\psi_{i},\qquad \text{with }\psi_i:=\hat \varphi_i -\hat \varphi,$$
and the very same argument as in the case $p \leq \frac{2n}{n+2}$ allows us to deduce that
\begin{multline*}
\liminf_{i\to \infty}\int_{\mathcal R_{i,R}} |Dv|^{p-2} |D\hat \varphi_i|^2+(p-2)|w_i|^{p-2}\left(\frac {|Dv +D\varphi_i| - |Dv|}{\ez_i} \right)^2\\
\geq \int_{B(0,R)\setminus B(0,1/R)} |Dv|^{p-2} |D\hat \varphi|^2+(p-2)|Dv|^{p-2}\left(\frac{Dv}{|Dv|}\cdot D\hat \varphi\right)^2\,dx.
\end{multline*}
Recalling \eqref{eq:contradict 1p2}, 
since $R>1$ is arbitrary and the integrands above are nonnegative, this proves that
\eqref{eq:final} holds,
a contradiction to Proposition \ref{spectrum gap} since $\hat \varphi$ is orthogonal to $ T_v\mathcal M$ (being the strong $L^2(\R^n;v^{p^*-2})$-limit of $\hat \varphi_i$).

\medskip

\noindent
$\bullet$ {\it The case $p\geq 2$}. The argument is similar to the case $1<p<2$, but simpler.
 
If the statement of the lemma fails, then there exists  a sequence $0\not\equiv \varphi_i\to 0$ in $\dot W^{1,p}(\mathbb R^n)$, with $\varphi_i$ orthogonal to $ T_v\mathcal M$, such that
\begin{equation}
\label{eq:contradict}
\int_{\mathbb R^n} |Dv|^{p-2} |D\varphi_i|^2+(p-2)|w_i|^{p-2}\bigl(|D(v+\varphi_i)|-|Dv|\bigr)^2  \,dx< \bigl((p^*-1)S^p+\lambda\bigr) \int_{\mathbb R^n} v^{p^*-2}|\varphi_i|^2\, dx,
\end{equation}
where $w_i$ corresponds to $\varphi_i$ as in  the statement.

Let 
$$\ez_i:=\|\varphi_i\|_{\dot{W}^{1,2}(\mathbb R^n;|Dv|^{p-2})},\qquad \hat \varphi_i=\frac {\varphi_i} {\ez_i}.
$$
Note that, since $p\geq 2$, it follows by H\"older inequality that
$$\int_{\mathbb R^n} |Dv|^{p-2} |D\varphi_i|^2 \, dx\le
\biggl(\int_{\mathbb R^n} |D v|^{p}\, dx\biggr)^{1-\frac{p}{2}} \biggl(\int_{\mathbb R^n} |D \varphi_i|^p\, dx\biggr)^{\frac{p}{2}} \to 0,$$
hence $\ez_i\to 0. $

Since $1= \|\hat \varphi_i\|_{\dot{W}^{1,2}(\mathbb R^n;|Dv|^{p-2})}$,
Proposition~\ref{compact embedding} implies that,
up to a subsequence, $\hat\varphi_i \to \hat \varphi $ weakly in $\dot W^{1,2}_{\rm loc}(\R^n;\, |Dv|^{p-2})$
and
strongly in $L^2(\mathbb R^n;v^{p^*-2})$. 
Also, since $p\geq 2$, it follows from \eqref{eq:contradict} that
\begin{align*} 
1 = \int_{\mathbb R^n} |Dv|^{p-2} |D\hat \varphi_i|^2 \leq \bigl((p^*-1)S^p+\lambda\bigr) \int_{\mathbb R^n} v^{p^*-2}|\hat \varphi_i|^2\, dx,
\end{align*}
so we deduce that
$$\|\hat \varphi\|_{L^2(\mathbb R^n;v^{p^*-2})}\ge c(n,p)>0. $$
Also, since the integrand in the left hand side of \eqref{eq:contradict} is nonnegative,
we get
\begin{multline}
\int_{B(0,R)\setminus B(0,1/R)} |Dv|^{p-2} |D\hat \varphi_i|^2+(p-2)|w_i|^{p-2}\left(\frac {|Dv +D\varphi_i| - |Dv|}{\ez_i} \right)^2 \,dx
 \\ \le \bigl((p^*-1)S^p+\lambda\bigr) \int_{\mathbb R^n} v^{p^*-2}|\hat \varphi_i|^2\, dx\label{contradiction}
\end{multline}
for any $R>1$. 

Furthermore, because
$$
0< c(R)\leq |Dv|\leq C(R) \qquad \text{inside }B(0,R)\setminus B(0,1/R)\quad \forall\,R>1,
$$
writing
$$\hat \varphi_i =\hat \varphi +\psi_{i},\qquad \text{with }\psi_i:=\hat \varphi_i -\hat \varphi$$
we have
 $$
  \psi_{i} \rightharpoonup 0 \quad \text{ in $\dot W^{1,2}_{\rm loc}(\R^n \setminus \{0\})$.}
$$
Then we look at the left hand side of \eqref{contradiction}, and exactly as in the case $\frac{2n}{n+2}<p<2$
we deduce that
\begin{multline}
\label{eq:liminf R}
\liminf_{i\to \infty}\int_{B(0,R)\setminus B(0,1/R)} |Dv|^{p-2} |D\hat \varphi_i|^2+(p-2)|w_i|^{p-2}\left(\frac {|Dv +D\varphi_i| - |Dv|}{\ez_i} \right)^2\\
\geq \int_{B(0,R)\setminus B(0,1/R)} |Dv|^{p-2} |D\hat \varphi|^2+(p-2)|Dv|^{p-2}\left(\frac{Dv}{|Dv|}\cdot D\hat \varphi\right)^2\,dx.
\end{multline}
Recalling \eqref{contradiction} and
since $R>1$ is arbitrary,
this proves that \eqref{eq:final} holds, which contradicts Proposition \ref{spectrum gap} due to the orthogonality of $\hat\varphi$ to $T_v\mathcal M$.
\end{proof}

\section{Proof of Theorem~\ref{main thm}}
\label{sect:pf thm}

Thanks to the preliminary estimates performed in the previous sections, we can now follow the compactness strategy of \cite{BE1991,FN2019}.

By scaling, we can assume
$\|u\|_{L^{p^*}(\mathbb R^n)}=1$. Also, since the right hand side of \eqref{aim} is trivially bounded by $2$, it suffices to prove the result for $\delta(u)\ll 1$.

It follows by concentration-compactess that 
 for any $\hat \ez>0$ there exists a constant $\hat \delta=\hat\delta(n,p,\hat\ez)$ such that the following holds: if
$$
\|Du\|_{L^p(\mathbb R^n)}- S\le \hat\delta, $$
then there exists $\hat v\in \mathcal M$ which minimizes the right-hand side of \eqref{aim}, $\hat v$ satisfies $\frac 3 4\leq \|\hat v\|_{L^{p^*}}\leq \frac43$, and $\|Du-D\hat v\|_{L^{p}(\R^n)}\leq \hat\ez.$
Also, up to a translation and a rescaling that preserve the $L^{p^*}$-norm, we can assume that $\hat v=v_{a,1,0}$ with $a>0$.

As explained in the introduction, the basic idea would be to expand $u$ around $\hat v$. Unfortunately with this choice we would not have the desired orthogonality needed to use the spectral properties proved in the previous section.

Hence, we now prove the following result (recall Remark \ref{rmk:orth} for the notion of orthogonality when a function is in $L^{p^*}(\R^n)$): 
\begin{lem}
\label{lem:orthogonal}
Let $\|u\|_{L^{p^*}(\mathbb R^n)}=1$,
and assume that $\|Du-D\hat v\|_{L^{p}(\R^n)}\leq \hat\ez$ with $\hat v= v_{a,1,0} \in \mathcal M$.
There exist  $\ez'=\ez'(n,p)>0$ and a modulus of continuity $\omega:\R^+\to \R^+$ such that the following holds: If $\hat \ez\leq \ez'$, then
there exists $v \in \mathcal M$ such that $u-v$ is orthogonal to $T_v\mathcal M$ and $\|Du-Dv\|_{L^{p}(\R^n)}\leq \omega(\hat\ez).$
\end{lem}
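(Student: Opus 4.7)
The plan is to introduce a minimization principle whose Euler--Lagrange equation encodes the desired orthogonality. Parameterize $\mathcal M$ by $\theta=(a,b,x_0)\in\Theta:=(\R\setminus\{0\})\times(0,\infty)\times\R^n$, write $v_\theta$ for the associated extremal, and $\hat\theta$ for the parameters of $\hat v$. I propose the functional
$$J(\theta):=\int_{\R^n}\left[\frac{|u|^{p^*}}{p^*}-u\,|v_\theta|^{p^*-2}v_\theta+\frac{p^*-1}{p^*}|v_\theta|^{p^*}\right]dx,$$
whose integrand is (pointwise) the Bregman divergence of the strictly convex function $s\mapsto |s|^{p^*}/p^*$, so $J\geq 0$ with equality iff $u=v_\theta$ a.e. A direct computation gives
$$\partial_{\theta_i}J(\theta)=-(p^*-1)\int_{\R^n}|v_\theta|^{p^*-2}(\partial_{\theta_i}v_\theta)(u-v_\theta)\,dx,$$
so critical points of $J$ in $\Theta$ are precisely those $v=v_\theta$ with $u-v$ orthogonal to $T_v\mathcal M$ in $L^2(\R^n;|v|^{p^*-2})$ (in the sense of Remark \ref{rmk:orth}).

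To produce a critical point of $J$ near $\hat\theta$, I would minimize $J$ on the closed set $\mathcal N_R:=\{\theta\in\Theta:\|D(v_\theta-\hat v)\|_{L^p(\R^n)}\leq R\}$, where $R=R(\hat\ez)$ is to be chosen. Since $\theta\mapsto v_\theta$ is a proper embedding from $\Theta$ into $\dot W^{1,p}(\R^n)$ (degeneration of parameters forces $v_\theta$ to leave any ball in $\dot W^{1,p}$), the set $\mathcal N_R$ is compact, so $J$ attains its minimum at some $\theta_*$. Expanding the integrand around $v_\theta=\hat v$ and using the Sobolev bound $\|u-\hat v\|_{L^{p^*}}\leq S^{-1}\hat\ez$, one obtains $J(\theta_*)\leq J(\hat\theta)\leq \omega_1(\hat\ez)$ for some modulus of continuity $\omega_1$.

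The heart of the argument is to show that $\theta_*$ lies in the interior of $\mathcal N_R$, hence is a critical point. For this I would use that the Gram matrix of $T_{\hat v}\mathcal M$ in $L^2(\R^n;\hat v^{p^*-2})$ is strictly positive definite, which follows from Proposition \ref{spectrum gap}: since the elements of $T_{\hat v}\mathcal M$ span the first two eigenspaces of the linearized operator $\mathcal L_{\hat v}$, they are linearly independent in this weighted space. Combined with the smoothness of $\theta\mapsto v_\theta$ into $\dot W^{1,p}$, this upgrades to a quadratic lower bound $J(v_\theta)\geq c(n,p)\|D(v_\theta-\hat v)\|_{L^p}^2$ in a fixed neighborhood of $\hat v$. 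Consequently $J\geq c(n,p)R^2$ on $\partial\mathcal N_R$, and picking $R:=C(n,p)\sqrt{\omega_1(\hat\ez)}$ with $C$ large forces $\theta_*$ into the interior. The resulting $v:=v_{\theta_*}$ satisfies the orthogonality condition and the bound $\|D(u-v)\|_{L^p}\leq\hat\ez+R=:\omega(\hat\ez)$, concluding the proof.

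The main obstacle I anticipate is upgrading the quadratic lower bound for $J$ from the infinitesimal regime (where it is a straightforward Taylor expansion plus non-degeneracy of the Gram matrix) to a genuine neighborhood of $\hat\theta$ in $\Theta$. The calibration of $R$ as a small function of $\hat\ez$ is forced on us by this issue, and the proof must carefully balance the upper bound $J(\hat\theta)\leq\omega_1(\hat\ez)$ against the lower bound on the boundary, while exploiting the translation/dilation/sign invariances of $\mathcal M$ to make the lower bound uniform in the base point $\hat v$.
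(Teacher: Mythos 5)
Your proposal is, at its core, the same proof as the paper's: your $J(\theta)$ equals $(p^*-1)\mathcal F_u[v_\theta]$ plus the constant $\frac1{p^*}\int_{\R^n}|u|^{p^*}dx$, where $\mathcal F_u$ is exactly the functional \eqref{eq:min pb v} minimized in the paper; your Bregman-divergence observation is the paper's H\"older argument showing that $\mathcal F_u$ is uniquely minimized at $u$ when $u\in\mathcal M$; and the Euler--Lagrange computation yielding orthogonality to $T_v\mathcal M$ is identical. The only real difference is how one produces an interior minimizer near $\hat v$: the paper does this by a soft compactness/continuity argument (unique global minimization at $\hat\theta$ when $u=\hat v$, then perturb), whereas you set up a quantitative trapping argument on $\mathcal N_R$.

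Two claims in that trapping step are false as stated, though both are repairable. First, $\theta\mapsto v_\theta$ is not a proper embedding: letting $a\to0$ with $b,x_0$ fixed sends $v_\theta\to0$ strongly in $\dot W^{1,p}(\R^n)$, so degenerating parameters need not leave bounded sets. What saves you is that $\mathcal N_R$ is a ball around $\hat v$, not around the origin: any parameter sequence leaving every compact subset of $\Theta$ while keeping $\|Dv_\theta\|_{L^p}$ bounded converges weakly (up to a subsequence) to $0$, so by weak lower semicontinuity its $\dot W^{1,p}$-distance to $\hat v$ is asymptotically at least $\|D\hat v\|_{L^p}\geq S-\hat\ez$; hence $\mathcal N_R$ is compact only for $R$ below that threshold, which is all you need. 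Second, the bound $J(\theta)\geq c\|D(v_\theta-\hat v)\|_{L^p}^2$ cannot hold: if $u\in\mathcal M$, $u\neq\hat v$, then $J=0$ at $v_\theta=u$ while the right-hand side is positive. The Bregman divergence controls the distance of $v_\theta$ from $u$, not from $\hat v$, and only in $L^{p^*}$-type norms: pointwise convexity bounds give $J(\theta)\geq c\,\|u-v_\theta\|_{L^{p^*}}^{\max\{2,p^*\}}$ (for $p^*<2$ after a H\"older step with the weight $(|u|+|v_\theta|)^{p^*-2}$), and to convert $\|D(v_\theta-\hat v)\|_{L^p}=R$ on $\partial\mathcal N_R$ into $\|v_\theta-\hat v\|_{L^{p^*}}\geq cR$ you need that $\theta\mapsto v_\theta$ is a smooth immersion into both $\dot W^{1,p}$ and $L^{p^*}$ near $\hat\theta$ --- this, rather than Proposition \ref{spectrum gap}, is the relevant nondegeneracy, and the linear independence of $T_{\hat v}\mathcal M$ is elementary. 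With these corrections the calibration still closes: $J(\hat\theta)\leq C\hat\ez^{\min\{2,p^*\}}$ versus $J\geq c\bigl(cR-S^{-1}\hat\ez\bigr)^{\max\{2,p^*\}}$ on $\partial\mathcal N_R$, so $R\sim\hat\ez^{\min\{2,p^*\}/\max\{2,p^*\}}$ works and yields a modulus of continuity, which is all the lemma requires; but as written these two steps are genuine gaps, and the exponent in your choice $R=C\sqrt{\omega_1(\hat\ez)}$ must be adjusted accordingly.
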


\begin{proof}
Given $u$ as in the statement, we consider the minimization of the functional
\begin{equation}
\label{eq:min pb v}
\mathcal M\ni v\mapsto \mathcal F_u[v]:=\frac{1}{p^*}\int_{\R^n} |v|^{p^*}\,dx -  \frac{1}{p^*-1}\int_{\R^n} |v|^{p^*-2}v\,u\,dx.
\end{equation}
Assume first that $u=\hat v \in \mathcal M.$ We claim that the minimizer of \eqref{eq:min pb v} is unique and
coincides with $u$.

To prove this we 
note that, by H\"older inequality,
\begin{equation}
\label{eq:min F}
\mathcal F_u[v]
\geq \frac{1}{p^*}\int_{\R^n} |v|^{p^*}\,dx -  \frac{1}{p^*-1}\biggl(\int_{\R^n} |u|^{p^*}\,dx\biggr)^{\frac{1}{p^*}} \biggl(\int_{\R^n} |v|^{p^*}\,dx\biggr)^{\frac{p^*-1}{p^*}}
 \geq -\frac{1}{p^*(p^*-1)}\int_{\R^n} u^{p^*}\,dx,
\end{equation}
where the second inequality follows from the fact that the function $$(0,+\infty)\ni s \mapsto \frac{1}{p^*}s^{p^*}-\frac{1}{p^*-1}A\,s^{p^*-1}$$ is uniquely minimized at $s=A$.
Noticing that the last term in \eqref{eq:min F} coincides with $\mathcal F_u[u]$, and that equality holds in both inequalities of \eqref{eq:min F} if and only if $v=u$, the claim follows.

Now, if $u$ is close to $\hat v=v_{a,1,0}$ in $\dot W^{1,p}(\R^n)$-norm, it follows by compactness that the minimum of the function
$$
\R\times (0,+\infty)\times \R^n \ni (a,b,x_0)\mapsto \mathcal F_u[v_{a,b,x_0}]
$$
is attained at some values $(a',b',x_0')$
close to $(a,1,0)$, hence $\|Dv_{a',b',x_0'}-D\hat v\|_{L^p(\R^n)} \ll 1$. Thus, since by assumption $u$ and $\hat v$ are $\dot W^{1,p}(\R^n)$-close, we deduce that
$$
\|Du-Dv_{a',b',x_0'}\|_{L^{p}(\R^n)} \to 0 \qquad \text{as }\|Du-D\hat v\|_{L^{p}(\R^n)} \to 0,
$$
which proves the existence of a modulus of continuity $\omega$ as in the statement.

Finally, it is immediately to check that if $v\in \mathcal M$ is close to $v_{a,1,0}$ and minimizes $\mathcal F_u$, 
then
$$
0=\frac{d}{dt}\Big|_{t=0}\mathcal F_u[v+t\xi]=\int_{\R^n}v^{p^*-2}\xi\,(v-u) \,dx\qquad \forall\,\xi \in T_v\mathcal M.
$$
This concludes the proof.
\end{proof}

Thanks  to Lemma \ref{lem:orthogonal}, given $u$ as at the beginning of the section with $\delta(u)$ sufficiently small, we can find $v \in \mathcal M$ close to $u$ such that $u-v$ is orthogonal to $T_v\mathcal M$.
More precisely, $u$ can be written as $u=v+\ez\varphi$, where $\ez\leq \omega(\hat\ez)$ with $\hat\ez\leq  \ez'$, $\| D\varphi \|_{L^{p}(\mathbb R^n)}=1$, and  $\varphi$ is orthogonal to $T_v\mathcal M$ (see Remark \ref{rmk:orth}).
Furthermore,
up to a further small translation and rescaling, we can assume that $v=v_{a_0,1,0}$ with $\frac12 \leq \|v\|_{L^{p^*}}\leq 2$ so that all the statements in Section~\ref{spectrum gaps} hold.

Observe that, for $\delta(u)$ small,
\begin{equation}\label{lower estimate}
 \delta(u)=\|Du\|_{L^p(\mathbb R^n)}- S \ge c(n,p) \Big(\|Du\|^p_{L^p(\mathbb R^n)} -  S^p\Big).
\end{equation}
In the following argument several parameters will appear, and these parameters depend on each other.
To simplify the notation we shall not explicit their dependence on $n$ and $p$, but we emphasize how the parameters depend on each other, at least until they have been fixed.

\smallskip
\noindent
$\bullet$ {\it The case $1 <p\leq \frac {2n} {n+2}$}.
Let $\kappa>0$ be a small constant to be fixed later.
By  Lemma~\ref{vector inequ} we have
\begin{align*}
  \|Du\|^p_{L^p(\mathbb R^n)}&=  \int_{\mathbb R^n}|Dv+\ez D\varphi|^p \,dx  \\
& \ge  \int_{\mathbb R^n} |Dv|^p\,dx+\ez p \int_{\mathbb R^n} |Dv|^{p-2}Dv\cdot D\varphi \,dx  \\
&  \qquad \qquad    +    \frac{\ez^2p(1-\kappa)} 2  \biggl(\int_{\mathbb R^n} |Dv|^{p-2}|D\varphi|^2 + (p-2) |w|^{p-2}\biggl(\frac{|Du|-|Dv|}{\ez}\biggr)^2 \,dx \biggr)\\
& \qquad \qquad   \qquad \qquad  \qquad \qquad  + c_0(\kappa) \int_{\mathbb R^n} \min\bigl\{\ez^p |D\varphi|^p,\,\ez^2|Dv|^{p-2} |D\varphi|^2\bigr\} \,dx,
\end{align*}
where $w$ corresponds to $u$ and $v$ as in Lemma~\ref{vector inequ}.
On the other hand, by Lemma~\ref{upper bound} and the concavity of $t\mapsto t^{\frac p {p^*}}$,
\begin{align*}
1&=\|u\|^p_{L^{p^*}(\mathbb R^n)}=\left(\int_{\mathbb R^n}|v+\ez \varphi|^{p^*} \,dx\right)^{\frac p {p^*}}\\
& \le  \|v\|^p_{L^{p^*}(\mathbb R^n)} + \|v\|_{L^{p^*}(\mathbb R^n)}^{p-p^*}\left(\ez p \int_{\mathbb R^n} v^{p^*-1}\varphi\,dx +\ez^2\left(\frac{ p(p^*-1)} 2 +\frac{p \kappa}{p^*}\right)\int_{\mathbb R^n}\frac {(v  +C_1(\kappa)|\ez\varphi|)^{p^*}}{v^2+|\ez \varphi|^2}|\varphi|^2\,dx\right). 
\end{align*}
Since, by \eqref{S equ}, 
$$\ez p \int_{\mathbb R^n} |Dv|^{p-2}Dv\cdot D\varphi \,dx=\|v\|_{L^{p^*}(\mathbb R^n)}^{p-p^*} S^p \ez p  \int_{\mathbb R^n} v^{p^*-1}\varphi\,dx,$$
and $\|Dv\|_{L^p(\mathbb R^n)}=S \|v\|_{L^{p^*}(\mathbb R^n)}$, we then immediately conclude that
\begin{multline*}
 C(n,p) \delta(u)\ge   \|Du\|^p_{L^p(\mathbb R^n)} -  S^p \|u\|^p_{L^{p^*}(\mathbb R^n)} \\
\ge    \frac{\ez^2 p(1-\kappa) } 2  \biggl(\int_{\mathbb R^n} |Dv|^{p-2}|D\varphi|^2 + (p-2) |w|^{p-2}\biggl(\frac{|Du|-|Dv|}{\ez}\biggr)^2 \,dx \biggr)\\
 \ + c_0(\kappa) \int_{\mathbb R^n}  \min\bigl\{ \ez^p|D\varphi|^p,\,\ez^2|Dv|^{p-2}  |D\varphi|^2\bigr\} \,dx \\
 - \ez^2 \|v\|_{L^{p^*}(\mathbb R^n)}^{p-p^*} S^p\left(\frac{p (p^*-1)} 2 +\frac{p \kappa}{p^*}\right)\int_{\mathbb R^n}\frac {(v  +C_1(\kappa)|\ez\varphi|)^{p^*}}{v^2+|\ez\varphi|^2}|\varphi|^2\,dx.
\end{multline*}
Now, for $\delta(u) \leq \delta'=\delta'(\ez,\kappa,\gamma_0)$ small enough,  Proposition~\ref{new spectrum gap} allows us to  reabsorb the last term above: more precisely, we have
\begin{align*}
&C(n,p) \delta(u) \\
&\qquad \ge  p\ez^2  \biggl(\frac{(1-\kappa)} 2 - \frac{(p^*-1)+\frac2{p^*}\kappa }{2(p^*-1)+2\lambda S^{-p}} \biggr)  \biggl(\int_{\mathbb R^n} |Dv|^{p-2}|D\varphi|^2 + (p-2) |w|^{p-2}\biggl(\frac{|Du|-|Dv|}{\ez}\biggr)^2 \,dx \biggr)  \\
& \qquad +   \biggl(c_0(\kappa) - \gamma_0\frac{p\bigl[(p^*-1)+\frac2{p^*}\kappa \bigr]}{2(p^*-1)+2\lambda S^{-p}}  \biggr) \int_{\mathbb R^n} \min\bigl\{ \ez^p|D\varphi|^p,\,\ez^2|Dv|^{p-2}  |D\varphi|^2\bigr\} \,dx,
\end{align*}
and choosing first $\kappa=\kappa(n,p)>0$ small  enough so that
$$\frac{(1-\kappa)} 2 - \frac{(p^*-1)+\frac2{p^*}\kappa }{2(p^*-1)+2\lambda S^{-p}}\geq 0,
$$
and then $\gamma_0=\gamma_0(n,p)>0$  small  enough so that
$$ \frac {c_0} 2 \geq  \gamma_0\frac{p\bigl[(p^*-1)+\frac2{p^*}\kappa \bigr]}{2(p^*-1)+2\lambda S^{-p}},$$
we eventually arrive at
\begin{align}
C(n,p) \delta(u) 
\ge   \frac {c_0} 2  \int_{\mathbb R^n} \min\bigl\{ \ez^p|D\varphi|^p,\,\ez^2|Dv|^{p-2}  |D\varphi|^2\bigr\} \,dx. \label{lowerbd} 
\end{align}
Observe that, since $p<2$, it follows by H\"older inequality that
\begin{align*}
 \biggl( \int_{\{\ez |D\varphi|< |Dv|\}}   |D\varphi|^p \,dx  \biggr)^{\frac 2 p }&\leq 
\biggl(\int_{\{\ez |D\varphi|< |Dv|\}}   |Dv|^p \,dx \biggr)^{\frac 2 p -1}
\int_{\{\ez |D\varphi|< |Dv|\}}  |Dv|^{p-2}  |D\varphi|^2 \,dx\\
&\leq C(n,p)\int_{\{\ez |D\varphi|< |Dv|\}}  |Dv|^{p-2}  |D\varphi|^2 \,dx.
\end{align*}
Hence, since $\| D\varphi \|_{L^{p}(\mathbb R^n)}=1$, we get
\begin{align}
&\int_{\mathbb R^n} \min\bigl\{ \ez^p|D\varphi|^p,\,\ez^2|Dv|^{p-2}  |D\varphi|^2\bigr\} \,dx \nonumber  \\
 &\qquad = \int_{\{\ez |D\varphi|\ge |Dv|\}}   \ez^p|D\varphi|^p  \,dx + \int_{\{\ez |D\varphi|< |Dv|\}}  \ez^2|Dv|^{p-2}  |D\varphi|^2 \,dx \nonumber \\
 & \qquad \ge \int_{\{\ez |D\varphi|\ge |Dv|\}}   \ez^p|D\varphi|^p  \,dx + c\biggl(\int_{\{\ez |D\varphi|< |Dv|\}}   \ez^p  |D\varphi|^p \,dx \biggr)^{\frac 2 p } \ge c   \left(\int_{\mathbb R^n} \ez^p |D\varphi|^p \,dx \right)^{\frac 2 p }, \label{holder}
\end{align}
where $c=c(n,p)>0$.
 
Combining  \eqref{lowerbd} and \eqref{holder},
we conclude the proof of \eqref{aim} with $\alpha=2$.
\medskip

\noindent
$\bullet$ {\it The case $\frac {2n} {n+2}< p<2$}. The proof is very similar to the previous case, with very small changes.

By Lemma~\ref{vector inequ}  we have
\begin{multline*}
\int_{\mathbb R^n}|Du|^p \,dx- \int_{\mathbb R^n} |Dv|^p\,dx-\ez p \int_{\mathbb R^n} |Dv|^{p-2}Dv\cdot D\varphi \,dx\\
\ge     \frac{\ez^2p(1-\kappa)} 2  \biggl(\int_{\mathbb R^n} |Dv|^{p-2}|D\varphi|^2 + (p-2) |w|^{p-2}\biggl(\frac{|Du|-|Dv|}{\ez}\biggr)^2 \,dx \biggr)\\
+ c_0(\kappa)\int_{\mathbb R^n} \min\bigl\{ \ez^p|D\varphi|^p,\,\ez^2|Dv|^{p-2}  |D\varphi|^2\bigr\} \,dx,
\end{multline*}
where $w$ corresponds to $u$ and $v$ as in Lemma~\ref{vector inequ}, while by Lemma~\ref{upper bound}
\begin{align*}
\int_{\mathbb R^n}|u|^{p^*} \,dx\le  1+\ez p^*\int_{\mathbb R^n} v^{p^*-1}\varphi\,dx +\ez^2\left(\frac{p^*(p^*-1)} 2 +\kappa\right)\int_{\mathbb R^n}v^{p^*-2}|\varphi|^2\,dx+ \ez^{p^*} C_1(\kappa)\int_{\mathbb R^n}|\varphi|^{p^*} \,dx. 
\end{align*}
Hence, arguing as in the case $1<p\leq \frac{2n}{n+2}$, it follows from \eqref{S equ}, Proposition~\ref{new spectrum gap}, and \eqref{holder} that, by choosing  first $\kappa>0$ and then $\gamma_0>0$ small enough, for $\delta(u)$ sufficiently small we have
\begin{align*}
\int_{\mathbb R^n}|Du|^p \,dx- \int_{\mathbb R^n} |Dv|^p\,dx
\ge c   \left(\int_{\mathbb R^n} \ez^p |D\varphi|^p \,dx \right)^{\frac 2 p } - \ez^{p^*} \frac{C_1 p}{p^*} \int_{\mathbb R^n}|\varphi|^{p^*} \,dx.
\end{align*}
Since $p^*>2$ and $1=\|D\varphi\|_{L^p(\R^n)}\geq S\|\varphi\|_{L^{p^*}(\R^n)}$, the result follows by the Sobolev inequality, provided $\ez$ is sufficiently small.

\medskip

\noindent
$\bullet$ {\it The case $p\geq 2$}. 
By Lemma~\ref{vector inequ}  we have
\begin{multline*}
\int_{\mathbb R^n}|Du|^p \,dx- \int_{\mathbb R^n} |Dv|^p\,dx-\ez p \int_{\mathbb R^n} |Dv|^{p-2}Dv\cdot D\varphi \,dx\\
\ge     \frac{\ez^2p(1-\kappa)} 2  \biggl(\int_{\mathbb R^n} |Dv|^{p-2}|D\varphi|^2 + (p-2) |w|^{p-2}\biggl(\frac{|Du|-|Dv|}{\ez}\biggr)^2 \,dx \biggr)+\ez^p c_0(\kappa)\int_{\mathbb R^n} |D\varphi|^p \,dx,
\end{multline*}
where $w$ corresponds to $u$ and $v$ as in Lemma~\ref{vector inequ}, while by Lemma~\ref{upper bound}
\begin{align*}
\int_{\mathbb R^n}|u|^{p^*} \,dx\le  1+\ez p^*\int_{\mathbb R^n} v^{p^*-1}\varphi\,dx +\ez^2\left(\frac{p^*(p^*-1)} 2 +\kappa\right)\int_{\mathbb R^n}v^{p^*-2}|\varphi|^2\,dx+ \ez^{p^*} C_1(\kappa)\int_{\mathbb R^n}|\varphi|^{p^*} \,dx. 
\end{align*}
Hence, arguing again as in the case $p\leq \frac{2n}{n+2}$, it follows from \eqref{S equ} and Proposition~\ref{new spectrum gap} that, by choosing  $\kappa>0$ small enough,
\begin{align*}
\int_{\mathbb R^n}|Du|^p \,dx- \int_{\mathbb R^n} |Dv|^p\,dx
\ge \ez^p c_0\int_{\mathbb R^n}  |D\varphi|^p \,dx - \ez^{p^*} \frac{C_1p}{p^*} \int_{\mathbb R^n}|\varphi|^{p^*} \,dx.
\end{align*}
Since $1=\|D\varphi\|_{L^p(\R^n)}\geq S\|\varphi\|_{L^{p^*}(\R^n)}$, this implies \eqref{aim} with $\alpha=p$ when $\ez$ is sufficiently small, concluding the proof of Theorem~\ref{main thm}.
\qed

\appendix

\section{A Hardy-Poincare inequality}
\begin{lem}\label{HP ineq}
Let $\alpha< n$ and let $u\in \dot W^{1,\,p}\bigl(\mathbb R^n; |x|^{-\az}\bigr)$. Then, for any $R>1$, we have
$$\int_{\mathbb R^n\setminus {B(0,R)}} |u|^p  |x|^{-\az}\, dx \le C(n,p,\az) \int_{\mathbb R^n\setminus {B(0,R)}}|Du|^p  |x|^{-\az+p} \, dx.$$
\end{lem}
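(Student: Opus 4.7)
The plan is to prove this via a direct integration-by-parts argument against a radial vector field whose divergence reproduces the weight, followed by H\"older's inequality. By density it suffices to prove the estimate for $u\in C^1_c(\mathbb R^n)$, which disposes of the boundary at infinity once we integrate by parts on $\mathbb R^n\setminus B(0,R)$.

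The key idea is to observe that, since $\alpha<n$, the vector field
$$F(x):=\frac{1}{n-\alpha}\,\frac{x}{|x|^\alpha}$$
is smooth on $\mathbb R^n\setminus\{0\}$ and satisfies $\operatorname{div}F(x)=|x|^{-\alpha}$ (a direct computation gives $\operatorname{div}(x/|x|^\alpha)=(n-\alpha)|x|^{-\alpha}$). Applying the divergence theorem on $\Omega:=\mathbb R^n\setminus B(0,R)$ to $|u|^p F$, and using that the outer unit normal to $\Omega$ on $\partial B(0,R)$ is $-x/|x|$, I would obtain
$$\int_{\Omega}|u|^p|x|^{-\alpha}\,dx = -p\int_{\Omega}|u|^{p-2}u\,Du\cdot F\,dx\;-\;\frac{R^{1-\alpha}}{n-\alpha}\int_{\partial B(0,R)}|u|^p\,d\sigma.$$
Thanks to $n-\alpha>0$, the boundary contribution is non-positive and can simply be dropped.

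The remaining step is to bound the bulk term. Using $|F|=\frac{|x|^{1-\alpha}}{n-\alpha}$ and splitting $|x|^{1-\alpha}=|x|^{-\alpha(p-1)/p}\cdot|x|^{(p-\alpha)/p}$, I would apply H\"older's inequality with conjugate exponents $p/(p-1)$ and $p$:
\begin{align*}
\int_{\Omega}|u|^p|x|^{-\alpha}\,dx
&\le \frac{p}{n-\alpha}\int_{\Omega}|u|^{p-1}|Du|\,|x|^{1-\alpha}\,dx\\
&\le \frac{p}{n-\alpha}\left(\int_{\Omega}|u|^p|x|^{-\alpha}\,dx\right)^{\!(p-1)/p}\left(\int_{\Omega}|Du|^p|x|^{p-\alpha}\,dx\right)^{\!1/p}.
\end{align*}
Dividing through by $\left(\int_{\Omega}|u|^p|x|^{-\alpha}\right)^{(p-1)/p}$ and raising to the power $p$ yields the claimed inequality with $C(n,p,\alpha)=\bigl(\tfrac{p}{n-\alpha}\bigr)^p$.

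I do not foresee any serious obstacle: the only mildly delicate point is the density/approximation argument, since the weighted Sobolev space is defined via closure of smooth functions. For this it is enough to truncate $u$ by a smooth cutoff supported away from $\partial B(0,R)$ and far from infinity, verify the inequality for such test functions, and pass to the limit using dominated convergence on both sides. The sign of the boundary term, which is the only place where the assumption $\alpha<n$ is used, is automatic in this approximation procedure.
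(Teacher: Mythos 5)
Your proof is correct, but it follows a genuinely different route from the paper's. The paper works in polar coordinates: it bounds $|u(r\theta)|^p\le p\int_r^\infty |u(t\theta)|^{p-1}|Du(t\theta)|\,dt$ along rays (using decay at infinity), exchanges the order of integration by Fubini, uses $\alpha<n$ to integrate the weight $r^{n-1-\alpha}$ up to radius $t$ and produce the factor $t^{n-\alpha}$, and then concludes with exactly the H\"older-plus-absorption step you also use. You instead test the identity $\operatorname{div}\bigl(|x|^{-\alpha}x\bigr)=(n-\alpha)|x|^{-\alpha}$ against $|u|^p$ on the exterior domain, so that $\alpha<n$ enters only through the positivity of $n-\alpha$ and the resulting favorable sign of the boundary term on $\partial B(0,R)$; your computations (the divergence identity, the sign of the boundary term with outer normal $-x/|x|$, and the exponent splitting $|x|^{1-\alpha}=|x|^{-\alpha(p-1)/p}|x|^{(p-\alpha)/p}$ in H\"older) all check out, and $|u|^p$ is indeed $C^1$ since $p>1$. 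What your argument buys is an explicit constant $\bigl(\tfrac{p}{n-\alpha}\bigr)^p$ and the avoidance of Fubini and the ray-wise fundamental theorem of calculus; what the paper's buys is that it never needs the divergence theorem on an unbounded domain, only one-dimensional calculus along rays. The only point to treat with a little more care is the reduction to $u\in C^1_c$: the weighted homogeneous space controls only the gradient, so the limit passage on the left-hand side should be justified (e.g.\ by applying the inequality to approximating functions and using Fatou, the right-hand side being trivially infinite when $\int_{\mathbb R^n\setminus B(0,R)}|Du|^p|x|^{p-\alpha}\,dx=\infty$); but the paper's own proof glosses over the very same point (it silently uses $u(t\theta)\to0$ along rays), so this is a shared informality rather than a gap in your argument.
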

\begin{proof}
Since $R\geq 1$ and $\alpha<n$, thanks to Fubini's Theorem and using polar coordinates we get
\begin{align*}
&\int_{\mathbb R^n\setminus {B(0,R)}} |u|^p  |x|^{-\az}\, dx \\
&\qquad \le    C(n,p)  \int_{\mathbb S^{n-1}} \int_{R}^{\infty} |u(r\theta)|^p  r^{-\az+n-1 }\, dr \, d\theta \\
&\qquad \le   C(n,p)  \int_{\mathbb S^{n-1}} \int_{R}^{\infty} \int_{r}^{\infty}|u(t\theta)|^{p-1} |Du|(t\theta)  r^{-\az+n-1 }\, dt\, dr\, d\theta \\
 &\qquad \le  C(n,p)  \int_{\mathbb S^{n-1}} \int_{R}^{\infty} \int_{1}^{t}|u(t\theta)|^{p-1} |Du|(t\theta)  r^{-\az+n-1 }\, dr\, dt\, d\theta \\
&\qquad \le  C(n,p,\az)  \int_{\mathbb S^{n-1}} \int_{R}^{\infty} |u(t\theta)|^{p-1} |Du|(t\theta)  t^{-\az+n} \, dt\, d\theta\\
&\qquad \le  C(n,p,\az) \left( \int_{\mathbb S^{n-1}}\int_{R}^{\infty} |u(t\theta)|^{p}  t^{-\az+n-1} \, dt\, d\theta \right)^{\frac {p-1} p}\cdot \\
&\qquad\qquad\qquad\qquad\qquad \qquad \cdot\left(\int_{\mathbb S^{n-1}} \int_{R}^{\infty} |Du|^p(t\theta)  t^{-\az+n-1+p} \, dt\, d\theta \right)^{\frac 1 p}\\
  &\qquad \le  C(n,p,\az)  \biggl(   \int_{\mathbb R^n\setminus {B(0,R)}}  |u(x)|^{p}  |x|^{-\az} \, dx  \biggr)^{\frac {p-1} p}  \biggl(  \int_{\mathbb R^n\setminus {B(0,R)}} |Du|^p(x)  |x|^{-\az+p} \, dx \biggr)^{\frac 1 p}
\end{align*}
where we applied H\"older inequality in the penultimate step. This implies the lemma. 
\end{proof}

\section{A numerical inequality}

\begin{lem}
Let $1<p\leq \frac{2n}{n+2}$.
Given $\ez_0>0$, there exists $\zeta=\zeta(\ez_0)$ small enough so that
the following inequality holds for any nonnegative numbers $\epsilon,r,\,a,\,b$  satisfying $\ez \in (0,1)$ and $\ez a\le \zeta \bigl(1+r^{\frac p {p-1}}\bigr)^{1-\frac n p}$:
	\begin{align}
	&\bigl(1+r^{\frac p {p-1}}\bigr)^{\left(1-\frac n p\right)(p^*-2)+p-1}\left[a^2 \zeta^p r^{\frac p {p-1}}\bigl(1+r^{\frac p {p-1}}\bigr)^{-p}+ a^2\ez^p b^p \bigl(1+r^{\frac p {p-1}}\bigr)^{n-p}+a^{2-p} b^p\right] \nonumber \\
	&\qquad\le  \ez_0 \bigl(1+r^{\frac p {p-1}}\bigr)^{\left(1-\frac n p\right)(p^*-2)} a^2+ C(\ez_0,n,p)(1+r )^{-\frac p {p-1}} \left(\bigl(1+r^{\frac p {p-1}}\bigr)^{-\frac n p}  r^{\frac 1 {p-1}}+\ez b\right)^{p-2} b^2 \label{inter} \\
	&\qquad\le   \ez_0 \bigl(1+r^{\frac p {p-1}}\bigr)^{\left(1-\frac n p\right)(p^*-2)} a^2+ C(\ez_0,n,p) \left(\bigl(1+r^{\frac p {p-1}}\bigr)^{-\frac n p}  r^{\frac 1 {p-1}}+\ez b\right)^{p-2} b^2.	\label{young}
	\end{align}\end{lem}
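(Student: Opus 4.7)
The inequality \eqref{young} follows trivially from \eqref{inter}, since $-p/(p-1)<0$ and $r\ge 0$ give $(1+r)^{-p/(p-1)}\le 1$; so the task reduces to proving \eqref{inter}. My plan is to substitute $s:=1+r^{\frac{p}{p-1}}$ (so that $r^{\frac{p}{p-1}}=s-1$ and $r^{\frac{1}{p-1}}=(s-1)^{\frac{1}{p}}$) and set $\sigma:=s^{-\frac{n}{p}}(s-1)^{\frac{1}{p}}$, which is exactly the first summand inside the parenthesis on the right-hand side of \eqref{inter}. A short comparison yields $(1+r)^{-p/(p-1)}\ge c(p)\,s^{-1}$, so it suffices to establish \eqref{inter} with $(1+r)^{-p/(p-1)}$ replaced by $s^{-1}$ (absorbing the comparison constant into $C(\ez_0,n,p)$). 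Label the three summands on the left of \eqref{inter} by $T_1,T_2,T_3$, and set $\gamma:=(1-n/p)(p^*-2)+p-1=\frac{2n}{p}-n+p-3$, so that $T_3=s^\gamma a^{2-p}b^p$.

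Next I would dispose of $T_1$ and $T_2$, which are easy. The elementary bound $(s-1)/s\le 1$ gives $T_1\le \zeta^p s^{(1-n/p)(p^*-2)}a^2$, which is absorbed into $\ez_0 s^{(1-n/p)(p^*-2)}a^2$ once $\zeta$ is chosen so that $\zeta^p\le \ez_0$. For $T_2$, the constraint $\ez a\le \zeta\,s^{1-n/p}$ yields $(\ez a)^p\le \zeta^p s^{p-n}$; factoring $\ez^p a^2=(\ez a)^p a^{2-p}$ produces $T_2\le \zeta^p T_3$. So the entire problem reduces to the bound for $T_3$.

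The main obstacle is therefore the estimate
\[
T_3=s^\gamma a^{2-p}b^p\le \ez_0\,s^{(1-n/p)(p^*-2)}a^2+C(\ez_0,n,p)\,s^{-1}(\sigma+\ez b)^{p-2}b^2,
\]
which I plan to handle by splitting according to whether $\sigma$ or $\ez b$ dominates. In the regime $\sigma\ge \ez b$, I would apply Young's inequality with the conjugate exponents $2/(2-p)$ and $2/p$, writing $s^\gamma a^{2-p}b^p=(s^{\gamma_1}a^2)^{(2-p)/2}(s^{\gamma_2}b^2)^{p/2}$, with $\gamma_1:=(1-n/p)(p^*-2)$ forced by the first term of the right-hand side and $\gamma_2:=-1-n+\frac{2n}{p}+\frac{p-2}{p}$ determined by the relation $\frac{2-p}{2}\gamma_1+\frac{p}{2}\gamma_2=\gamma$. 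This gives $T_3\le \ez_0 s^{\gamma_1}a^2+C(\ez_0)s^{\gamma_2}b^2$, and a comparison of exponents shows $s^{\gamma_2}b^2=\bigl(s/(s-1)\bigr)^{(p-2)/p}s^{-1}\sigma^{p-2}b^2\le s^{-1}\sigma^{p-2}b^2$ (using $s/(s-1)\ge 1$ and $(p-2)/p<0$), while $\sigma\ge \ez b$ forces $\sigma^{p-2}\le 2^{2-p}(\sigma+\ez b)^{p-2}$, finishing this regime.

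In the opposite regime $\sigma<\ez b$, Young's inequality is too wasteful, and I would instead exploit the constraint $\ez a\le \zeta\,s^{1-n/p}$ directly to eliminate $a$: $a^{2-p}\le \zeta^{2-p}s^{(1-n/p)(2-p)}\ez^{p-2}$, and a direct calculation uncovers the cancellation $\gamma+(1-n/p)(2-p)=-1$, which gives $T_3\le \zeta^{2-p}s^{-1}\ez^{p-2}b^p$. Since $\sigma<\ez b$ implies $\ez^{p-2}b^p=(\ez b)^{p-2}b^2\le 2^{2-p}(\sigma+\ez b)^{p-2}b^2$, this concludes the bound in this regime. Combining both regimes and choosing $\zeta$ small enough to absorb the $\zeta^p$ losses from $T_1,T_2$ into $\ez_0$ completes the proof of \eqref{inter}.
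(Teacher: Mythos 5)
Your proof is correct, and it takes a genuinely different route from the paper's. The paper establishes \eqref{inter} by splitting into the two cases $r\le 1$ and $r>1$ and, within each, running further subcases that compare $\ez b$ with $r^{\frac{1}{p-1}}$ (resp.\ $r^{\frac{1-n}{p-1}}$) and $b$ with $a$ (resp.\ $a r^{-1}$), each subcase ending either in absorption into the $\ez_0 a^2$ term or in an application of the constraint $\ez a\le \zeta\bigl(1+r^{\frac p{p-1}}\bigr)^{1-\frac np}$. You instead work uniformly in $s=1+r^{\frac p{p-1}}$: the clean observation $T_2\le \zeta^p T_3$ (factoring $\ez^p a^2=(\ez a)^p a^{2-p}$ and using the constraint) collapses the middle term into the third, and the remaining term $s^{\gamma}a^{2-p}b^p$ is handled by the single dichotomy $\sigma\ge \ez b$ versus $\sigma<\ez b$, with a weighted Young inequality with exponents $\tfrac{2}{2-p},\tfrac2p$ in the first regime and constraint-based elimination of $a$ in the second; your exponent identities $\gamma-p+1=\gamma_1$, $\tfrac{2-p}{2}\gamma_1+\tfrac p2\gamma_2=\gamma$ with $s^{\gamma_2}\le s^{-1}\sigma^{p-2}$, and $\gamma+\bigl(1-\tfrac np\bigr)(2-p)=-1$ all check out, as does the preliminary comparison $(1+r)^{-\frac p{p-1}}\ge 2^{-\frac1{p-1}}s^{-1}$. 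This buys a shorter, essentially case-free argument whose only input is exponent algebra, replacing the paper's comparisons of $b$ against $a$ by a single Young step; the paper's version is more pedestrian but makes explicit which mechanism operates in which regime of $r$. Two cosmetic points: at $s=1$ (i.e.\ $r=0$) the identity $s^{\gamma_2}b^2=\bigl(s/(s-1)\bigr)^{\frac{p-2}{p}}s^{-1}\sigma^{p-2}b^2$ is formally degenerate, but in the regime $\sigma\ge \ez b$ this forces $b=0$ (since $\ez>0$), so nothing is lost; and the final bookkeeping should be done with $\ez_0/3$ in the Young step and $\zeta^p\le \ez_0/3$, exactly as you indicate and as the paper also does.
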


\begin{proof}
Note that \eqref{young} immediately follows from \eqref{inter}, so it suffices to prove \eqref{inter}.
We distinguish several cases.

	\smallskip

\noindent $\bullet$ {\it Case 1:} $0 \leq r \leq 1$.
	In this case,  up to changing the values of $\ez_0$ and $\zeta$ by a universal constant, \eqref{inter} is  equivalent to 
	\begin{equation}
	\label{young r small}
	a^2 \zeta^p r^{\frac p {p-1}}+a^2\ez^p b^p + a^{2-p} b^p\le \ez_0 a^2 + C(\ez_0,n,p)\bigl(r^{\frac 1 {p-1}}+\ez b\bigr)^{p-2}b^2.
	\end{equation}
	 Note that:\\
	- if 
	$\ez b \le \left(\frac{\ez_0}{3}\right)^{\frac 1 p} r^{\frac 1 {p-1}}$
	then $a^2\ez^p b^p\le \frac {\ez_0} 3 a^2$;\\
- if 
	$\ez b > \left(\frac{\ez_0}{3}\right)^{\frac 1 p} r^{\frac 1 {p-1}}$
	then, since $\ez a\le \zeta \bigl(1+r^{\frac p {p-1}}\bigr)^{1-\frac n p}\leq 2\zeta$,  
	$$a^2\ez^p b^p\le 4\zeta^2 \ez^{p-2} b^p\le C(\ez_0,n,p)\bigl(r^{\frac 1 {p-1}}+\ez b\bigr)^{p-2}b^2.$$
	Similarly:\\
	- if 
	$b\le \left(\frac {\ez_0} 3\right)^{\frac 1 p} a$
	then $a^{2-p}b^p\le \frac {\ez_0} 3 a^2$;\\
	- if $ \left(\frac {\ez_0} 3\right)^{\frac 1 p} a<b<\ez^{-1} r^{\frac 1 {p-1}}$ then
	$$a^{2-p}b^p\le C(\ez_0,n,p) b^2\le C(\ez_0,n,p) r^{\frac {p-2}{p-1}}b^2\le C(\ez_0,n,p)\bigl(r^{\frac 1 {p-1}}+\ez b\bigr)^{p-2}b^2; $$
- if $b\ge \ez^{-1} r^{\frac 1 {p-1}}$ then, since $\ez a\le \zeta \bigl(1+r^{\frac p {p-1}}\bigr)^{1-\frac n p}\le 2\zeta$, 
	$$a^{2-p}b^p\le 4^{2-p} \zeta^{2-p} \ez^{p-2} b^p\le C(\ez_0,n,p)\bigl(r^{\frac 1 {p-1}}+\ez b\bigr)^{p-2}b^2.$$
	Thus, choosing $\zeta^p\le \frac {\ez_0}{3}$,  \eqref{young r small} holds in all cases.

	\smallskip
	
	\noindent $\bullet$ {\it Case 2:} $ r> 1$.
	In this case, \eqref{inter} is equivalent to
	\begin{multline}\label{young r large}
	r^{\frac{p-n}{p-1}(p^*-2)}a^2\zeta^p+ r^{\frac{p-n}{p-1}(p^*-2-p)+p}a^p \ez^p b^p+a^{2-p}b^pr^{\frac{p-n}{p-1}(p^*-2)+p}\\
	\le \ez_0 r^{\frac{p-n}{p-1}(p^*-2)}a^2 + C(\ez_0,n,p)r^{-\frac p {p-1}}\bigl(r^{\frac {1-n} {p-1}}+\ez b\bigr)^{p-2}b^2. 
	\end{multline}
	Again:\\
	- if $b\le \left(\frac {\ez_0} 3\right)^{\frac 1 p} r^{\frac{1-n}{p-1}}\ez^{-1}$ then
	$$r^{\frac{p-n}{p-1}(p^*-2-p)+p}a^2 \ez^p b^p\le \frac {\ez_0} 3 r^{\frac{p-n}{p-1}(p^*-2)}a^2;$$
	-if $b> \left(\frac {\ez_0} 3\right)^{\frac 1 p} r^{\frac{1-n}{p-1}}\ez^{-1}$, we apply the inequality
	$\ez a\le \zeta \bigl(1+r^{\frac p {p-1}}\bigr)^{1-\frac n p}\leq 2\zeta r^{\frac {p-n} {p-1}}$ to conclude
	$$r^{\frac{p-n}{p-1}(p^*-2-p)+p}a^2 \ez^p b^p\le 4 r^{-\frac{p }{p-1}}  \zeta^2 \ez^{p-2}  b^p\le C(\ez_0,n,p)r^{-\frac{p }{p-1}}\bigl(r^{\frac {1-n} {p-1}}+\ez b\bigr)^{p-2}b^2.$$
	On the other hand:\\
	- if $b\le \left(\frac {\ez_0} 3\right)^{\frac 1 p} a r^{-1}$ then
	$$a^{2-p}b^pr^{\frac{p-n}{p-1}(p^*-2)+p}\le \frac{\ez_0} 3 r^{\frac{p-n}{p-1}(p^*-2)}a^2;$$
   - if $\left(\frac {\ez_0} 3\right)^{\frac 1 p} a r^{-1} <b< \ez^{-1} r^{\frac {1-n} {p-1}}$  then
   	\begin{multline*}
	a^{2-p}b^p r^{\frac{p-n}{p-1}(p^*-2)+p}\le C(\ez_0,n,p) b^2 r^{\frac{p-n}{p-1}(p^*-2)+2}\\
	= C(\ez_0,n,p) r^{-\frac p {p-1}}r^{\frac {1-n} {p-1}(p-2)}  b^2 \le C(\ez_0,n,p)r^{-\frac p {p-1}}\bigl(r^{\frac {1-n} {p-1}}+\ez b\bigr)^{p-2}b^2;
	\end{multline*}
- if $b\ge \ez^{-1} r^{\frac {1-n} {p-1}}$ then we apply the inequality
	$\ez a\le \zeta \bigl(1+r^{\frac p {p-1}}\bigr)^{1-\frac n p}\le 2\zeta r^{\frac {p-n} {p-1}}$ to get
	$$a^{2-p}b^p r^{\frac{p-n}{p-1}(p^*-2)+p}\le  2^{2-p}r^{-\frac{p }{p-1}}  \zeta^{2-p} \ez^{p-2}b^p \le  C(\ez_0,n,p) r^{-\frac{p }{p-1}}\bigl(r^{\frac {1-n} {p-1}}+\ez b\bigr)^{p-2}b^2.$$
This proves \eqref{young r large} whenever $\zeta^p\le \frac {\ez_0}{3}$, concluding the proof of \eqref{inter}.
\end{proof}

\end{document}